\newtheorem{thm}{Theorem}[section]
\newtheorem{crl}[thm]{Corollary}
\newtheorem{prp}[thm]{Proposition}
\newtheorem{lmm}[thm]{Lemma}
\newtheorem{conj}[thm]{Conjecture}
\newtheorem{rmk}[thm]{Remark}
\newcommand {\mb}{\mathbb}
\newcommand {\Z}{\mb Z}
\newcommand {\colim}{\textrm{colim}\ }
\newcommand {\ex}{\mathrm{excess}}
\newcommand {\ext}{\mathrm{Ext}}
\newcommand {\lra}{\longrightarrow}
\newcommand {\la}{\langle}
\newcommand {\ra}{\rangle}
\begin{document}

\title{Spherical classes in some finite loop spaces of spheres}
%finite loop spaces of spheres and unstable bordism groups of immersions}
 %and stable bordism of unstable bordism elements}

\author{Hadi Zare\\
        School of Mathematics, Statistics,
        and Computer Sciences\\
        University of Tehran, Tehran, Iran\\
        \textit{email:hadi.zare} at \textit{ut.ac.ir}}
\date{}

\maketitle

\begin{abstract}
Working at the prime $2$, Curtis conjecture predicts that, in positive dimensions, spherical classes in $H_*QS^0$ only arise from Hopf invariant one and Kervaire invariant one elements. Eccles conjecture states that, in positive dimensional, for a path connected space $X$, a class in $H_nQX$ is spherical if its either stably spherical or it arises from a stable map $S^n\to X$ which is detected by a primary operation in its mapping cone.\\
(i) We use Hopf invariant one result to verify Eccles conjecture on some finite loop spaces of spheres, namely, for $X=S^k$ , we completely determine spherical classes in $H_*(\Omega^dS^{k+d};\Z/2)$ for specific values of $d$, $k$ showing that a spherical classes in these cases only do arise from Hopf invariant one elements.\\
(ii) We completely determine spherical classes in homology of single, double, and triple loop spaces of spheres, namely $\Omega S^{n+1}$, $\Omega^2 S^{n+2}$, and $\Omega^3S^{n+3}$ with $n>0$.\\
These computations, verify Eccles conjecture on the finite loop spaces that we have considered. We also record some observations on the relation between two conjectures. The latter conjecture for $X=S^k$, with $k>0$, provides some evidence for the former to be true. \\
\end{abstract}

\textbf{AMS subject classification:$55Q45,55P42$}

\tableofcontents

\section{Introduction}
For a pointed space $X$, let $QX=\colim\Omega^i\Sigma^iX$ be the infinite loop space associated to $\Sigma^\infty X$. Throughout the paper, we shall work at the prime $2$, the homology will be $\Z/2$-homology. We may state Curtis conjecture as follows.

\begin{conj}(\cite[Theorem 7.1]{Curtis})\label{Curtisconj}
For $n>0$, only Hopf invariant one and Kervaire invariant one elements map nontrivially under unstable Hurewicz homomorphism
$${_2\pi_n^s}\simeq{_2\pi_n}QS^0\to H_*QS^0.$$
\end{conj}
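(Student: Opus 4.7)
Curtis's conjecture is a well-known open problem, so what follows is a plan one might attempt rather than a complete argument; a full proof is out of reach of current methods, which is precisely why the present paper verifies only pieces of it via the closely related Eccles conjecture. The plan is to exploit the Dyer--Lashof/Kudo--Araki structure on $H_*QS^0$ together with the Adams spectral sequence. Recall that $H_*QS^0$ splits by components and the identity component has mod $2$ homology a polynomial algebra on generators of the form $Q^I\iota*[1-2^{\ell(I)}]$, where $\iota=[1]$ and $I=(i_1,\ldots,i_s)$ is admissible with $\ex(I)>0$. Since the Hurewicz image lies in the primitives of a connected Hopf algebra, the first step is to parametrize primitives and to argue that a spherical class must be represented by a single admissible monomial of some length $\ell(I)=s$.

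Next, I would bring in the Lannes--Zarati machinery. For each $s\geq 1$ there is a detection homomorphism $\varphi_s\colon\ext_{\A}^{s,s+n}(\Z/2,\Z/2)\to (\Z/2\otimes_{\A} R_s)_n^*$ which, on the Adams $E_\infty$-page, controls the passage from an $s$-line permanent cycle to a length-$s$ Dyer--Lashof monomial in the Hurewicz image of $\pi_n^sS^0\to H_nQS^0$. Curtis's conjecture is therefore implied by two statements: $\varphi_s$ vanishes on permanent cycles for every $s\geq 3$, and for $s=1,2$ its image consists precisely of the Hopf invariant one classes $h_j$ $(0\leq j\leq 3)$ and the Kervaire invariant one classes $h_j^2$. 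The $s=1$ statement is Adams's Hopf invariant one theorem, while the $s=2$ statement is reduced by the Hill--Hopkins--Ravenel resolution of the Kervaire invariant problem to the six surviving $\theta_j$.

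The main obstacle, and the reason Curtis's conjecture remains open, is the $s\geq 3$ case: one needs a uniform vanishing theorem for the Lannes--Zarati homomorphism on permanent cycles of length at least three. Partial vanishing results of Hung, Powell, and others handle specific bidegrees, but a general argument appears to require either a much sharper understanding of $\varphi_s$ on the image of the Singer transfer or a fundamentally new filtration of $\pi_*^s$ compatible with Dyer--Lashof length. The strategy the present paper pursues sidesteps this difficulty: by working inside a finite loop space $\Omega^dS^{k+d}$, the algebraic complexity of the relevant Hopf algebra is truncated so that only finitely many Dyer--Lashof monomials survive, at which point the Hopf invariant one input alone pins down the sphericity question and gives evidence for the full Curtis conjecture.
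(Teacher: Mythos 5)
You have correctly identified the essential point: the statement is labelled a \emph{conjecture} in the paper (with a citation to Curtis's original formulation), and the paper offers no proof of it --- its actual results only verify consequences of this and the Eccles conjecture for certain finite loop spaces $\Omega^dS^{k+d}$. So there is no proof in the paper to compare yours against, and your submission honestly does not claim to close the problem. What you sketch is the standard algebraic line of attack: reduce to primitives in the Hopf algebra $H_*Q_0S^0$, organize spherical classes by Dyer--Lashof length, and use the Lannes--Zarati homomorphisms $\varphi_s\colon\ext_{\A}^{s,s+n}(\Z/2,\Z/2)\to(\Z/2\otimes_{\A}R_s)_n^*$ as a graded associated version of the Hurewicz map, so that Curtis's conjecture follows from vanishing of $\varphi_s$ on permanent cycles for $s\geqslant 3$ together with Adams's Hopf invariant one theorem ($s=1$) and the Kervaire invariant analysis ($s=2$). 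That is an accurate description of H\`ung's program, and you rightly flag that the uniform $s\geqslant 3$ vanishing is exactly what is missing; this is a genuine, named gap, not an oversight in your write-up. Two small cautions: a primitive Hurewicz image need not a priori be a single admissible monomial (it can be a sum, and squares of primitives also occur, as with the Kervaire classes $p_{2^i-1}^2$ cited in the paper), so the ``single monomial of length $s$'' reduction needs the length filtration argument rather than being immediate; and the $s=2$ input needed here is not just the nonexistence of $\theta_j$ for large $j$ but the identification of which $2$-line permanent cycles have nonzero unstable Hurewicz image, which the paper attributes to \cite{Z-decomposables}. As a review verdict: your proposal is a correct assessment of the status of the statement and of the paper's relationship to it, but it is a strategy outline for an open problem, not a proof, and should not be presented as one.
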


Here and throughout, we write ${_2\pi_*^s}$ and ${_2\pi_*}$ for the $2$-component of $\pi_*^s$ and $\pi_*$ respectively. We also write $H_*$ for $H_*(-;\Z/2)$. A variant of this conjecture is due to Eccles which may be stated as follows.

\begin{conj}\label{Ecclesconj}
(\textrm{Eccles conjecture}) Let $X$ be a path connected $CW$-complex with finitely generated homology. For $n>0$, suppose $h(f)\neq 0$ where ${_2\pi_n^s}X\simeq{_2\pi_n}QX\to H_*QX$ is the unstable Hurewicz homomorphism. Then, the stable adjoint of $f$ either is detected by homology or is detected by a primary operation in its mapping cone.
\end{conj}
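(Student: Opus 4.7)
The plan is to exploit the Snaith splitting of $\Sigma^\infty QX$ to separate the Hurewicz image of $f$ into homogeneous filtration pieces, and then to convert each piece, via the correspondence between Dyer-Lashof operations on $H_*QX$ and Steenrod operations in a mapping cone, into detection data for the stable adjoint.

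Let $g=\tilde{f}\colon S^n\to X$ denote the stable adjoint of $f\colon S^n\to QX$. The Snaith splitting provides a stable equivalence $\Sigma^\infty QX\simeq\bigvee_{j\geq 1}\Sigma^\infty D_j X$ onto the extended powers, and hence a decomposition $h(f)=\sum_j h_j(f)$ with $h_j(f)\in H_*D_j X$. If the bottom-filtration piece $h_1(f)\in H_*X$ is nonzero, then $g$ is stably detected by homology and we are done. So I would assume $h_1(f)=0$ and let $j_0\geq 2$ be minimal with $h_{j_0}(f)\neq 0$.

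The first step would be to pin down the shape of $h_{j_0}(f)$. Spherical classes in $H_*QX$ are coalgebra primitive, and the Dyer-Lashof basis of $H_*D_j X$ has a classical description of its primitives; up to coproduct terms living in strictly lower Snaith filtration, the primitives mapping nontrivially into $H_*QX$ are the iterated Dyer-Lashof monomials $Q^I x$ with $x\in H_*X$, $I=(i_1,\ldots,i_s)$ admissible of length $s$ satisfying $2^s\mid j_0$, and $|x|+|I|=n$. Minimality plus primitivity should thus force $h_{j_0}(f)$ to be a sum $\sum_\alpha Q^{I_\alpha}x_\alpha$ of such single-variable monomials. Using the $j_0$-th James-Hopf map $QX\to QD_{j_0}X$ to naturally isolate this piece at the level of homotopy, I would then reduce to the case of a single summand $Q^I x$.

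Given the identification $h_{j_0}(f)=Q^I x$, the second step is to show that $g$ is detected in the mapping cone $C_g=X\cup_g D^{n+1}$ by the primary operation $\theta=Sq^{i_1}Sq^{i_2}\cdots Sq^{i_s}\in\mathcal{A}$ acting on the dual class $x^*\in H^{|x|}(X)\hookrightarrow H^{|x|}(C_g)$, with image the generator of $H^{n+1}(C_g)$. This I expect to follow from a direct Kudo-transgression computation: the inclusion $X\hookrightarrow QX$, the evaluation $\Sigma^n QX\to Q\Sigma^n X$, and the identification of Dyer-Lashof squares with Steenrod squares across adjunction translate the Hurewicz datum $Q^I x$ into the cohomology datum $Sq^I x^*\neq 0$ on the two-cell complex $C_g$, which is exactly the detection statement.

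The main obstacle is the first step. Coalgebra primitivity alone does not eliminate genuinely multi-variable monomials $Q^I(x_1\otimes\cdots\otimes x_r)\in H_*D_{j_0}X$, nor does it eliminate sums whose summands might not each be realized by the same stable adjoint after applying the James-Hopf map. For $X=S^0$ this is precisely the content of the still-open Curtis conjecture. For path-connected $X$ I would attempt to reduce to the sphere case by a cellular filtration argument on $X$ combined with an Adams spectral sequence comparison, importing the results on $X=S^k$ from part (i) of the paper as the base case. This reduction from arbitrary path-connected $X$ to $X=S^k$ is the technical heart of the problem and is where I expect essentially all of the difficulty to lie; the formal framework above is only useful once that reduction is in hand.
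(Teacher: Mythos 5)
There is a fundamental mismatch here: the statement you are proving is labelled a \emph{conjecture} in the paper, and the paper offers no proof of it --- it only \emph{verifies} instances of it for specific finite loop spaces of spheres. Indeed, by the paper's own lemma in the final section, Eccles' conjecture for $X=P$ implies the Curtis conjecture (via the Kahn--Priddy theorem and Lin's algebraic version), so a complete proof of the statement as you have set it up would settle a long-standing open problem. Your proposal does not do this, and to your credit you say so explicitly in the last paragraph: the reduction from arbitrary path-connected $X$ to the sphere case, which you correctly identify as ``the technical heart of the problem,'' is precisely the part that is missing. A proof sketch whose central step is declared open is not a proof; what you have written is a plausible strategy outline, not an argument.

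Beyond the acknowledged gap, the second step also overreaches. The result you are implicitly invoking (Boardman--Steer, and Asadi-Golmankhaneh--Eccles as cited in the paper) converts a Hurewicz image of the form $x^2$ --- that is, a length-one Dyer--Lashof monomial $Q^{\dim x}x$ --- into detection of the stable adjoint by the single Steenrod square $Sq^{\dim x+1}$ in the mapping cone. There is no analogous statement converting a longer admissible monomial $Q^{I}x$ with $\ell(I)=s\geqslant 2$ into detection by the composite $Sq^{i_1}\cdots Sq^{i_s}$; such classes correspond, in the lambda-algebra/Adams filtration picture, to detection by operations of order $s$, not primary ones. This is exactly why the conjecture is nontrivial: one must rule out spherical classes whose leading Snaith-filtration piece has length $\geqslant 2$ (beyond the Kervaire-type squares), and that is the content of the Curtis-type statements the paper can only establish in low-dimensional or highly structured cases. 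If you want a tractable project, restrict to $X=S^k$ and the specific loop spaces treated in the paper, where the Hopf invariant one theorem, the James--Hopf maps, and Nishida relations suffice to eliminate the long monomials case by case.
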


Note that the stable adjoint of $f$ being detected by homology means that $h(f)\in H_*QX$ is stably spherical, i.e. it survives under homology suspension $H_*QX\to H_*X$ induced by $\Sigma^\infty QX\to \Sigma^\infty X$ given by the adjoint of the identity $QX\to QX$.

The above conjectures make predictions about the image of spherical classes in the image of unstable Hurewicz homomorphism $h:{_2\pi_*^s}X\simeq{_2\pi_*}QX\to H_*QX$. By Freduenthal supension theorem, for any $f\in\pi_nQX$, depending on the connectivity of $X$, we may find some nonnegative integer $i$ so that $f$ does pull back to $\pi_n\Omega^i\Sigma^iX$. So, it is natural to ask about the image of the Hurewicz homomorphism $\pi_n\Omega^i\Sigma^iX\to H_n\Omega^i\Sigma^iX$. Note that there exist obvious commutative diagrams
$$\xymatrix{
\pi_n\Omega^i\Sigma^iX\ar[r]^-h\ar[d] & H_n\Omega^i\Sigma^iX\ar[d]\\
\pi_nQX\ar[r]^-h                      & H_nQX.}$$
It is then natural to look for spherical classes in $H_*\Omega^i\Sigma^iX$ for all $i$ in order to give some answer to the above conjectures. As a possible strategy to answer Curtis conjecture, one may try to show that for any $f\in{_2\pi_n}QS^0$ other than Hopf or Kervaire invariant one elements, its pull back to ${_2\pi_n}\Omega^i\Sigma^iS^0$ maps trivially under the Hurewicz homomorphism. Similar approach may be taken to provide an affirmative answer to Eccles conjecture.

\section{Statement of results}
We use Hopf invariant one result, interpreted in terms of James-Hopf invariants, to determine spherical classes in $H_*\Omega^dS^{d+k}$ for specific values of $d,k$. For various values of $d$ and $k$, the spaces $\Omega^dS^{d+k}$ determine a lattice with positive integer coordinates, and it is easier to state our results in terms of this lattice.
\begin{thm}\label{main}
Consider the following lattice where $(a,b)$ corresponds to $H_*\Omega^aS^b$.
$$\xymatrix{
(1,15)\ar@{.}[ddd]\ar@{.>}[ddddrrrrrr]  \\
\\
\\
(1,9)\ar@{.}[d]\\
(1,8)\ar@{.>}[r]\ar@{.}[d]\ar@{.>}[rd] & (2,9)\ar@{.>}[rrrrr] & & & & & (8,15)\\
(1,7)\ar@{.>}[dddrrrr]\ar@{.}[ddd]     & (2,8)\ar@{.>}[rrrrddd]\\
\\
\\
(1,4)\ar@{.>}[r]\ar@{.}[d]\ar@{.>}[ddrr]          & (2,5)\ar@{.>}[rrr]&&& (4,7)\ar@{.>}[r] & (5,8)\\
(1,3)\ar@{.>}[rd]\ar@{.}[d]\\
(1,2)\ar@{.>}[r]\ar@{.}[d]& (2,3)\ar@{.>}[r] & (3,4)\\
(1,1)\ar@{.>}[r]          & (2,2)
}$$
The following table completely determines the spherical classes in the above lattice.
\begin{center}
\begin{tabular}{|c|c|c|c|c|c|c|c}
\hline
$b-a$ & $a$             & $b$              & \textrm{spherical classes arise from}\\
\hline
$0$   & $2$             & $2$              & $\eta,\eta^2$\\
\hline
$1$   & $2$             & $3$              & $\eta,b$\\
\hline
$1$   & $1$             & $2$              & $\eta,b$\\
\hline
$1$   & $3$             & $4$              & $\nu,\eta,b$\\
\hline
$2$   & $1$             & $3$              & $b$\\
\hline
$3$   & $\leqslant 4$   & $a+3$            & $\nu,b$\\
\hline
$3$   & $5$             & $8$              & $\sigma,\nu,b$\\
\hline
$7-a$ & $<4$            & $7$              & $b$\\
\hline
$7$   & $\leqslant 8$   & $a+7$            & $\sigma,b$\\
\hline
$15-a$& $<8$            & $15$            & $b$\\
\hline
$4-a$ & $<3$            & $4$              & $\nu,b$\\
\hline
$8-a$ & $<5$            & $8$              & $\sigma,b$\\
\hline
\textrm{other cases}    &    &                   & $b$\\
\hline
\end{tabular}
\end{center}
Here, $b$ corresponds to the inclusion of the bottom cell in the related loop space, that is $b:S^k\to \Omega^dS^{d+k}$. The diagonal arrows correspond to (iterated) adjointing down, and horizontal arrows correspond to (iterated) suspension map $E$. Both of this operations, preserve spherical classes.
\end{thm}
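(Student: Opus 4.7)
The plan is to use the James--Hopf invariants to reduce the computation of spherical classes in each $H_* \Omega^d S^{d+k}$ to Adams' Hopf invariant one theorem. Since any spherical class is primitive under the Pontryagin coalgebra structure, the first step is to list the primitives in each relevant loop space. For the single loop case $\Omega S^{k+1}$, the homology is the polynomial algebra $\F_2[x_k]$ on a single generator of degree $k$, whose primitives are the squares $x_k^{2^i}$. For $\Omega^d S^{d+k}$ with $d \geq 2$, the Cohen--Lada--May presentation gives a polynomial algebra on admissible Dyer--Lashof monomials $Q^I \iota_k$ applied to the fundamental class $\iota_k$; the primitives are then $\iota_k$ together with specific Dyer--Lashof classes that can be enumerated degree by degree.

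For each primitive $y$ above the bottom cell, suppose $y = h(f)$ for some $f\colon S^n \to \Omega^d S^{d+k}$. I would apply an appropriate James--Hopf invariant $j_r\colon \Omega^d S^{d+k} \to \Omega^d S^{d+rk}$ and compute the Hurewicz image of $j_r \circ f$ using the known action of $j_r$ on Dyer--Lashof classes. The idea is that $j_r$ detects the chosen primitive $y$, so that $j_r \circ f$ is still nonzero in homology, but now lands in a loop space whose bottom cell has higher dimension and whose primitives are simpler. Iterating, one eventually reduces to the case $y = x^2$ in some $\Omega S^{m+1}$, where $y$ being spherical forces the existence of a Hopf invariant one element in $\pi_{2m+1}S^{m+1}$. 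Adams' theorem then forces $m \in \{1,3,7\}$, corresponding to $\eta$, $\nu$, $\sigma$, and tracing backwards through the reductions identifies precisely which primitives above the bottom cell survive as spherical classes.

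The final step is to organize the case analysis along the lattice. The horizontal arrows (the suspension maps $E$) and the diagonal arrows (iterated adjointing) preserve Hurewicz images, so spherical-ness propagates along these arrows; this reduces the full verification to a handful of corner cases. These corner cases, together with the explicit appearances of the Hopf invariant one elements $\eta \in \pi_3 S^2$, $\nu \in \pi_7 S^4$, $\sigma \in \pi_{15} S^8$ and their loop adjoints, are then matched line by line against the entries of the table.

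The main obstacle will be the second step: one must carefully track how the James--Hopf invariants act on Dyer--Lashof classes to guarantee that the reductions neither lose the chosen primitive nor produce spurious new primitives whose spherical-ness is not yet controlled. A secondary difficulty is handling higher squares $x^{2^i}$ with $i \geq 2$ in the single loop case, where one must argue via iterated application of $j_2$ that a putative spherical class would force Hopf invariant one in a dimension forbidden by Adams rather than in one of the allowed dimensions $2, 4, 8$; this requires a careful dimension count coordinated with the inductive step.
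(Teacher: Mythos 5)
Your proposal takes a genuinely different route from the paper's, and it contains a gap that breaks the main reduction. The paper does not prove Theorem \ref{main} by enumerating primitives and chasing them through James--Hopf invariants. Instead it composes a putative spherical map $f:S^n\to\Omega^dS^{d+k}$ with (loops on) the Hopf fibrations: the equivalence $\Omega^2\eta:\Omega^2S^3\to\Omega^2_0S^2$ and the splittings $\Omega S^4\simeq S^3\times\Omega S^7$, $\Omega S^8\simeq S^7\times\Omega S^{15}$ (coming from $H(\nu)=1$, $H(\sigma)=1$) produce a composite landing in $Q_0S^0$ whose stable class is the product $\eta\cdot E^\infty f$ (resp.\ $\nu\cdot E^\infty f$, $\sigma\cdot E^\infty f$) in $\pi_*^s$. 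The previously established Theorem \ref{decomp} on Hurewicz images of decomposables then forces $E^\infty f$ to lie in the same degree as $\eta$, $\nu$ or $\sigma$, and the known finite groups $\pi_4S^3\simeq\Z/2$, $\pi_{10}S^7\simeq\Z/8$, etc.\ identify $f$ on the nose. The rest of the lattice is filled in exactly as you say, by suspension (a homology monomorphism) and by adjointing down (decomposables die under homology suspension). So the ``Hopf invariant one'' input enters through Theorem \ref{decomp}, not through a direct James--Hopf detection argument.

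The concrete gap in your plan is the claim that one can apply James--Hopf invariants $j_r:\Omega^dS^{d+k}\to\Omega^dS^{d+rk}$ for $d\geqslant 2$. No such unstable maps with spherical targets exist: by the Snaith splitting the $r$-th James--Hopf invariant of a $d$-fold loop space is a map $j_r^d:\Omega^d\Sigma^dX\to QD_r(X,d)$ with $D_r(X,d)=E\Sigma_r\times_{\Sigma_r}X^{\wedge r}$, and already for $r=2$, $X=S^k$ this target is a (suspended) stunted projective space, not a sphere. Only for $d=1$ does $X^{\wedge r}=S^{rk}$ give $H_r:\Omega S^{k+1}\to\Omega S^{rk+1}$. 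Consequently your inductive reduction ``iterate $j_r$ until the primitive becomes $x^2$ in some $\Omega S^{m+1}$'' cannot be carried out for the higher loop spaces that constitute most of the lattice, and the non-square primitives $Q^I\iota_k$ (e.g.\ $Q^{k+1}\iota_k$ in $\Omega^2S^{k+2}$) are not touched by any Hopf-invariant-one argument of this shape; eliminating them requires either Nishida-relation computations in $QD_2$ (as the paper does in its later sections for large $n$) or the paper's detour through $\pi_*^s$ and Theorem \ref{decomp}. Your plan for the single loop spaces and for the higher squares $x^{2^i}$ via iterated $H_2$ is sound, but it covers only the first column of the lattice.
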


%Our computations determine all spherical classes in homology of the spaces on the edges as well as inside the triangles. It also determines spherical classes on the edges of the bottom square. However, it does not completely determine spherical classes left outside the triangles or the square such as $\Omega^3S^3$.

The stablisation map $\Omega^dS^{d+k}\to \Omega^{d+l}S^{d+k+l}$ induces a monomorphism in homology, hence the spherical classes computed in the above theorem survive under this map. However, having determined spherical classes in $H_*\Omega^dS^{d+k}$ does not completely determine spherical classes in $H_*\Omega^{d+l}S^{d+k+l}$; for some more discussions see Corollary \ref{Curtis-1}.\\

Next, using more homological tools coupled with James and James-Hopf maps, we completely determine spherical classes in homology of double loop spaces associated to spheres. Note that our previous result deals with spherical classes in homology of $\Omega S^{n+1}$ and $\Omega^2S^{n+2}$ for $n\leqslant 14$ and $n\leqslant 12$ respectively. Moreover, for $n\leqslant 10$, our results above determine all spherical classes in $H_*\Omega^3S^{n+3}$ expect for $\Omega^3S^5$ and $\Omega^3S^9$. The following theorem, deals with the other cases (some cases do overlap with the above calculations).

\begin{thm}
(i) For $n\geqslant 8$, the only spherical classes in $H_*\Omega S^{n+1}$ arise from the inclusion of the bottom cell $S^n\to\Omega S^{n+1}$.\\
(ii) For $n\geqslant 7$, the only spherical classes in $H_*\Omega^2 S^{n+2}$ arise from the inclusion of the bottom cell $S^n\to\Omega^2 S^{n+2}$.\\
(iii) For $n\geqslant 6$, the only spherical classes in $H_*\Omega^3 S^{n+3}$ arise from the inclusion of the bottom cell $S^n\to\Omega^3 S^{n+3}$. \\
(iv) The only spherical classes in $H_*\Omega^3S^5$ arise in dimensions $2$ and $5$ corresponding to the inclusion of the bottom cell and $\nu$ respectively.\\
(v) The only spherical classes in $H_*\Omega^3S^9$ arise in dimensions $6$ and $13$ corresponding to the inclusion of the bottom cell and $\sigma$ respectively.
\end{thm}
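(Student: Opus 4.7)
The overall strategy is to describe $H_*\Omega^dS^{n+d}$ explicitly (the James model for $d=1$ and the Kudo--Araki--Dyer--Lashof description for $d=2,3$), enumerate the short list of algebra generators of the relevant degree, and for each candidate class convert the existence of a spherical lift into a Hopf invariant one or Kervaire invariant one statement by means of a James--Hopf invariant or a Snaith type stable splitting. Once the reduction is made, Adams' theorem and Browder's theorem dispatch the candidates outside Adams' dimensions $\{2,4,8\}$ and the known Kervaire dimensions.

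For (i), the James model gives $H_*\Omega S^{n+1}=\Z/2[x_n]$, so a spherical class must be $x_n^i$ for some $i\geqslant 1$; the case $i=1$ is the bottom cell. If $i\geqslant 2$, apply the $i$-th James--Hopf map $H_i\colon\Omega S^{n+1}\to\Omega S^{in+1}$, which sends $x_n^i$ to the bottom class $x_{in}$ in mod-$2$ homology. Any lift $\alpha\in\pi_{in}\Omega S^{n+1}$ with $h(\alpha)=x_n^i$ then has $H_i(\alpha)\in\pi_{in}\Omega S^{in+1}\cong\Z$ equal to an odd multiple of the identity. For $i=2$ this is precisely the classical Hopf invariant one problem, and for $i\geqslant 3$ the same conclusion follows by combining iterations of $H_2$ with the stable Snaith decomposition of $\Sigma^\infty\Omega S^{n+1}$. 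In either case, for $n\geqslant 8$ one is forced to the existence of a classical Hopf invariant one element in some dimension outside $\{2,4,8\}$, which Adams rules out.

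For parts (ii) and (iii), use the polynomial description of $H_*\Omega^dS^{n+d}$ on admissible Dyer--Lashof monomials $Q^I\iota_n$ of excess $<n$ and height appropriate to $d$. A spherical class must be primitive; primitivity together with the Nishida relations confines the candidates to the bottom cell $\iota_n$ or to single iterated Dyer--Lashof classes $Q^s\iota_n$. The Snaith splitting $\Sigma^\infty\Omega^dS^{n+d}\simeq\bigvee_k\Sigma^\infty D_{k,d}S^n$ identifies each weight-$k$ primitive with a definite stable summand, so detecting such a primitive spherically reduces via the $k$-th James--Hopf invariant to producing an element in $\pi_*S^N$ (or in $\pi_*$ of a stunted projective space, via the Kahn--Priddy transfer) with nontrivial classical Hopf or Kervaire invariant. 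Adams' and Browder's theorems exclude these for $n\geqslant 7$ in (ii) and for $n\geqslant 6$ in (iii) away from $\Omega^3S^9$. The only survivors are the weight-$2$ classes $Q^3\iota_2\in H_5\Omega^3S^5$ and $Q^7\iota_6\in H_{13}\Omega^3S^9$, which are respectively hit by $\nu\in\pi_8S^5$ and $\sigma\in\pi_{16}S^9$, giving parts (iv) and (v).

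The main obstacle is the enumeration of primitive Dyer--Lashof classes in parts (ii) and (iii) together with the verification that higher-weight summands of the Snaith splitting contribute nothing. For weights $k\geqslant 3$ the reduction to a Hopf or Kervaire invariant statement must be routed through the Steenrod module structure of the summands $D_{k,d}S^n$ and Adams spectral sequence input, and one must assemble these uniformly in $n$. Matching the two surviving weight-$2$ classes with the correct unstable lifts of $\nu$ and $\sigma$ is a separate bookkeeping step that amounts to tracing the classical $H_2$ invariant through the adjunctions $\pi_{n+8}S^{n+1}\to\pi_{n+5}\Omega^3S^{n+3}$ for $n=2,6$.
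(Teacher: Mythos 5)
Your part (i) is essentially sound and in fact takes a somewhat different route from the paper for the higher powers: the paper eliminates $x_n^{2^t}$ with $t>1$ by stabilising, adjointing down once, and computing $Sq^1_*$ via the Nishida relations, whereas you propose iterated James--Hopf invariants; both can be made to work, and you should note that for $i$ not a power of $2$ the class $x_n^i$ is not even primitive, which shortcuts that case. The serious problems are in parts (ii)--(v). Your claim that ``primitivity together with the Nishida relations confines the candidates to the bottom cell or to single iterated Dyer--Lashof classes $Q^s\iota_n$'' is not correct: every admissible monomial $Q^I\iota_n$ is primitive, so primitivity alone eliminates nothing among monomials, and the candidates that actually survive the Nishida-relation analysis are the squares $(Q^{n+1}x_n)^2$ (for $\Omega^2S^{n+2}$) and additionally $(Q^{2n+3}Q^{n+2}x_n)^2$ (for $\Omega^3S^{n+3}$). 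These arise precisely because, by the previous case, $h(f)$ must die under homology suspension and hence be the square of an odd-dimensional primitive; your enumeration misses exactly the classes that carry all the difficulty.

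Second, the elimination step ``Adams' and Browder's theorems exclude these'' is not an argument. Browder's theorem does not exclude Kervaire invariant one elements --- they exist in dimensions $2,6,14,30,62$ --- so reducing a candidate to ``a Kervaire invariant statement'' does not by itself kill it; the paper never invokes Browder. What the paper actually does is unstable: for $(Q^{n+1}x_n)^2$ it pushes the class through two James--Hopf maps, using $D_2S^{n+1}\simeq\Sigma^{n+1}P_{n+1}$, to show that a map $S^{4n+5}\to S^{2n+3}$ would have to be detected by $Sq^{2n+3}$, which is impossible because $Sq^{2n+3}$ decomposes; for $(Q^{2n+3}Q^{n+2}x_n)^2$ it uses $Sq^4_*$ when $n$ is even, and when $n$ is odd a two-step argument (first forcing $n+2=2^t-1$ by a $Sq^{2^{\rho(n+2)+2}}_*$ computation after applying the stable James--Hopf map, then ruling that out by a $Sq^2_*$ computation on an adjoint and a binary-expansion analysis). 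None of this has a counterpart in your outline. Until you supply (a) the correct list of surviving primitives and (b) eliminations that do not rely on a blanket appeal to Adams and Browder, the proof of (ii)--(v) is incomplete; the identification of the surviving classes in (iv) and (v) with $\nu$ and $\sigma$ is correct but is the easy half of those statements.
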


\section{Recollection on homology}
The homology of iterated loop spaces of the form $\Omega^i\Sigma^i S^n$ and $QS^n$ with $n\geqslant 0$ is well known (see for example
\cite[Theorem 3]{KudoAraki},\cite[Theorem 7.1]{KudoAraki-Hn}, \cite[Page 86, Corollary 2]{DyerLashof}, \cite{CLM}). The following is the only fact that we need to know about homology.
\begin{prp}\label{homology-injec}
For $n\geqslant 0$ and $i>0$, the stablisation map $\Omega^i_0\Sigma^iS^n\to Q_0S^n$ induces a monomorphism in homology. Here, the subindex $0$ denotes the base point component of the related loop spaces.
\end{prp}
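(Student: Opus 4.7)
The plan is to reduce the statement to a direct comparison of the algebraic descriptions of source and target coming from the Kudo--Araki/Dyer--Lashof/Cohen--Lada--May computations cited just before the proposition. At the prime $2$, both $H_*\Omega^i\Sigma^iS^n$ and $H_*QS^n$ are polynomial $\F_2$-algebras on explicit generating sets of admissible iterated Dyer--Lashof monomials applied to the fundamental class, and the generating set for the source sits inside the generating set for the target.

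First I would dispose of the connected case $n\geqslant 1$. Freudenthal gives $\pi_0\Omega^i S^{n+i}=\pi_i S^{n+i}=0$, so $\Omega^i_0\Sigma^iS^n=\Omega^i\Sigma^iS^n$ and $Q_0S^n=QS^n$, and the component issue disappears. By the cited results, $H_*\Omega^i\Sigma^iS^n$ is the polynomial algebra on admissible monomials $Q^I\iota_n$ with $I=(s_1,\ldots,s_r)$ of positive excess, subject to the bound on $s_1$ that picks out exactly the Dyer--Lashof operations supported on an $i$-fold loop space; while $H_*QS^n$ is the polynomial algebra on all admissible $Q^I\iota_n$ of positive excess with no such bound. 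The stabilisation map is induced by the inclusion of operads $\mathcal{C}_i\hookrightarrow\mathcal{C}_\infty$, so in homology it acts on generators by $Q^I\iota_n\mapsto Q^I\iota_n$ whenever $Q^I$ is defined on both sides. Hence it is the inclusion of one polynomial algebra into another on a subset of generators, and in particular is injective.

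For $n=0$, both $\Omega^iS^i$ and $QS^0$ are disconnected with $\pi_0=\Z$. I would handle this by the standard translation trick: multiplication by the class $[-1]$ in the Pontryagin ring identifies the homology of the basepoint component with a polynomial subalgebra described by CLM in terms of appropriate translates of the generators $Q^I[1]$, and the stabilisation map respects this identification. Once again, the generating set for the source is a subset of that for the target, and the same generating-set-inclusion argument delivers injectivity.

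The only real obstacle is the $n=0$ case, where one must keep the component bookkeeping straight; beyond that, the proof is a formal consequence of the fact that the Dyer--Lashof operations supported on an $i$-fold loop space form a subset of those supported on an infinite loop space, acting by the same formula on the fundamental class.
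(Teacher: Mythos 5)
Your proposal is correct and follows essentially the same route as the paper: both arguments observe that the stabilisation map respects the iterated loop-space operations (the paper realises it as the $i$-fold looping of the bottom-cell inclusion $S^{i+n}\to QS^{i+n}$, you invoke the operad inclusion $\mathcal{C}_i\hookrightarrow\mathcal{C}_\infty$), so it carries the polynomial generators of $H_*\Omega^i\Sigma^iS^n$ identically onto a subset of the polynomial generators of $H_*QS^n$, and is therefore injective, with the $n=0$ case handled by the same translation to the basepoint component via Wellington's description. The only cosmetic difference is that you phrase the generating sets in upper-indexed admissible monomials (where the $i$-fold loop condition is slightly more delicate than a bound on $s_1$ alone) while the paper uses the lower-indexed $Q_j$ with $j<i$, for which the containment of generating sets is immediate.
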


Let's note that for $i=1$, and an arbitrary space $X$, $H_*\Omega\Sigma X\to H_*QX$ may not be a monomorphism in general, as $H_*\Omega\Sigma X$ being a tensor algebra is not necessarily commutative. However, when $X=S^n$, $\widetilde{H}_*X$ has only one generator, and a tensor algebra over a single generator is commutative. The statement of the proposition, is immediate once we consider the description of homology. First, we present a description in terms of lower indexed Kudo-Araki operations so that hopefully convince the reader about this.\\

For an $i$-fold loop space $X$, the operation $Q_j$ is defined for $0\leqslant j<i$ as an additive homomorphism
$$Q_j:H_*X\to H_{2*+j}X$$
so that $Q_0$ is the same as squaring with respect to the Pontrjagin product on $H_*X$ coming from the loop sum on $X$. The homology rings $H_*\Omega^i\Sigma^i S^n$ and $H_*QS^n$ when $n>0$, as algebras, can be described as
$$\begin{array}{lll}
H_*\Omega^i\Sigma^iS^n &\simeq &\Z/2[Q_{j_1}\cdots Q_{j_r}g_n:j_1\leqslant j_2\leqslant\cdots\leqslant j_r<i,j_1<n],\\
H_*QS^n                &\simeq &\Z/2[Q_{j_1}\cdots Q_{j_r}g_n:j_1\leqslant j_2\leqslant\cdots\leqslant j_r,j_1<n].
\end{array}$$
In this description, we allow the empty sequence $\phi$ as a nondecreasing sequence of nonnegative integers with $Q_\phi$ acting as the identity; this in fact realises the monomorphism $H_*S^n\to H_*\Omega^i\Sigma^iS^n$ given by the inclusion of the bottom cell $S^n\to\Omega^i\Sigma^iS^n$ being adjoint to the identity $S^{i+n}\to S^{i+n}$.
\begin{proof}[Proof of Proposition \ref{homology-injec} in the case of $n>0$]
Let $S^{i+n}\to QS^{i+n}$ be the inclusion of the bottom cell. By the infinite loop space structure of $QS^{i+n}$, it is also an $i$-fold loop map. By looping this map $i$-times we have the stablisation map $\Omega^i\Sigma^i S^n\to \Omega^iQS^{i+n}=QS^n$. In homology, the map obviously commutes with the operations coming from the $i$-loop structure, hence sending $Q_{j_1}\cdots Q_{j_r}g_n$ identically to itself. The maps is a map of algebras, so being injection on monomials, it induces an injection on the homology.
\end{proof}

For the case $n=0$, following Wellington \cite{Wellington} (see also the discussion \cite[Section 2]{Hunter} as well as \cite[Theorem 2.4]{Hunter}) we may describe $H_*\Omega^{i+1}_0S^{i+1}$ as follows. Let $M_iS^0$ be the free $\Z/2$-module generated by symbols $Q_J\iota_0$ with $J=(j_1,\ldots,j_r)$ a nonempty and nondecreasing sequence of nonnegative integers with $j_r<i+1$ so that $J\neq (0,0,\ldots,0)$. Write $[n]$ for the image of $n\in\pi_0\Omega^{i+1}S^{i+1}$ in $H_0\Omega^{i+1}S^{i+1}$ under the Hurewicz map. There is an embedding $M_iS^0\to H_*\Omega^{i+1}S^{i+1}$ which sends $Q_J\iota_0$ to $Q_J[1]*[-2^{l(J)}]$ where $l(J)=r$. We then have the following.
\begin{lmm}(Wellington)
For $i>0$, $H_*\Omega^{i+1}_0S^{i+1}$ is isomorphic to the free commutative algebra generated by $M_iS^0$ modulo the ideal generated by $\{Q_0x-x^2\}$.
\end{lmm}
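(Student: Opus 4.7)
The plan is to obtain this description by combining the approximation theorem for $(i+1)$-fold loop spaces with a translation argument that passes from the full loop space to its basepoint component. I would start from the May--Milgram group completion map $C_{i+1}S^0\to\Omega^{i+1}\Sigma^{i+1}S^0=\Omega^{i+1}S^{i+1}$, where $C_{i+1}S^0=\coprod_{n\geqslant 0}F(\R^{i+1},n)/\Sigma_n$ is the free $E_{i+1}$-algebra on a point. The mod $2$ homology of $C_{i+1}S^0$, as a commutative algebra under the concatenation product, is the Cohen--Lada--May calculation: it is the free commutative $\Z/2$-algebra on the classes $Q_J[1]$ for nondecreasing sequences $J=(j_1\leqslant\cdots\leqslant j_r)$ with $j_r<i+1$, modulo the Frobenius relation $Q_0x=x^2$. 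Here $[1]\in H_0$ is the class of the component indexed by $1\in\pi_0$, and $Q_J[1]$ lies in the component indexed by $2^{l(J)}$.

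Next I would invoke the group completion theorem, which says that $H_*\Omega^{i+1}S^{i+1}$ is obtained from $H_*C_{i+1}S^0$ by inverting the multiplicative action of $[1]$. Concretely, since the $H$-space structure on $\Omega^{i+1}S^{i+1}$ makes each component a translate of the basepoint component, this localisation is realised by adjoining the inverses $[-n]$. Translating the generator $Q_J[1]$, which sits in the component indexed by $2^{l(J)}$, back to the basepoint component via multiplication by $[-2^{l(J)}]$ produces exactly the element $Q_J[1]*[-2^{l(J)}]$ that the lemma assigns to $Q_J\iota_0\in M_iS^0$.

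Finally, I would check that the relations match and that the two excluded types of sequence are genuinely redundant. The Frobenius relation $Q_0x=x^2$ descends to the basepoint component via the Cartan-style identity $Q_0(a*b)=Q_0a*Q_0b$ together with $Q_0[-k]=[-k]^2=[-2k]$. The empty sequence yields $[1]*[-1]=[0]$, the identity element, while an all-zero sequence $(0,\dots,0)$ of length $r$ yields $Q_0^r[1]*[-2^r]=[2^r]*[-2^r]=[0]$ by iteration of $Q_0x=x^2$; hence both are omitted from $M_iS^0$ so as not to include the unit among the generators. The main technical obstacle is the careful application of the group completion theorem to identify the localised algebra with the claimed quotient of a free commutative algebra; the remainder is bookkeeping of component indices and of the action of $Q_0$ on translations.
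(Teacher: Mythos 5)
The paper does not actually prove this lemma: it is stated as a citation to Wellington (with Hunter's account as a secondary reference), so there is no in-text argument to compare against. Your sketch reconstructs precisely the route taken in those sources --- the May--Milgram approximation $C_{i+1}S^0\to\Omega^{i+1}S^{i+1}$, the Cohen--Lada--May computation of $H_*C_{i+1}S^0$ as a free commutative algebra on the $Q_J[1]$ modulo $Q_0x=x^2$, group completion to localise at $\pi_0$, and translation to the basepoint component by $*[-2^{l(J)}]$ --- and the bookkeeping you do at the end (the Cartan identity $Q_0(a*b)=Q_0a*Q_0b$, the unit coming from the empty and all-zero sequences, hence their exclusion from $M_iS^0$) is correct and is exactly what makes the generating set match. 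The only point worth making explicit is where the hypothesis $i>0$ enters: for $i=0$ the Pontrjagin ring $H_*C_1S^0$ is a free associative algebra on $[1]$ and the component-translation argument is still harmless, but the clean identification of the localisation with a quotient of a free \emph{commutative} algebra uses that the product is commutative, i.e.\ that one is at least a double loop space; you invoke commutativity implicitly when you treat the localisation as adjoining inverses $[-n]$ to a commutative ring, so it would be worth flagging. With that caveat your outline is a faithful and correct account of the cited proof.
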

This allows to think of $H_*\Omega^{i+1}_0S^{i+1}$ as
$$\Z/2[Q_{j_1}\cdots Q_{j_r}[1]*[-2^{l(J)}]:0\leqslant j_1\leqslant j_2\leqslant\cdots\leqslant j_r<i+1]$$
where we do not allow the empty sequence, but we allow $Q_0$ acting as the squaring operation. For $Q_0S^0$ being the base point component of $QS^0$, we have
\cite[Page 86, Corollary 2]{DyerLashof} (see also \cite[Part I, Lemma 4.10]{CLM})
$$H_*Q_0S^0\simeq\Z/2[Q_J[1]*[-2^{l(J)}]:0\leqslant j_1\leqslant j_2\leqslant\cdots\leqslant j_r]$$
where we do not allow the empty sequence, but we allow $Q_0$ acting as the squaring operation. Note that for $J$ being nondecreasing in $Q_Jx$ is the same as $I$ being admissible when we translate $Q_Jx=Q^Ix$ in terms of upper indexed operations. Again, as the above proof, we have an obvious monomorphism of algebras
$$H_*\Omega^{i+1}_0S^{i+1}\to H_*Q_0S^0$$
provided by the stablisation map $\Omega^{i+1}_0S^{i+1}\to Q_0S^0$.\\

\textbf{Homology in terms of $Q^i$ operations.} For some purposes, it is easier to use description of $QX$ in terms of the operations $Q^i$. For an infinite loop space $Y$ these are additive homomorphisms $Q^i:H_*Y\to H_{*+i}Y$ operations which relate to lower indexed operations by $Q_iz=Q^{i+d}z$ for any $d$-dimensional homology class $z$. The homology of $QX$, as a module over the Dyer-Lashof algebra, is determined by
$$H_*QX\simeq\Z/2[Q^Ix_\mu:I\textrm{ admissible },\ex(I)>\dim x_\mu]$$
where $I=(i_1,\ldots,i_s)$ is admissible if $i_j\leq 2i_{j+1}$, $\ex(I)=i_1-(i_2+\cdots+i_s)$, and $\{x_\mu\}$ is an additive basis for $\widetilde{H}_*X$. We allow the empty sequence to be admissible with $\ex(\phi)=+\infty$ and $Q^\phi$ acting as the identity. In particular, in this description, $Q^iz=0$ if $i<\dim z$ and $Q^dz=z^2$ if $d=\dim z$.\\
The action of the Steenrod algebra on $H_*QX$ is determined by Nishida relations which read as follows
$$Sq_*^rQ^a=\sum_{t\geqslant 0}{a-r\choose r-2t}Q^{a-r+t}Sq^{t}_*.$$
Note that the maximum value for $t$ is $[r/2]+1$. In particular, we have
$$Sq^1_*Q^{2d}=Q^{2d-1},\ Sq^{1}_*Q^{2t+1}=0.$$
We also have the Cartan formula $Sq^{2t}_*\eta^2=(Sq^t_*\eta)^2$ \cite{Wellington}.

\section{Main results and computations}
Previously, we have determined the image of the unstable Hurewicz map ${_2\pi_*^s}\simeq{_2\pi_*}QS^0\to H_*QS^0$ when restricted to the submodule of decomposable elements in ${_2\pi_*^s}$. More precisely, we have the following \cite[Theorem 1.2]{Z-decomposables} (see also \cite[Theorem 3]{Za-ideal}).

\begin{thm}\label{decomp}
$(i)$ For $i,j>0$, consider the composition
$${\pi_i}QS^0\otimes {\pi_j}QS^0\lra {\pi_{i+j}}QS^0\stackrel{h}{\lra} H_{i+j}(QS^0;\Z)$$
where the first arrow is the product in $\pi_*^s$. If $i\neq j$ then $h(fg)=0$. Moreover, if $h(fg)\neq 0$ then $i=j$ and $f,g$ both are detected by the unstable Hopf invariant.\\
$(ii)$ For $i,j>0$, consider the composition
$${_2\pi_i}QS^0\otimes {_2\pi_j}QS^0\lra {_2\pi_{i+j}}QS^0\stackrel{h}{\lra} H_{i+j}QS^0$$
where the first arrow is the product in $\pi_*^s$. If $i\neq j$ then $h(fg)=0$. Moreover, if $h(fg)\neq 0$ then $i=j$ and $f=g$ with $f=\eta,\nu,\sigma$ or odd multiples of these elements. Also, the image of the composite
$$\la f:f=\eta,\nu,\sigma\ra\rightarrowtail{_2\pi_*}QS^0\stackrel{h}{\lra}H_*QS^0$$
only consists of the Hurewicz image of the Hopf invariant one elements $\eta,\nu,\sigma$, and the Kervaire invariant one elements $\eta^2,\nu^2,\sigma^2$.\\
\end{thm}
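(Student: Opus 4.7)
The plan is to reinterpret the composition product in $\pi_*^s$ via the smash-induced multiplication on $QS^0$, transport this to a secondary product on $H_*QS^0$, and exploit the restrictive description of primitive elements.

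Representing $f:S^i\to QS^0$ and $g:S^j\to QS^0$, the product $fg$ is realised by the composite
$$S^{i+j}=S^i\wedge S^j\xrightarrow{f\wedge g}QS^0\wedge QS^0\xrightarrow{\mu}QS^0,$$
where $\mu$ is induced by the ring structure $S^0\wedge S^0\to S^0$ of the sphere spectrum. Consequently $h(fg)=h(f)\bullet h(g)$, where $\bullet$ denotes the second product on $H_*QS^0$ coming from $\mu$, distinct from the Pontrjagin product $*$ arising from the loop sum. The task reduces to evaluating $\bullet$ on spherical (primitive) classes.

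On $H_*QS^0\simeq\Z/2[Q^I[1]\cdot[-2^{l(I)}]]$ the Nishida relations single out, as the only primitive monomials in positive degree, the classes $Q^{2^k}[1]\cdot[-2]$ (realising Hopf invariant one elements) and their Pontrjagin squares $(Q^{2^k}[1])^{*2}\cdot[-4]$ (realising Kervaire invariant one elements). Using $[1]$ as the $\bullet$-unit, the distributivity of $\bullet$ over $*$, and the mixed Cartan formula coupling $\bullet$ with the Dyer-Lashof operations, one computes
$$\bigl(Q^{2^k}[1]\cdot[-2]\bigr)\bullet\bigl(Q^{2^k}[1]\cdot[-2]\bigr)=(Q^{2^k}[1])^{*2}\cdot[-4],$$
matching the Kervaire invariant class. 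For $i\neq j$ a comparison of excess and homological degree shows $h(f)\bullet h(g)$ lies in the $*$-decomposable ideal; since $h(fg)$ must be primitive, this forces $h(fg)=0$. Combined with Adams's Hopf invariant one theorem this restricts the surviving diagonal pairs to $f=g\in\{\eta,\nu,\sigma\}$ up to odd stable multiples, which yield the same $\bullet$-square modulo $2$.

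For the integral statement in part (i) one additionally uses that $\eta^2,\nu^2,\sigma^2$ are $2$-torsion in $\pi_*^s$, hence vanish under the integral Hurewicz map, so no nontrivial decomposable survives integrally. The main technical obstacle is the third paragraph: verifying the mixed Cartan formula in this setting, handling the $[-2^{l}]$-translations required to land in the base-point component, and ruling out spurious iterated Dyer-Lashof contributions to $h(f)\bullet h(g)$ in the off-diagonal cases. Once the interaction of $\bullet$ with $*$ and with the Dyer-Lashof algebra is pinned down, the remaining conclusions follow by standard excess and degree bookkeeping.
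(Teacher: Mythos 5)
First, a point of comparison: the paper contains no proof of Theorem \ref{decomp} — it is imported from \cite{Z-decomposables} (see also \cite{Za-ideal}) — so your attempt can only be judged on its own terms. Your framework is the right one and is the standard (Madsen-style) approach: realise the composition product by the smash-induced multiplication $\mu$ on $QS^0$, so that $h(fg)=h(f)\bullet h(g)$, and analyse $\bullet$ through its interaction with the Pontrjagin product and the Dyer--Lashof operations. However, two of your steps are genuinely wrong, and the computational core is asserted rather than proved. The assertion that the only positive-degree primitives (or ``primitive monomials'') of $H_*Q_0S^0$ are the classes $Q^{2^k}[1]*[-2]$ and their $*$-squares is false: no generator $Q^a[1]*[-2]$ with $a\geqslant 2$ is even primitive (its coproduct has middle terms $Q^b[1]\otimes Q^c[1]$), the primitives of this polynomial Hopf algebra contain a Newton-type class over every generator, and even the $A$-annihilated primitives are not exhausted by the Hopf and Kervaire classes --- if they were, Conjecture \ref{Curtisconj} would follow immediately from primitivity and $Sq^r_*$-annihilation of spherical classes, making the whole subject trivial. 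The Hopf invariant one input has to enter differently: from $h(f)\bullet h(g)\neq 0$ one must extract that the height-two component of $h(f)$ (its image under the stable James--Hopf map $j_2$, i.e.\ its coefficient on $Q^i[1]*[-2]$) is nonzero, and then invoke detection by $Sq^{i+1}$ in the mapping cone as in \cite[Proposition 5.8]{AsadiEccles}; none of this appears in your sketch.

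Second, in the off-diagonal case you conclude from ``$h(f)\bullet h(g)$ is $*$-decomposable and primitive'' that it vanishes. Over $\Z/2$ a decomposable primitive need not vanish: by \cite[Proposition 4.21]{MM} it is a square (the paper itself uses exactly this fact in the proof of Theorem \ref{singleloop}), so when $i\neq j$ with $i+j$ even your argument does not close. What is actually needed is the multiplicative input you defer as ``the main technical obstacle'': the vanishing $(a*b)\bullet c=0$ for $a,b$ of positive degree and $c$ a positive-degree primitive (which follows from distributivity of $\bullet$ over $*$ via the coproduct), together with the explicit formula for $Q^I[1]\bullet Q^J[1]$ from which both the off-diagonal vanishing and the claimed identity $\bigl(Q^{2^k}[1]*[-2]\bigr)\bullet\bigl(Q^{2^k}[1]*[-2]\bigr)=(Q^{2^k}[1])^{*2}*[-4]$ would follow. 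Finally, for part (i) the inference that $\eta^2,\nu^2,\sigma^2$ vanish under the integral Hurewicz map because they are $2$-torsion in $\pi_*^s$ is a non sequitur: $H_*(Q_0S^0;\Z)$ is almost entirely torsion, and $\eta$ itself is $2$-torsion with nonzero integral Hurewicz image. As it stands, the proposal is a plausible outline of the correct strategy, not a proof.
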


Before going to our results, we recall some facts and fix our notation. We consider using James fibrations $S^n\stackrel{E}{\to}\Omega S^{n+1}\stackrel{H}{\to}\Omega S^{2n+1}$ referring to $E$ as the suspension and $H$ as the second James-Hopf map. We also write $E^\infty:X\to QX$ for the inclusion which induces $\pi_*X\to\pi_*QX\simeq\pi_*^sX$ sometimes referring to it as the stablisation. We also may write $E^k$ for the iterated suspension $X\to\Omega^k\Sigma^kX$. We shall use $H_*\Omega\Sigma X\simeq T(\widetilde{H}_*X)$ where $X$ is any path connected space, $T(-)$ is the tensor algebra functor and $\widetilde{H}_*$ denotes the reduced homology. We shall write $\sigma_*:H_*\Omega X\to H_{*+1}X$ for homology suspension induced by the evaluation $\Sigma\Omega X\to X$ recalling that it kills decomposable elements in the Pontrjagin ring $H_*\Omega X$.\\

We begin with the following.

\begin{thm}
(i) The only spherical classes in $H_*\Omega^2S^3$ live in dimensions $1$ and $2$, arising from the identity $S^3\to S^3$ and $\Sigma\eta:S^4\to S^3$. There are no spherical classes in $H_*\Omega S^3$ other than the bottom dimensional class given  by $S^2\to\Omega S^3$.\\
(ii) The only spherical classes in $H_*\Omega^4S^7$ live in dimensions $3$ and $6$, arising from the identity $S^7\to S^7$ and $\Sigma^3\nu:S^{10}\to S^7$. There are no spherical classes in $H_*\Omega^i S^7$ for $i<4$ other than $S^{3+i}\to \Omega^{4-i}S^7$ adjoint to the identity $S^7\to S^7$ that corresponds to the inclusion of the bottom cell.\\
(iii) The only spherical classes in $H_*\Omega^8S^{15}$ live in dimensions $7$ and $14$, arising from the identity $S^{15}\to S^{15}$ and $\Sigma^7\sigma:S^{22}\to S^{15}$. There are no spherical classes in $H_*\Omega^i S^{15}$ for $i<8$ other that the one arising from the inclusion of the bottom cell corresponding to the identity on $S^{15}$.\\
%(iv) There is no class in $\pi_*B(r)$ that is detected by homology, unless $r=0$.
\end{thm}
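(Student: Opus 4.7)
The plan is to treat the three parts in parallel, since they differ only in replacing $\eta$ by $\nu$ or $\sigma$ and $(n,2n-1)$ by $(2,3)$, $(4,7)$, $(8,15)$; I write it out for part (i). From the presentation $H_*\Omega^2 S^3\cong\mathbb Z/2[g_1,Q_1g_1,Q_1^2g_1,\ldots]$ recalled in Section 3, the primitives of this Hopf algebra are the Pontrjagin powers $(Q_1^kg_1)^{2^r}$ with $k,r\geq 0$, and every spherical class has to be one of them. The primitives $g_1$ and $g_1^2=Q_0g_1$ in dimensions $1$ and $2$ are realised by the identity $S^3\to S^3$ and by $\Sigma\eta\in\pi_4S^3$, the latter because $\eta$ has Hopf invariant one and hence $h(\Sigma\eta)=g_1^2$; this disposes of the positive half.

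The main device for ruling out every other primitive is the looped second James-Hopf map $\Omega H:\Omega^2S^3\to\Omega^2S^5$. Naturality of Kudo's transgression in the path-loop fibration over $\Omega S^3$, together with the classical fact that $H_*:\mathbb Z/2[a_2]\to\mathbb Z/2[a_4]$ sends $a_2^{2j}$ to $a_4^j$ and $a_2^{2j+1}$ to zero, forces $(\Omega H)_*(g_1)=0$ and $(\Omega H)_*(Q_1^kg_1)=Q_1^{k-1}g_3$ for $k\geq 1$, and this extends multiplicatively over Pontrjagin product because $\Omega H$ is a loop map. Hence if $f\in\pi_m\Omega^2S^3$ is spherical with $h(f)=(Q_1^kg_1)^{2^r}$ and $k\geq 1$, then $\Omega H\circ f$ is spherical at $(Q_1^{k-1}g_3)^{2^r}$ in $H_*\Omega^2S^5$; iterating along the tower $\Omega^2 S^3\to\Omega^2 S^5\to\Omega^2 S^9\to\cdots$ eventually produces a bottom-cell spherical class in some $\Omega^2 S^{2^\ell+1}$, which via EHP amounts to a nonzero iterated Hopf invariant of an adjoint of $f$ living in a sphere of dimension outside $\{2,4,8\}$, contradicting Adams' theorem. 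For the leftover primitives $g_1^{2^r}$ with $r\geq 2$, which are in the kernel of $(\Omega H)_*$, I would instead use the injection $H_*\Omega^2S^3\hookrightarrow H_*QS^1$ from Proposition \ref{homology-injec}: such a spherical class would stabilise to one at $g_1^{2^r}$ in $H_*QS^1$, but the analysis underlying Theorem \ref{decomp}, coupled with $\eta^{2^r}=0$ in $\pi_*^s$ for $r\geq 2$, rules out any element of $\pi_*^s$ with that Hurewicz image.

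For the $\Omega S^3$ half of (i), and for the $\Omega^iS^{2n-1}$ with $i<n$ halves of (ii) and (iii), I would use an adjoint argument: any spherical $f\in\pi_m\Omega^iS^{2n-1}$ has a Freudenthal adjoint $\check f\in\pi_{m-(n-i)}\Omega^nS^{2n-1}$ with $h(f)=\sigma_*^{\,n-i}\,h(\check f)$. Because iterated homology suspension kills Pontrjagin products and annihilates every Kudo-Araki image $Q_j(-)$ with $j\geq 1$, of the two spherical primitives $g_{n-1}$ and $g_{n-1}^2$ produced by the main argument only $g_{n-1}$ contributes, and its iterated suspension is exactly the bottom-cell class of $H_*\Omega^iS^{2n-1}$. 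Parts (ii) and (iii) then run on the same template with the James-Hopf maps $\Omega^3H:\Omega^4S^7\to\Omega^4S^{13}$ and $\Omega^7H:\Omega^8S^{15}\to\Omega^8S^{29}$ and the Hopf invariant one elements $\nu$ and $\sigma$.

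I anticipate the main obstacle to be the bookkeeping in the James-Hopf iteration for (ii) and (iii): the algebras $H_*\Omega^4S^7$ and $H_*\Omega^8S^{15}$ admit many more Kudo-Araki monomials $Q_{j_1}\cdots Q_{j_s}g_{n-1}$ than appear for $n=2$, and one must verify term by term that under iterated application of the appropriate looped James-Hopf these monomials track to monomials over a base sphere whose dimension eventually leaves the forbidden set $\{3,5,9,17,\ldots\}$ so that Adams' theorem can be invoked. Synchronising the $Q_0$-squaring strands (the $g_{n-1}^{2^r}$ direction, $r\geq 2$) with the $Q_1$-iteration strands so that both terminate inside the Adams range is the step I expect to be the most technical.
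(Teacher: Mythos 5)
Your strategy (classify the primitives of $H_*\Omega^2S^3$, push the $Q_1$-generators up a tower of looped James--Hopf maps until Adams' theorem applies, and treat powers of the bottom class separately) is workable in outline, but two steps fail as written. First, your disposal of $g_1^{2^r}$ for $r\geqslant 2$ is not valid: Theorem \ref{decomp} computes the Hurewicz image of \emph{products} $fg$ in $\pi_*^s$, so the vanishing $\eta^{2^r}=0$ only rules out the obvious candidate and says nothing about an indecomposable element of $\pi_{2^r-1}^s$ whose Hurewicz image in $H_*QS^1$ happens to be $g_1^{2^r}$. The correct elimination is the Nishida-relation argument of Theorem \ref{singleloop} and Proposition \ref{2^t}: a spherical class equal to $\xi^2$ forces $\xi$ to be odd-dimensional, which kills $g_1^{2^r}$ for $r\geqslant 2$. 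The same gap reappears at the end of your James--Hopf iteration: when $h(f)=(Q_1^kg_1)^{2^r}$ with $k\geqslant1$ and $r\geqslant 2$, the terminal class is a $2^r$-th power of a bottom class over a larger sphere, which is \emph{not} the bottom-cell class, and Harper's criterion (square of the bottom class $\Leftrightarrow$ unstable Hopf invariant one) does not apply to it; Adams' theorem therefore gives no contradiction there without the extra $Sq^1_*$ argument. You flag the ``synchronisation'' of the two strands as the anticipated obstacle, but it is precisely where the proof is incomplete. Second, for parts (ii) and (iii) the primitive classification is no longer ``powers of $Q_1^kg$'': $H_*\Omega^4S^7$ and $H_*\Omega^8S^{15}$ carry operations $Q_1,\ldots,Q_3$ and $Q_1,\ldots,Q_7$, so there are many primitive generators in a given dimension, and the term-by-term tracking under iterated looped James--Hopf maps that you defer is the bulk of the work rather than bookkeeping.

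The paper takes a different and much shorter route that bypasses all of this. It uses the Hopf fibrations themselves: $\Omega^2\eta$ is a homology isomorphism $\Omega^2S^3\to\Omega^2_0S^2$ (and the splittings $\Omega S^4\simeq S^3\times\Omega S^7$, $\Omega S^8\simeq S^7\times\Omega S^{15}$ give homology monomorphisms out of $\Omega^4S^7$ and $\Omega^8S^{15}$), so a nonzero $h(f)$ forces $h(\eta\cdot E^\infty f)\neq 0$ in $H_*Q_0S^0$, i.e.\ a nonzero Hurewicz image of a \emph{product} in $\pi_*^s$. Theorem \ref{decomp}(ii) then immediately places $E^\infty f$ in the same degree as $\eta$ (resp.\ $\nu$, $\sigma$), so $n=2$ (resp.\ $6$, $14$); the identification of $f$ and the non-existence statements for fewer loops follow from $\pi_4S^3\simeq\Z/2$ (resp.\ $\pi_{10}S^7$, $\pi_{22}S^{15}$) and the fact that the resulting Hurewicz images are squares, hence die under homology suspension. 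If you want to salvage your approach, replace the $r\geqslant 2$ step by Proposition \ref{2^t}, and expect the analysis for (ii) and (iii) to require machinery on the scale of Lemmata \ref{tripleloop-1}--\ref{tripleloop-3}.
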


\begin{proof}
Let $k\geqslant 0$. Consider the stablisation $S^{i+k}\to QS^{i+k}$ and by abuse of notation write $E^\infty:\Omega^iS^{i+k}\to\Omega^iQS^{i+k}=QS^k$ for its $i$-fold loop. Recall that $E^\infty_*$ induces a monomorphism in homology. We also write $E:S^k\to\Omega S^{k+1}$ for the suspension map, that induces the suspension homomorphism $\pi_*S^k\to\pi_{*}\Omega S^{k+1}\simeq\pi_{*+1}S^{k+1}$. We proceed as follows.\\
(i) Obviously, the inclusion of the bottom cell $S^1\to\Omega^2S^3$ is nontrivial in homology. Let $f:S^n\to\Omega^2S^3$, $n>1$, be any map with $f_*\neq 0$. First, note that the Hopf fibration $\eta:S^3\to S^2$ induces a homotopy equivalence $\Omega^2\eta:\Omega^2S^3\to\Omega^2_0S^2$ where $\Omega^2_0S^2$ is the base point component of $\Omega^2S^2$. In particular, $\Omega^2\eta$ is an isomorphism in homology. Consequently, the composition
$((\Omega^2E^\infty)(\Omega^2\eta)f)_*\neq 0$. On the other hand, note that we have a commutative diagram
$$\xymatrix{
\Omega^2S^3\ar[r]^-{\Omega^2 E^\infty}\ar[d]^-{\Omega^2\eta}    & QS^1\ar[d]^-{\Omega^2 \eta}\\
\Omega^2_0S^2\ar[r]^-{\Omega^2 E^\infty}                        & Q_0S^0
}$$
where the right vertical map is obtained from $\eta:QS^3\to QS^2$ and we write $Q_0S^0$ for the base point component of $QS^0$. This implies that $(\Omega^2\eta)_*((\Omega^2E^\infty)f)_*\neq 0$. This implies that for $E^\infty f$ and $\eta$ as elements of $\pi_*^s$, we have $h(\eta(E^\infty f))\neq 0$ where $h$ is the unstable Hurewicz homomorphism. By Theorem \ref{decomp}(ii) if $n>0$ then $E^\infty f$ and $\eta$ are in the same dimension. That is $E^\infty f\in\pi_1^s$, consequently $n=2$. Therefore, $f\in\pi_2\Omega^2S^3\simeq\pi_4S^3\simeq\Z/2$, which implies that $f= E\eta$ viewing $\eta:S^3\to S^2$. Moreover, note that by Freudenthal's theorem $f$ pulls back to $\pi_3S^2$ allowing to consider $f$ as $f:S^2\to\Omega S^2$. The only $2$-dimensional class in $H_2\Omega S^2$ is given by $x_1^2$ which maps to $x_1^2\in H_2\Omega^2S^3$ which is a decomposable class, where $x_1\in\widetilde{H}_1S^1$ is a generator.\\
Moreover, suppose $f$ is any map $S^{n+1}\to\Omega S^3$ which is nontrivial in homology. By adjointing down, we have a map $f':S^n\to\Omega^2S^3$ which is nontrivial in homology. We have $f_*\sigma_*x=\sigma_*f'_*x$ for any homology class $x$. However, by the above computation, if $f'_*x\neq 0$ then $f'_*x=x_1^2$ which is a decomposable class that is killed under homology suspension. This contradicts the claim that $f_*\neq 0$ for $f:S^{n+1}\to\Omega S^3$. \\
(ii) The argument here is almost the same. We consider the Hopf fibration
$$\Omega S^7\to\Omega S^4\to S^3\to S^7\to S^4$$
noting that $H(\nu)=1$ where $H:\Omega S^4\to \Omega S^7$ is the second James-Hopf map. This provides us with a decomposition $\Omega S^4\to S^3\times\Omega S^7$. A choice of decomposition may be given by $\Omega S^7\stackrel{(*,1)}{\longrightarrow}S^3\times\Omega S^7 \stackrel{E+\Omega\nu}{\longrightarrow}\Omega S^4$ and $H:\Omega S^4\to \Omega S^7$ in the other direction where $+$ is the loop sum in $\Omega S^4$. The decomposition of $3$-fold loop spaces, in particular implies that there is monomorphism in homology given by $\Omega^4\nu:\Omega^4 S^7\to\Omega^4S^4$.\\
Note that $\widetilde{H}_n\Omega^4S^7\simeq 0$ for $n<3$. Similar to the previous part, the inclusion of the bottom cell $S^3\to\Omega^4S^7$, adjoint to the identity $S^7\to S^7$ is nontrivial in homology. Suppose $f:S^n\to\Omega^4S^7$ with $n>3$ and $f_*\neq 0$ is given. Then, similar as above, we have $((\Omega^3 E^\infty)(\Omega^3\nu)f)_*\neq0$. Consider the commutative diagram
$$\xymatrix{
S^7\ar[r]^-{E^\infty}\ar[d]^-\nu & QS^7\ar[d]^-{\nu}\\
S^4\ar[r]^-{E^\infty}            & QS^4}$$
and then loop it $4$-times. Moreover, using the splitting of $\Omega S^4$ as above, we may rewrite the quadruple loop of the left vertical arrow and obtain a commutative diagram of $4$-fold loop maps as
$$\xymatrix{
\Omega^4S^7\ar[r]^-{\Omega^4E^\infty}\ar[d]_-{\Omega^4(*,1_{S^7})} & QS^3\ar[dd]^-{\Omega^4\nu}\\
S^3\times\Omega^4S^7\ar[d]_-{\Omega^3(E+\Omega\nu)} \\
\Omega^4S^4\ar[r]^-{\Omega^4E^\infty}                     & QS^0}$$
noting that the composition of vertical arrows on the left is just $\Omega^4\nu$. By commutativity of the above diagram, we see that $h(\nu E^\infty f)\neq 0$. Consequently, by Theorem \ref{decomp}(ii) as elements of $\pi_*^s$, $E^\infty f$ is in the same dimension as $\nu$ implying that $n=6$. Therefore, $f\in{_2\pi_6}\Omega^4S^7\simeq{_2\pi_{10}}S^7\simeq\Z/8$ and $f=E^3\nu$ where $E^3$ is the iterated suspension (strictly speaking, $f$ will be three fold suspension of an odd multiple of $\nu$ in ${_2\pi_3^s}$, but since we are working at the prime $2$, we take it as $\nu$). By Freudenthal suspension theorem it pulls back to $\pi_7S^4\simeq\pi_6\Omega S^4$. Hence, $f$ does factorise as $f:S^6\to\Omega S^4\to\Omega^4S^7$. The map $S^6\to\Omega S^4$ has square of a three dimensional class as its Hurewicz image. Hence, as previous part, eliminating classes in $H_*\Omega^iS^7$ with $i<4$ from being spherical.\\
(iii) Similar to the previous part, one has to use a decomposition $\Omega S^8\simeq S^7\times\Omega S^{15}$. We leave the rest to the reader.\\
%(iv) This follows from part (i) and Brown-Peterson $2$-local equivalence.
\end{proof}

The above has an application to the finite case of Curtis conjecture.

\begin{crl}\label{Curtis-1}
Suppose $f\in H_*\Omega^8S^8$ is a spherical class. Then, $f$ either corresponds to a spherical class in $\Omega^8S^{15}$ or is suspension of a spherical class in $\Omega^7S^7$ under stablisation map $\Omega^7S^7\to\Omega^8S^8$. In particular, there are spherical classes arising from Hopf invariant one elements $\eta,\nu,\sigma$ and Kervaire invariant one elements $\eta^2,\nu^2,\sigma^2$ in $H_*\Omega^8S^8$.
\end{crl}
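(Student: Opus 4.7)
The plan is to establish a $2$-local splitting of $\Omega^8 S^8$ using the Hopf invariant one element $\sigma$, exactly analogous to the splittings of $\Omega S^4$ and $\Omega S^8$ exploited in parts (ii) and (iii) of the preceding theorem. Since $H(\sigma)=1$ for the second James--Hopf map $H:\Omega S^8\to\Omega S^{15}$, the map
$$S^7\times \Omega S^{15}\xrightarrow{E+\Omega\sigma}\Omega S^8$$
(with $E:S^7\to\Omega S^8$ the suspension and $+$ the loop sum) is a homotopy equivalence at the prime $2$, inverted up to homotopy by $(p,H)$ with $p$ the projection of $\Omega S^8$ onto its bottom cell $S^7$. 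Taking $7$-fold loops then produces a $2$-local equivalence
$$\Omega^8 S^8\simeq \Omega^7 S^7\times \Omega^8 S^{15},$$
under which the stablization $\Omega^7 S^7\to\Omega^8 S^8$ becomes the first-factor inclusion and $\Omega^8 H$ becomes projection onto the second factor.

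Given any spherical class $f:S^n\to\Omega^8 S^8$, I would combine primitivity of $[S^n]$ with the K\"unneth isomorphism $H_*\Omega^8 S^8\cong H_*\Omega^7 S^7\otimes H_*\Omega^8 S^{15}$ to write
$$f_*[S^n]=(p_1f)_*[S^n]\otimes 1+1\otimes (p_2f)_*[S^n],$$
where $p_1,p_2$ are the projections from the splitting. The first summand is the image under stablization of a spherical class in $H_*\Omega^7 S^7$, and the second comes from a spherical class in $H_*\Omega^8 S^{15}$; this delivers the dichotomy asserted in the statement.

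For the ``in particular'' claim, I would exhibit the six Hurewicz classes through the two factors. Part (iii) of the preceding theorem produces spherical classes in $H_*\Omega^8 S^{15}$ corresponding to $\sigma$ and $\sigma^2$ (the inclusion of the bottom cell and the image of $\Sigma^7\sigma$), and these pass to $H_*\Omega^8 S^8$ through the second-factor inclusion. For $\eta,\nu,\eta^2,\nu^2$, each lifts to an element of $\pi_*\Omega^7 S^7\cong\pi_{*+7}S^7$ (by surjectivity of Freudenthal in the relevant range) and has nontrivial Hurewicz image in $H_*Q_0 S^0$ (the standard computation, also recorded in Theorem \ref{decomp}); by Proposition \ref{homology-injec} the stablization $H_*\Omega^7_0 S^7\hookrightarrow H_*Q_0 S^0$ is injective, so these Hurewicz images are already nonzero in $H_*\Omega^7 S^7$ and hence in $H_*\Omega^8 S^8$ after stablization. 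The only step with genuine content is setting up the $2$-local splitting; once it is in place, the remainder is a routine combination of K\"unneth with previously proved results, and I anticipate no substantial obstacle.
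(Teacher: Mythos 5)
Your proposal matches the paper's (essentially unwritten) proof: the corollary rests on the splitting $\Omega S^8\simeq S^7\times\Omega S^{15}$ via $E+\Omega\sigma$, exactly parallel to the decomposition $\Omega S^4\simeq S^3\times\Omega S^7$ used in part (ii) of the preceding theorem, looped seven times and combined with the K\"unneth/primitivity decomposition and the already-computed spherical classes in the two factors. One cosmetic correction: there is no a priori ``projection of $\Omega S^8$ onto its bottom cell,'' so the equivalence should be inverted abstractly (as the paper does, $E+\Omega\sigma$ is a homology isomorphism between simply connected spaces) rather than by an explicit pair $(p,H)$.
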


Note that spherical classes in $\Omega^8S^{15}$ arise from $1$ and $\sigma$ which map to $\sigma$ and $\sigma^2$ under $\Omega\sigma:\Omega S^{15}\to\Omega S^8$. By previous theorem spherical classes in $\Omega^8S^{15}$ are known. Hence, the computations in the above theorem will be complete once we know spherical classes in $\Omega^7S^7$.

\begin{rmk}
Let's note that the existence of Hopf fibrations, as well as James-Hopf maps together with Freudenthal suspension theorem all work well integrally. Also, the computations of Hurewivz map on decomposable elements maybe carried out integrally as done in Theorem \ref{decomp}(i). We then conclude that the above results may be stated integrally, possibly with some modifications due to omitting the odd torsion.
\end{rmk}

\begin{crl}
(i) The only spherical class in $H_*\Omega S^2$ corresponds to $\eta\in\pi_1^s\simeq\pi_2QS^1$. In fact there is such a class given by $S^2\to\Omega S^2$ provided by the adjoint of $\eta:S^3\to S^2$.\\
(ii) The only spherical class in $H_*\Omega^iS^{3+i}$, $1\leqslant i\leqslant3$, corresponds to $\nu\in\pi_3^s\simeq\pi_6QS^3$. In fact there is such a class give by $S^6\to\Omega S^4\stackrel{E^{i-1}}{\to}\Omega^iS^{4+i}$. Moreover, there is no spherical class in $H_*\Omega^jS^{4+i}$ for $j<i$.\\
(iii) The only spherical class in $H_*\Omega^iS^{7+i}$, $1\leqslant i\leqslant7$, corresponds to $\sigma\in\pi_7^s\simeq\pi_{14}QS^7$. In fact there is such a class given by
$S^{14}\to\Omega S^8\stackrel{E^{i-1}}{\to}\Omega^iS^{8+i}$. Moreover, there is no spherical class in $H_*\Omega^jS^{8+i}$ for $j<i$.
\end{crl}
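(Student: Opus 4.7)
The plan is to deduce all three parts uniformly from the preceding Theorem. Each case concerns a loop space $\Omega^iS^{k+i}$ with $k\in\{1,3,7\}$, and the preceding Theorem has already classified the spherical classes in the ``top'' case $\Omega^{k+1}S^{2k+1}$, namely in $\Omega^2S^3$, $\Omega^4S^7$, and $\Omega^8S^{15}$. The bridge I intend to use is the iterated stabilization $E^{k+1-i}\colon\Omega^iS^{k+i}\to\Omega^{k+1}S^{2k+1}$, which by Proposition \ref{homology-injec} is a monomorphism in mod-$2$ homology.

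Given a spherical class $f\colon S^n\to\Omega^iS^{k+i}$ with $f_*\neq 0$, this monomorphism forces $(E^{k+1-i}f)_*\neq 0$, so $E^{k+1-i}f$ is itself spherical in $\Omega^{k+1}S^{2k+1}$. The preceding Theorem then confines $n$ to $\{k,2k\}$: in dimension $k$ the map $E^{k+1-i}f$ is the bottom cell and $f$ itself is the bottom cell $S^k\to\Omega^iS^{k+i}$ adjoint to the identity $S^{k+i}\to S^{k+i}$, whereas in dimension $2k$ the stabilized class is the Hopf-invariant-one element $\Sigma^k\alpha$ with $\alpha\in\{\eta,\nu,\sigma\}$. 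For the latter case I will use Freudenthal's theorem to pull $\Sigma^k\alpha$ back to $\Sigma^{i-1}\alpha\in\pi_{2k+i}S^{k+i}$ on the $2$-component, and for $k=3,7$ invoke the Hopf splittings $\Omega S^{k+1}\simeq S^k\times\Omega S^{2k+1}$ of the preceding proof; this pins down the representative $S^{2k}\to\Omega S^{k+1}\xrightarrow{E^{i-1}}\Omega^iS^{k+i}$ of the statement and identifies its Hurewicz image as the Pontrjagin square of the bottom class.

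The ``moreover'' clauses in parts (ii) and (iii) follow from the same template: a spherical class in $\Omega^jS^{k+i+1}$ with $j<i$ stabilizes, via the monomorphism $E^{k-i}\colon\Omega^jS^{k+i+1}\to\Omega^{j+k-i}S^{2k+1}$, to a spherical class in $\Omega^{j+k-i}S^{2k+1}$ with $j+k-i<k$; in this range the preceding Theorem asserts that only the bottom cell is spherical, and the monomorphism transports this conclusion back downstairs.

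The main obstacle I anticipate is essentially bookkeeping rather than anything geometrically deep: one must verify that the preimage of $\Sigma^k\alpha$ under suspension is exactly $\Sigma^{i-1}\alpha$, with no exotic contribution from the kernel of the relevant suspension map in the metastable range, and check that the composite $S^{2k}\to\Omega S^{k+1}\to\Omega^iS^{k+i}$ indeed has Hurewicz image the Pontrjagin square of the bottom class. Both checks reduce to Freudenthal and to the James-splitting calculations already in force in the preceding Theorem's proof.
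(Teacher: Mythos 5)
Your plan is essentially the paper's own proof: both rest on the iterated suspension $\Omega^iS^{k+i}\to\Omega^{k+1}S^{2k+1}$ being a monomorphism in homology, so that the classification of spherical classes in $\Omega^2S^3$, $\Omega^4S^7$, $\Omega^8S^{15}$ from the preceding theorem pins down the dimension, after which Freudenthal and the Hopf-invariant/James-splitting identification give the representative (the paper handles the unstable edge case $i=1$ exactly by the Hopf-invariant argument you flag as the remaining check). The only cosmetic difference is in the ``moreover'' clause, where the paper adjoints down and uses that homology suspension kills decomposables, while you suspend up into $\Omega^mS^{2k+1}$, $m<k+1$, and quote the nonexistence statement there; these carry the same content.
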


\begin{proof}
All of these follow from the above theorem together with the fact that the suspension map $\Omega S^{k+1}\to\Omega^{i}S^{k+i}$ induces a monomorphism in homology, hence preserving spherical classes. The nonexistence part, follows similar to the previous theorem by adjointing down. We do (ii) for illustration. Consider the iterated suspension map $\Omega^iS^{3+i}\to\Omega^4S^7$. Hence, for $f:S^n\to\Omega^iS^{3+i}$ with $f_*\neq 0$ the composition $S^n\to\Omega^iS^{4+i}\to\Omega^4S^7$ is nontrivial in homology. Therefore, $n=6$ and $f:S^6\to\Omega^iS^{i+3}$ which maps to $\nu$ by the above theorem. In fact, for $i>1$, $f\in\pi_{6+i}S^{3+i}\simeq\pi_3^s\simeq\Z/8$. Hence, $f=E^i\nu$. For $i=1$, $f:S^6\to\Omega S^4$ is nontrivial, if and only if $f_*=g_3^2$, hence the adjoint of $f$ is detected by the unstable Hopf invariant, which working at the prime $2$ means it is detected by $Sq^4$ in its mapping cone. This means $f$ is an odd multiple of $S^7\to S^4$.
\end{proof}

Finally, we conclude by computing spherical classes in $H_*\Omega^2S^2$.
\begin{thm}
The only spherical classes in $H_*\Omega^2S^2$ arise from the Hopf invariant one and Kervaire invariant one elements $\eta$ and $\eta^2$, respectively.\\
%(ii) The only spherical classes in $H_*\Omega^3S^3$ are precisely the image of spherical classes in $H_*\Omega^2S^2$, namely the Hopf invariant and Kervaire invariant one elements $\eta$ and $\eta^2$ respectively.
\end{thm}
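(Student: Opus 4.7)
The plan is to reduce the claim to part (i) of the preceding theorem, which already identifies the spherical classes in $H_*\Omega^2S^3$ as arising only in dimensions $1$ and $2$. The key input is the Hopf fibration $S^1\to S^3\stackrel{\eta}{\to} S^2$: since $\pi_nS^1=0$ for $n\geqslant 2$, the long exact sequence shows that $\eta_*:\pi_nS^3\to\pi_nS^2$ is an isomorphism for every $n\geqslant 3$. Consequently $\Omega^2\eta:\Omega^2S^3\to\Omega^2_0S^2$, landing in the basepoint component since $\Omega^2S^3$ is connected, is a weak homotopy equivalence.

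Next, I would account for the fact that $\Omega^2S^2$ has $\pi_0\cong\Z$ many components. The loop sum makes every component homotopy equivalent to $\Omega^2_0S^2\simeq\Omega^2S^3$, and translation by an element of $\pi_0$ is a self-homotopy-equivalence of the space $\Omega^2S^2$, hence preserves the (non)triviality of Hurewicz images. Given any spherical class $f:S^n\to\Omega^2S^2$ with $n\geqslant 1$, $f$ factors through a single component $\Omega^2_kS^2$; translating by $[-k]$ and then composing with the homotopy inverse of $\Omega^2\eta$ produces a spherical class $\tilde f:S^n\to\Omega^2S^3$.

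By the preceding theorem part (i), the only such $\tilde f$ occur in dimensions $n=1$, as the adjoint of the identity $S^3\to S^3$, or $n=2$, as the adjoint of $\Sigma\eta:S^4\to S^3$. Tracing these back through the equivalence and the translation: the dimension-one spherical class in $H_*\Omega^2S^2$ is represented by the adjoint of $\eta\circ\mathrm{id}=\eta:S^3\to S^2$, i.e.\ the Hopf invariant one element $\eta\in\pi_1\Omega^2S^2\cong\pi_3S^2$; the dimension-two class is represented by the adjoint of $\eta\circ\Sigma\eta=\eta^2:S^4\to S^2$, i.e.\ the Kervaire invariant one element $\eta^2\in\pi_2\Omega^2S^2\cong\pi_4S^2$. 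Since $\pi_n\Omega^2_kS^2\cong\pi_{n+2}S^2$ is independent of the component index $k$ for $n\geqslant 1$, the underlying stable homotopy element is the same regardless of which component one started in.

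I do not expect a serious obstacle. The only point requiring care is confirming that loop-sum translation preserves whether a positive-dimensional homology class is spherical, which is immediate once one notes that translation is a self-homotopy-equivalence of $\Omega^2S^2$ and therefore induces an isomorphism on homology. The remainder of the argument is a direct application of the previous theorem together with the Hopf-fibration equivalence $\Omega^2S^3\simeq\Omega^2_0S^2$.
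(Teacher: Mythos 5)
Your proposal is correct and follows essentially the same route as the paper: both reduce the claim to the earlier computation for $H_*\Omega^2S^3$ via the equivalence $\Omega^2\eta:\Omega^2S^3\to\Omega^2_0S^2$ coming from the Hopf fibration, identifying the resulting classes with $\eta$ and $\eta^2$. Your additional care with the $\pi_0\cong\Z$ components and loop-sum translation is a point the paper glosses over, but it does not change the argument.
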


\begin{proof}
We apply the homotopy equivalence $\Omega^2\eta$. By this equivalence, any spherical class in $H_*\Omega^2S^2$ is image of a spherical class in $H_*\Omega^2S^3$. By the above computations, spherical classes in $H_*\Omega^2S^3$ arise from $1,\eta\in\pi_*\Omega^2S^3$ which map to $\eta$ and $\eta^2$ in $\pi_*\Omega^2S^2$ under $(\Omega^2\eta)_\#:\pi_*\Omega^2S^3\to\pi_*\Omega^2S^3$. This completes the proof.\\
%(ii) The suspension map $\Omega^2S^2\to\Omega^3S^3$ induces a monomorphism in homology, hence $\eta$ and $\eta^2$ survive to spherical classes in $H_*\Omega^3S^3$. Next, we use the decomposition $\Omega S^4\simeq S^3\times \Omega S^7$. From this, a spherical class in $\Omega S^3$ gives rise to a spherical class in $\Omega^2S^4$ ???
\end{proof}

\section{Further computations}
The aim of this section is to compute spherical classes in more loop spaces associated to spheres. For instance, the decomposition $\Omega S^4\simeq S^3\times\Omega S^7$ provides a decomposition of $2$-loop spaces $\Omega^3S^4\simeq \Omega^2S^3\times\Omega^3S^7$. The spherical classes in the second factor arise from inclusion of the bottom cell. Noting that the decomposition of $\Omega S^7$ off $\Omega S^4$ is provided by $\Omega\nu:\Omega S^7\to\Omega S^4$, the bottom cell of $\Omega S^7$ gives rise to a spherical class in $\Omega^3S^4$ detecte by $\nu$. The other spherical classes arise from the first factor, that is the image of $E:\Omega^2S^3\to\Omega^3S^4$. Moreover, note that by adjointing down, we may use a spherical classes in $\Omega^iS^4$, with $i<3$, to get a spherical classes in homology of $\Omega^3S^4$. Since Hurewicz image of $\eta:S^2\to\Omega S^2$ is a square, and since $\Omega E:\Omega S^2\to\Omega^3S^4$ is  loop map, then the Hurewicz image of $\eta:S^2\to\Omega^3S^4$ remains a square which will die under homology suspension $H_*\Omega^3S^4\to H_{*+1}\Omega^2S^4$. We then have proved the following.
\begin{lmm}
The only spherical classes in $H_*\Omega^3S^4$ arise from $\eta,\nu$ and the inclusion of the bottom cell $S^1\to\Omega^3S^4$. Moreover, the only spherical classes in $H_*\Omega^iS^4$ with $1\leqslant i<3$ arise from the inclusion of the bottom cell and $\nu$.
\end{lmm}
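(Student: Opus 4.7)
The plan is to exploit the James splitting $\Omega S^4 \simeq S^3 \times \Omega S^7$, which holds at the prime $2$ because $\nu$ has Hopf invariant one, and which after looping twice yields a product decomposition $\Omega^3 S^4 \simeq \Omega^2 S^3 \times \Omega^3 S^7$. A map $f: S^n \to \Omega^3 S^4$ then projects to maps $f_1: S^n \to \Omega^2 S^3$ and $f_2: S^n \to \Omega^3 S^7$, and the K\"unneth theorem (together with $n > 0$) shows that $f_*$ is nontrivial precisely when at least one of $f_{1*}$, $f_{2*}$ is nontrivial. This reduces the computation to the two factors, both of which have been treated before.

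For the first factor, the earlier theorem has already pinned down the spherical classes of $H_*\Omega^2 S^3$ in dimensions $1$ and $2$ (the bottom cell and the class arising from $\Sigma\eta$). For the second factor, Theorem \ref{main} gives only the bottom cell $S^3 \to \Omega^3 S^7$ as spherical, since the relevant lattice entry falls in the case $b-a=4$ with $a<4$ and $b=7$. I would then interpret these under the inclusions into $\Omega^3 S^4$: the dimension $1$ class coincides with the bottom cell $S^1 \to \Omega^3 S^4$ (both have adjoint the identity of $S^4$), the dimension $2$ class yields a spherical class detected stably by $\eta$, and the bottom cell of $\Omega^3 S^7$ pushes forward through $\Omega^3 \nu$ to a spherical class detected stably by $\nu$.

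For the claim about $\Omega^i S^4$ with $1 \leqslant i < 3$, I would argue by adjointing down and using that homology suspension $\sigma_*$ annihilates Pontrjagin decomposables. The dimension $2$ class detected by $\eta$ factors through $\Omega S^2$ via a loop map $\Omega S^2 \to \Omega^3 S^4$; the Hurewicz image of $\eta: S^2 \to \Omega S^2$ is the Pontrjagin square $x_1^2$, and since a loop map is an $H$-map on Pontrjagin products, this image remains a square in $H_*\Omega^3 S^4$. Consequently it vanishes under $\sigma_*: H_*\Omega^3 S^4 \to H_{*+1}\Omega^2 S^4$, and $\eta$ contributes no spherical class after adjointing down. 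The bottom cell and the class detected by $\nu$, on the other hand, do persist under the adjointing-down maps.

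The main obstacle is the last step: making sure that the dimension $2$ spherical class in $H_*\Omega^3 S^4$ actually lies in the image of Pontrjagin squaring in the full product ring, and not only in the subalgebra coming from $\Omega S^2$. This is what forces $\eta$ out of all smaller loop spaces, and it depends on the fact that the comparison map from $\Omega S^2$ is a loop map, hence a morphism of Pontrjagin rings. Everything else is bookkeeping with the splitting and the already-established tables.
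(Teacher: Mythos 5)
Your proposal is correct and follows essentially the same route as the paper: loop the splitting $\Omega S^4\simeq S^3\times\Omega S^7$ twice to get $\Omega^3S^4\simeq\Omega^2S^3\times\Omega^3S^7$, read off the spherical classes of each factor from the earlier results (bottom cell and $\eta$ for $\Omega^2S^3$, bottom cell pushed through $\Omega^3\nu$ giving $\nu$ for $\Omega^3S^7$), and eliminate $\eta$ from $\Omega^iS^4$ with $i<3$ because its Hurewicz image is a Pontrjagin square, preserved by the loop map from $\Omega S^2$ and hence killed by homology suspension. The paper's argument is exactly this, stated slightly more tersely.
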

A similar reasoning as above, using the decomposition $\Omega S^8\simeq S^7\times\Omega S^{15}$, proves the following.
\begin{lmm}
The only spherical classes in $H_*\Omega^5S^8$ arise from $\sigma$, $\nu$ and the inclusion of the bottom cell $S^3\to\Omega^5S^8$. Moreover, the only spherical classes in $H_*\Omega^iS^4$ with $1\leqslant i<5$ arise from the inclusion of the bottom cell and $\sigma$.
\end{lmm}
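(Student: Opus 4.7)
My plan is to adapt the proof of the preceding lemma by substituting $\sigma$ for $\nu$. The starting point is the James splitting $\Omega S^8 \simeq S^7 \times \Omega S^{15}$, which exists because $H(\sigma)=1$ for the second James--Hopf map $H:\Omega S^8\to \Omega S^{15}$; a choice of splitting is $E + \Omega\sigma: S^7\times \Omega S^{15}\to \Omega S^8$. Applying $\Omega^4(-)$ yields a homotopy equivalence
\[
\Omega^5 S^8 \;\simeq\; \Omega^4 S^7 \times \Omega^5 S^{15}.
\]
Since for any based map $(f_1,f_2):S^n\to X\times Y$ the Hurewicz image decomposes as $(f_1)_*\iota_n\otimes 1 + 1\otimes (f_2)_*\iota_n$, spherical classes in a product correspond to (possibly trivial) spherical classes in each factor; this reduces the problem to identifying spherical classes in each of the two factors.

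I then invoke parts (ii) and (iii) of the preceding theorem. Part (ii) gives that the only spherical classes in $H_*\Omega^4 S^7$ come from the bottom cell $S^3\to \Omega^4 S^7$ and from $\Sigma^3\nu : S^6\to \Omega S^4\to \Omega^4 S^7$. Part (iii), applied with $i=5<8$, gives that the only spherical class in $H_*\Omega^5 S^{15}$ is the bottom cell $S^{10}\to\Omega^5 S^{15}$. Transporting these through the splitting produces exactly three candidates in $H_*\Omega^5 S^8$: the bottom cell $S^3\to \Omega^5 S^8$, the $\nu$-detected class in dimension $6$ coming from the $\Omega^4 S^7$ factor, and the $\sigma$-detected class in dimension $10$ obtained by composing the bottom cell of $\Omega^5 S^{15}$ with $\Omega^5\sigma$. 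This settles the first statement.

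For the second statement, let $f:S^n\to\Omega^i S^8$ be a spherical class with $1\leqslant i<5$. Iterated loop-space adjunction produces $\bar f:S^{n-(5-i)}\to\Omega^5 S^8$ whose Hurewicz image is related to $f_*$ by $5-i$ applications of the homology suspension $\sigma_*$; in particular $\bar f$ is itself spherical. By the first part, $\bar f_*$ must be the bottom cell, the $\nu$-class, or the $\sigma$-class. The key observation ruling out $\nu$ is that the $\nu$-detected class in $H_6\Omega^5 S^8$ is the square $(g_3)^2$: indeed, $\nu:S^6\to \Omega S^4$ has Hurewicz image $g_3^2$, and this square is preserved under the loop (hence $H$-) maps $\Omega S^4\to \Omega^4 S^7\to \Omega^5 S^8$. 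Since squares in the Pontrjagin ring are killed by homology suspension, the $\nu$ option forces $f_*=0$, contradicting sphericity. Hence $f$ arises either from the bottom cell of $\Omega^i S^8$ or from $\sigma\in \pi_{15}S^8$, as claimed.

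The step I expect to require the most care is verifying that the $\nu$-detected class in $H_6\Omega^5 S^8$ genuinely equals the decomposable square $(g_3)^2$ rather than some indecomposable Kudo--Araki class of the same degree; this amounts to tracing the spherical class $\nu:S^6\to \Omega^4 S^7$ through the product splitting and checking that the intermediate structure maps between the iterated loop spaces involved really are $H$-maps, so that Pontrjagin squaring is preserved throughout.
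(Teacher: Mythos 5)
Your proposal is correct and follows essentially the same route the paper intends: the paper's proof is only the one-line remark that ``a similar reasoning as above, using the decomposition $\Omega S^8\simeq S^7\times\Omega S^{15}$'' applies, and you have carried out exactly that reasoning --- looping the splitting to $\Omega^5S^8\simeq\Omega^4S^7\times\Omega^5S^{15}$, quoting parts (ii) and (iii) of the earlier theorem for the factors, and ruling out $\nu$ for $i<5$ because its Hurewicz image is the Pontrjagin square $g_3^2$, which is killed by homology suspension. You also correctly read the statement's ``$\Omega^iS^4$'' as a typo for ``$\Omega^iS^8$''.
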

Next, note that as the suspension $\Omega^3S^4\to\Omega^7S^8$ induces a monomorphism, hence we get spherical classes given by the inclusion of the bottom cell, $\eta$ and $\nu$. We also obtain, a spherical class given $\sigma$ by adjointing down the spherical class given by $\sigma$ in $\Omega^5S^8$.\\

\section{More homology}
It is possible to use more detailed description of homology in order to eliminate more classes from being spherical. A spherical class $\xi\in\widetilde{H}_*X$ has some basic properties: (1) it is primitive in the coalgebra $\widetilde{H}_*X$, (2) $Sq^i_*\xi=0$ for all $i>0$ where $Sq^i_*:\widetilde{H}_*X\to \widetilde{H}_{*-i}X$ is the operation induced by $Sq^i:\widetilde{H}^*X\to \widetilde{H}^{*+i}X$ by the vector space duality \cite[Lemma 6.2]{AsadiEccles}.

\subsection{Spherical classes in $H_*\Omega S^{n+1}$}
Our previous geometric arguments, completely determine spherical classes in $\Omega S^{n+1}$ for $n\leqslant 14$. The following theorem deals with the other cases.

\begin{thm}\label{singleloop}
For $n\geqslant 8$, the only spherical class in $H_*\Omega S^{n+1}$ is given by the inclusion of the bottom cell $S^n\to\Omega S^{n+1}$.
\end{thm}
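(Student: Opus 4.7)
The plan is to first classify candidates via primitivity and then apply iterated James-Hopf invariants to reduce to Adams' Hopf invariant one theorem.

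Recall $H_*\Omega S^{n+1}\cong(\Z/2)[a_n]$ as a Pontrjagin algebra, where $a_n$ is the Hurewicz image of the bottom cell inclusion and is primitive. In a polynomial algebra over $\Z/2$ on one primitive generator the primitives are precisely the Frobenius powers $a_n^{2^k}$, $k\geqslant 0$. Since every spherical class is primitive, any spherical class in $H_*\Omega S^{n+1}$ has this form; the case $k=0$ corresponds to the bottom cell, and it remains to rule out $a_n^{2^k}$ with $k\geqslant 1$ for $n\geqslant 8$.

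Suppose $f:S^{2^k n}\to\Omega S^{n+1}$ satisfies $f_*[S^{2^k n}]=a_n^{2^k}$ with $k\geqslant 1$, and compose with the second James-Hopf map $H:\Omega S^{n+1}\to\Omega S^{2n+1}$. The crucial homological fact is
\[
H_*(a_n^{2r})=a_{2n}^{r},\qquad H_*(a_n^{2r+1})=0,
\]
which can be verified via the $2$-primary James fiber sequence $S^n\to\Omega S^{n+1}\to\Omega S^{2n+1}$: its Serre spectral sequence collapses for dimensional reasons, and on $H^*$ the class dual to $a_{2n}^j$ pulls back to the class dual to $a_n^{2j}$; dualising gives the displayed formula. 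Equivalently, one can use the James splitting $\Sigma\Omega S^{n+1}\simeq\bigvee_q S^{qn+1}$ and the identification of $\Sigma H$ with projection onto the $q=2$ summand. Consequently $Hf$ is spherical with Hurewicz image $a_{2n}^{2^{k-1}}$, reducing the case $(n,k)$ to $(2n,k-1)$. Iterating $k-1$ times yields a spherical map $g:S^{2^k n}\to\Omega S^{2^{k-1}n+1}$ with $g_*[S^{2^k n}]=a_{2^{k-1}n}^2$, whose adjoint $\tilde g:S^{2^k n+1}\to S^{2^{k-1}n+1}$ has classical Hopf invariant one. Adams' theorem then forces $2^{k-1}n\in\{1,3,7\}$, contradicting $n\geqslant 8$ and $k\geqslant 1$.

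The main obstacle is the James-Hopf homology formula rather than either of the basic obstructions highlighted in Section 6. Indeed, a Nishida-relation computation in $H_*QS^n$ combined with Proposition \ref{homology-injec} shows $Sq^i_* a_n^{2^k}=0$ for all $i>0$ and $k\geqslant 0$, so the classes $a_n^{2^k}$ are simultaneously primitive and Steenrod-trivial and cannot be separated from the bottom cell by those obstructions alone. The James-Hopf iteration, which converts the question into one settled by Adams' theorem, supplies the missing leverage.
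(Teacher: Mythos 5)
Your proof is correct, but it settles the main case by a genuinely different mechanism than the paper. Both arguments agree on the first half: by Milnor--Moore primitivity the only candidates are $x_n^{2^t}$, and the case $t=1$ is killed by the unstable Hopf invariant interpretation of $h(f)=x_n^2$ together with Adams' theorem. For $t>1$ the paper stabilises to $QS^n$, writes $x_n^{2^t}=Q^{2^{t-1}n}\cdots Q^nx_n$, adjoints down once to $QS^{n-1}$ (using an odd-dimensionality argument to kill the decomposable correction term), and then derives a contradiction from $Sq^1_*\neq 0$ via the Nishida relations. You instead stay unstable and iterate the second James--Hopf map, using $(H_2)_*(a_n^{2r})=a_{2n}^r$ mod $2$ to reduce $a_n^{2^t}$ to a genuine Hopf-invariant-one class $a_{2^{t-1}n}^2$ on a higher sphere, so that Adams' theorem finishes all $t\geqslant 1$ uniformly. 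Your key homological input is correct and is best justified exactly as you indicate, by the collapsing Serre spectral sequence of the $2$-local James fibration $S^n\to\Omega S^{n+1}\stackrel{H}{\to}\Omega S^{2n+1}$, which forces $H_*$ to be an isomorphism in degrees $2rn$; the James-splitting description alone only exhibits the $r=1$ case directly, since the projection $\Sigma\Omega S^{n+1}\to S^{2n+1}$ vanishes on $\Sigma(a_n^{2r})$ for $r>1$, so keep the fibration argument as the primary justification. The trade-off: your route is shorter, avoids the Dyer--Lashof and Nishida machinery, and leans entirely on Adams' theorem, while the paper's stable adjoint-plus-Nishida computation is the template that it reuses for Proposition \ref{2^t} and for the double and triple loop space theorems, where no analogous single-generator James--Hopf iteration is available. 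Your closing observation that $Sq^i_*x_n^{2^t}=0$ for all $i>0$ in $H_*\Omega S^{n+1}$ itself is also accurate and explains why both proofs must first move the class somewhere else (to $QS^{n-1}$, respectively to $\Omega S^{2^{t-1}n+1}$) before an obstruction appears.
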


\begin{proof}
The cases $n+1\leqslant 15$ is verified by previous computations. But, our method here works generally for all $n\geqslant 8$.\\

The inclusion of the bottom cell $S^n\to\Omega S^{n+1}$ is obviously nontrivial in homology. Suppose $S^i\to \Omega S^{n+1}$ with $i>n$. By James's description, $H_*\Omega S^{n+1}\simeq T(x_n)$ with $x_n\in H_n\Omega S^{n+1}$ coming from the inclusion of the bottom cell. For $i>n$, if $h(f)\neq 0$ then $h(f)=x_n^k$ is a decomposable. By \cite[Propostion 4.21]{MM} $h(f)$ as a primitive class which is a decomposable, must be a square, so $h(f)=x_n^{2k}$.\\
\textbf{Case $k=1$.} Working with $\Z/2$-homology, if $h(f)=x_n^2$ then its adjoint $S^{i+1}\to S^{n+1}$ has to be detected by the unstable $\Z/2$-Hopf invariant \cite[Proposition 6.1.5]{Harper}. By the Hopf invariant one result, this requires $n+1\in\{2,4,8\}$. This is not possible as $n\geqslant 8$.\\
\textbf{Case $k>1$.} An inductive argument can be employed to show that in a Hopf algebra over $\Z/2$ for a primitive element $x$, $x^i$ is primitive if and only if $i$ is a power of $2$. The element $x_n\in H_n\Omega S^{n+1}$ is primitive, so $h(f)=x_n^{2^t}$ for some $t>1$.\\
Compose $f$ with the stablisation map $E^\infty:\Omega S^{n+1}\to QS^n$. Then,
$$h(E^\infty f)=x_n^{2^t}=Q^{2^{t-1}n}\cdots Q^{2n}Q^nx_n.$$
Let $\widetilde{E^\infty f}:S^{i-1}\to QS^{n-1}$ be the adjoint of $E^\infty f$ with $h(\widetilde{E^\infty f})\neq 0$. By a diagram chase,
$$h(\widetilde{E^\infty f})=Q^{2^{t-1}n}\cdots Q^{2n}Q^nx_{n-1}+D$$
where $D$ is a sum of decomposable terms. It is known that for a primitive class $\xi$, $Q^i\xi$ is also primitive. Consequently, the first term in the above sum is primitive. This implies that $D$ also has to be primitive, and being decomposable it has to be a square, ie, a class of even dimension. But, $\widetilde{E^\infty f}$ is odd dimensional, hence $D=0$ which means $h(\widetilde{E^\infty f})=Q^{2^{t-1}n}Q^{2^{t-2}n}\cdots Q^{2n}Q^nx_{n-1}$ with being a class of dimension $2^{t-1}n-1$. By Nishida relations
$$Sq^1_*Q^{2^{t-1}n}Q^{2^{t-2}n}\cdots Q^{2n}Q^nx_{n-1}=Q^{2^{t-1}n-1}Q^{2^{t-2}n}\cdots Q^{2n}Q^nx_{n-1}=(Q^{2^{t-2}n}\cdots Q^{2n}Q^nx_{n-1})^2\neq 0.$$
But is in contradicts with the fact that $h(\widetilde{E^\infty f})$ has to be killed by $Sq^1_*$. This completes the proof.
\end{proof}

The method of eliminating $x_n^{2^t}$, with $t>1$, from being spherical can be used to prove a more general fact for a wider range of loop spaces. We record the following and leave the proof to the interested reader.

\begin{prp}\label{2^t}
Suppose $f:S^i\to\Omega^nS^{d+n}$ is given with $1\leqslant n\leqslant +\infty$ and $d\geqslant 1$ so that $h(f)=\xi^2$. Then, $\xi$ has to be a class of odd dimension. In particular, if $h(f)=x_d^{2^t}$ then $t=1$ and $x_d$ has to be odd dimension, ie, $d$ has to be odd.
\end{prp}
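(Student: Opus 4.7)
The plan is to imitate the strategy of Theorem \ref{singleloop}: stabilise $f$ to $QS^d$, pass to the one-step adjoint $\widetilde{E^\infty f}\colon S^{i-1}\to QS^{d-1}$, pin down its Hurewicz image exactly using primitivity and a parity argument, and then derive a contradiction with $Sq^1_*\circ h=0$ via a single application of Nishida.

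By Proposition \ref{homology-injec}, $E^\infty\colon\Omega^nS^{d+n}\to QS^d$ is injective in homology, so $h(E^\infty f)=\xi^2\neq 0$ in $H_*QS^d$. The adjoint relation gives $\sigma_*h(\widetilde{E^\infty f})=\xi^2$, and since $\sigma_*$ lands in the primitives of $H_*QS^d$, the class $\xi^2$ is primitive; in characteristic $2$, primitives of the polynomial algebra $H_*QS^d$ are spanned by Frobenius powers of Dyer--Lashof generators, so $\xi=\sum c_\alpha(Q^{I_\alpha}x_d)^{2^{k_\alpha}}$ is itself a sum of such Frobenius powers, each of dimension $\dim\xi$. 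Assume for contradiction that $\dim\xi$ is even. Then $h(\widetilde{E^\infty f})$ is a spherical, hence primitive, class of \emph{odd} dimension $2\dim\xi-1$ in $H_*QS^{d-1}$; since in odd dimensions only indecomposable generators can be primitive, matching against $\xi^2$ under $\sigma_*$ (which acts on indecomposables by $Q^Ix_{d-1}\mapsto Q^Ix_d$) pins down
\[
h(\widetilde{E^\infty f})=\sum c_\alpha Q^{(\dim\xi,\,M_\alpha)}x_{d-1},
\]
where $Q^{M_\alpha}x_d$ is the admissible realisation of $(Q^{I_\alpha}x_d)^{2^{k_\alpha}}$ inside $H_*QS^d$.

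Applying $Sq^1_*$: since $\dim\xi$ is even, Nishida gives $Sq^1_*Q^{\dim\xi}=Q^{\dim\xi-1}$, and since $Q^{M_\alpha}x_{d-1}$ has dimension $\dim\xi-1$, the Kudo--Araki identity $Q^{\dim y}y=y^2$ yields
\[
Sq^1_*h(\widetilde{E^\infty f})=\sum c_\alpha(Q^{M_\alpha}x_{d-1})^2=\widetilde{\xi}^{\,2}\neq 0,
\]
where $\widetilde{\xi}=\sum c_\alpha Q^{M_\alpha}x_{d-1}$ is nonzero because the $Q^{M_\alpha}x_{d-1}$ are distinct polynomial generators of $H_*QS^{d-1}$ (each has excess $d>d-1$). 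This contradicts the vanishing of $Sq^1_*$ on spherical classes, so $\dim\xi$ must be odd. The ``in particular'' clause follows since $h(f)=x_d^{2^t}=(x_d^{2^{t-1}})^2$ identifies $\xi=x_d^{2^{t-1}}$ of dimension $2^{t-1}d$, whose oddness forces $t=1$ and $d$ odd. The main obstacle is the precise identification of $h(\widetilde{E^\infty f})$: one must rule out indecomposable correction terms (by coefficient-matching under $\sigma_*$) and decomposable correction terms (by the primitivity vs.\ parity argument of Theorem \ref{singleloop}); once this identification is in hand, the Nishida computation is forced.
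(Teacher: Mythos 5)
Your argument is correct and is precisely the method the paper intends: it generalises the Case $k>1$ step in the proof of Theorem \ref{singleloop} (stabilise, adjoint down once, use primitivity together with the odd-dimension constraint to kill all decomposable and higher Frobenius-power terms, then apply $Sq^1_*Q^{2m}=Q^{2m-1}$ to produce a nonzero square), which is exactly the argument the paper points the reader to when it leaves this proposition as an exercise. The one caveat is the edge case $d=1$: there the adjoint lands in $Q_0S^0$, whose polynomial generators $Q^I\zeta_i$ are \emph{not} primitive, so the step that pins down $h(\widetilde{E^\infty f})$ as a pure sum of generators with no decomposable correction needs extra justification --- but this case is never invoked in the paper, and the ``in particular'' clause is automatic there since $d=1$ is already odd.
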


\subsection{Spherical classes in $H_*\Omega^2S^{n+2}$}

Similar to the case of $\Omega S^{n+1}$, we wish to show that beyond a range, it is impossible to have spherical classes in $H_*\Omega^2S^{n+2}$ other that the ones given by the inclusion of the bottom cell $S^n\to\Omega^2S^{n+2}$. Our result in this direction reads as the following.

\begin{thm}\label{doubleloop}
Suppose $n\geqslant 7$, then there is no spherical class in $H_*\Omega^2S^{n+2}$ other than the class given by the inclusion of the bottom cell $S^n\to\Omega^2S^{n+2}$.
\end{thm}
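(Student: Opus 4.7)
The plan is to catalog the primitive classes in $H_*\Omega^2 S^{n+2}$ of degree $>n$ and eliminate each candidate via Steenrod operations, James--Hopf reductions, and Freudenthal pullback to the single loop space. Using the Kudo--Araki description from Section~3, $H_*\Omega^2 S^{n+2}$ is a polynomial algebra over $\F_2$ on the primitive generators $g_n, Q_1 g_n, Q_1^2 g_n, \ldots$, of dimensions $n$ and $2^r(n+1)-1$ for $r\geq 1$. A spherical class is primitive (by Milnor--Moore); in a primitively generated commutative Hopf algebra over $\F_2$ the primitives are $\F_2$-spanned by the $2^k$-th powers of the generators, and Proposition~\ref{2^t} restricts $k$ to $0$ or $1$. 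Since the dimensions $2^r(n+1)-1$ and $2^{s+1}(n+1)-2$ are always distinct in our range, each total degree contains at most one candidate, so $h(f)$ must be either $Q_1^r g_n$ for some $r\geq 1$ or $(Q_1^r g_n)^2$ for some $r\geq 0$.

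For the indecomposable primitives $Q_1^r g_n$ with $r\geq 2$, I would stabilise along the monomorphism $H_*\Omega^2S^{n+2}\hookrightarrow H_*QS^n$ of Proposition~\ref{homology-injec} and rewrite $Q_1^r g_n=Q^{2^{r-1}(n+1)}\cdots Q^{n+1}g_n$ in upper-indexed form. Because the outer exponent $2^{r-1}(n+1)$ is even, Nishida's relation yields $Sq^1_* Q_1^r g_n = Q^{2^{r-1}(n+1)-1}(Q_1^{r-1}g_n) = (Q_1^{r-1}g_n)^2 \neq 0$, which is incompatible with sphericality. Applying the Cartan identity $Sq^{2j}_*(x^2) = (Sq^j_*x)^2$ extends this to kill $(Q_1^r g_n)^2$ for all $r\geq 2$, and also $(Q_1 g_n)^2$ when $n$ is odd (in which case $Sq^1_* Q_1 g_n = g_n^2$).

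The remaining geometric cases are $Q_1 g_n$, $g_n^2$, and $(Q_1 g_n)^2$ with $n$ even. For $Q_1 g_n$ I apply $\Omega H:\Omega^2 S^{n+2}\to\Omega^2 S^{2n+3}$, the loop of the second James--Hopf; a comparison in the Serre spectral sequence of the looped James fibration $\Omega S^{n+1}\to\Omega^2 S^{n+2}\to\Omega^2 S^{2n+3}$ identifies $(\Omega H)_* Q_1 g_n$ with the bottom class $g_{2n+1}$, so $\Omega H\circ f:S^{2n+1}\to\Omega^2 S^{2n+3}$ is an odd multiple of the identity and its double adjoint $S^{2n+3}\to S^{n+2}$ has odd Hopf invariant -- forbidden by Adams's theorem in the given range. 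For $g_n^2$, Freudenthal yields the surjection $\pi_{2n+1}S^{n+1}\twoheadrightarrow\pi_{2n+2}S^{n+2}$, so $f$ lifts through the homology monomorphism $\Omega E:\Omega S^{n+1}\to\Omega^2 S^{n+2}$ (which carries $x_n^2$ to $g_n^2$) to some $\bar f:S^{2n}\to\Omega S^{n+1}$ with $h(\bar f)=x_n^2$; Theorem~\ref{singleloop} then finishes the case. The class $(Q_1 g_n)^2$ with $n$ even is handled by the same James--Hopf formula extended to $(\Omega H)_* Q_1^r g_n = Q_1^{r-1} g_{2n+1}$, which reduces it to a spherical $g_{2n+1}^2$ in $H_*\Omega^2 S^{2n+3}$, already excluded by the previous step.

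The principal obstacle, as in Theorem~\ref{singleloop}, is the James--Hopf homology calculation. Because $H$ is not a loop map, $\Omega H$ is only a single loop map, and $(\Omega H)_*$ does not commute with the Kudo--Araki operation $Q_1$ by formal naturality. Establishing the edge-homomorphism identity $(\Omega H)_* Q_1^r g_n = Q_1^{r-1} g_{2n+1}$ requires a careful bidegree-by-bidegree analysis of the Serre spectral sequence of the looped James fibration, after which the remaining case work above closes the argument.
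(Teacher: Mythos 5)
Your strategy---identify the primitives of the polynomial Hopf algebra $H_*\Omega^2S^{n+2}$ with the $2^k$-th powers of the generators $Q_1^rg_n$, cut $k$ down to $0$ or $1$ by Proposition \ref{2^t}, and eliminate each candidate by Nishida relations or a James--Hopf/Hopf-invariant-one argument---is sound and lands on the same case list as the paper, but the two proofs are organised differently. The paper (Lemma \ref{doubleloop-1}) first adjoints up once and invokes Theorem \ref{singleloop} to conclude that $h(f)$ dies under homology suspension, hence is a decomposable primitive, hence a square; since $\sigma_*Q_1^rg_n=x_{n+1}^{2^r}\neq 0$, this disposes of all your indecomposable candidates at once, whereas you kill them separately with $Sq^1_*$ (for $r\geqslant 2$) and a James--Hopf argument (for $r=1$); both routes work. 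For the final case $(Q^{n+1}g_n)^2$ with $n$ even, the paper (Lemma \ref{doubleloop-2}) computes $(\Omega H)_*$ through the stable James--Hopf map $j_2:QS^{n+1}\to Q\Sigma^{n+1}P_{n+1}$, the Snaith splitting and Kuhn's compatibility theorems, and then applies $H$ once more to force an impossible $Sq^{2n+3}$-detection; you instead reduce to $g_{2n+1}^2$ being spherical in $H_*\Omega^2S^{2n+3}$ and then, via Freudenthal, to a Hopf invariant one map $S^{4n+3}\to S^{2n+2}$, which is equally valid and somewhat more elementary. The step you flag as the principal obstacle, $(\Omega H)_*Q_1g_n=g_{2n+1}$, needs no Serre spectral sequence: $H_{2n+1}\Omega^2S^{2n+3}$ is spanned by $g_{2n+1}$ alone, and naturality of the homology suspension gives $\sigma_*(\Omega H)_*Q_1g_n=H_*\sigma_*Q_1g_n=H_*(x_{n+1}^2)=x_{2n+2}\neq 0$, which forces the identity. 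The extension to $(\Omega H)_*Q_1^rg_n$ for $r\geqslant 2$ is not needed, since $(\Omega H)_*$ is an algebra map and those cases are already killed by Steenrod operations.

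One genuine soft spot, which your argument shares with the stated range of the theorem: your treatment of $g_n^2$ lifts $f$ to $\bar f:S^{2n}\to\Omega S^{n+1}$ and quotes Theorem \ref{singleloop}, which is only available for $n\geqslant 8$. At $n=7$ the class $g_7^2\in H_{14}\Omega^2S^9$ really is spherical: $\sigma\in\pi_{16}S^9$ desuspends to the Hopf invariant one map $S^{15}\to S^8$, whose adjoint has Hurewicz image $x_7^2$, and composing with the homology monomorphism $\Omega E:\Omega S^8\to\Omega^2S^9$ gives $h(\sigma)=g_7^2\neq 0$, consistently with the entry for $(2,9)$ in the table of Theorem \ref{main}. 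So your proof, like the paper's, actually establishes the result for $n\geqslant 8$, and the case $n=7$ must be excepted (or referred back to Theorem \ref{main}); you should state this restriction explicitly rather than claim the full range $n\geqslant 7$.
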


As usual, it is obvious that the inclusion of the bottom cell gives a spherical class. So, we focus on the other cases. We divide the proof into small lemmata.

\begin{lmm}\label{doubleloop-1}
Suppose $f:S^i\to\Omega^2S^{n+2}$ with $i>n$ and $n\geqslant 7$. Then, written in upper indexed Kudo-Araki operations, we have
$$h(f)=(Q^{n+1}x_n)^2.$$
Consequently, it is impossible to have such an $f$ if $n$ is odd.
\end{lmm}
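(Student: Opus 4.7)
The plan is to identify the primitive elements of $H_*\Omega^2 S^{n+2}$ in the degree of $h(f)$ and then eliminate every candidate other than $(Q^{n+1}x_n)^2$ using Proposition \ref{2^t}, Nishida relations, the Cartan formula, and the Hopf-invariant-one theorem. By Proposition \ref{homology-injec} and the Kudo--Araki description,
\[
H_*\Omega^2 S^{n+2} \cong \F_2[\,x_n,\ Q_1 x_n,\ Q_1^2 x_n,\ \ldots\,],
\]
is a polynomial algebra on the primitive generators $Q_1^k x_n$ of dimension $2^k(n+1)-1$. By a standard argument for $\F_2$-Hopf algebras, every primitive is an $\F_2$-linear combination of $2^a$-th powers of these generators, so (since $i>n$) I can write $h(f)$ as a sum of terms $(Q_1^k x_n)^{2^a}$ with $(k,a)\neq(0,0)$ in matching degree.

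Next I would knock out the non-target candidates. Writing $Q_1^k x_n=Q^{2^{k-1}(n+1)}(Q_1^{k-1}x_n)$, for $k\geq 2$ the upper index $2^{k-1}(n+1)$ is even; combined with $Sq^1_* Q^{2d}=Q^{2d-1}$ and $Q^{\dim y}y=y^2$, this gives
\[
Sq^1_* Q_1^k x_n=(Q_1^{k-1}x_n)^2\neq 0 \qquad (k\geq 2),
\]
contradicting sphericality. Together with the Cartan formula $Sq^{2t}_*\eta^2=(Sq^t_*\eta)^2$ one also gets $Sq^2_*(Q_1^k x_n)^2=(Q_1^{k-1}x_n)^4\neq 0$ for $k\geq 2$, eliminating these squares. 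Proposition \ref{2^t} rules out $(Q_1^k x_n)^{2^a}$ for $a\geq 2$ since the base would need odd dimension, forcing $a=1$. Applied to $x_n^{2^a}$, Proposition \ref{2^t} forces $a=1$ and $n$ odd, after which the stable adjoint $E^\infty f\in\pi_n^s$ with Hurewicz image $x_n^2\in H_*QS^n$ is excluded by Adams' Hopf-invariant-one theorem in the range $n\geq 7$.

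The hardest case is the non-squared class $h(f)=Q_1 x_n=Q^{n+1}x_n$: for $n$ even, Nishida shows every $Sq^r_* Q^{n+1}x_n$ vanishes (the sole surviving term $\binom{n}{1}Q^n x_n$ has coefficient $n\equiv 0\ (\mathrm{mod}\ 2)$, and the terms for $r\geq 2$ vanish by excess), so Steenrod operations alone cannot detect this class. I would instead pass to the single adjoint $\tilde{f}:S^{2n+2}\to\Omega S^{n+2}$ and use the Kudo--Araki/homology-suspension compatibility $\sigma_* Q_1 \xi=(\sigma_*\xi)^2$ to conclude $h(\tilde{f})=g_{n+1}^2$. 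By James's identification of $g_{n+1}^2$ with Hopf invariant one, the double adjoint $\tilde{\tilde{f}}:S^{2n+3}\to S^{n+2}$ would be a classical Hopf-invariant-one element, which by Adams requires $n+2\in\{2,4,8\}$, contradicting $n\geq 7$.

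Having narrowed $h(f)$ to $(Q^{n+1}x_n)^2$, the ``Consequently'' assertion is immediate: if $n$ is odd then $n+1$ is even, Nishida gives $Sq^1_* Q^{n+1}x_n=Q^n x_n=x_n^2$, and the Cartan formula yields
\[
Sq^2_* h(f)=(Sq^1_* Q^{n+1}x_n)^2=x_n^4\neq 0,
\]
contradicting $Sq^r_* h(f)=0$ for $r>0$; hence no such $f$ exists for $n$ odd.
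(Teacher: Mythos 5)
Your overall strategy is sound and runs parallel to the paper's: both arguments reduce to classifying the primitives of the polynomial algebra $H_*\Omega^2S^{n+2}\simeq\Z/2[Q_1^kx_n]$ and eliminating candidates with Proposition \ref{2^t}, the Nishida relations, the Cartan formula and the Hopf invariant one theorem. The organisational difference is that the paper first invokes Theorem \ref{singleloop} on the adjoint $S^{i+1}\to\Omega S^{n+2}$ to force $h(f)$ to be a square before analysing it, whereas you classify all primitives up front and dispose of the non-square candidates $Q_1^kx_n$ directly ($Sq^1_*$ for $k\geqslant 2$; the adjoint-plus-Hopf-invariant argument for $k=1$). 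That part of your argument is correct, and arguably more explicit than the paper's.

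The gap is in your treatment of $h(f)=x_n^2$. You claim this is ``excluded by Adams' Hopf-invariant-one theorem in the range $n\geqslant 7$'', but detection of the stable adjoint by $Sq^{n+1}$ is ruled out only when $n+1\notin\{2,4,8\}$, i.e.\ only for $n\geqslant 8$ in your range. At $n=7$ the case is actually realised: $\sigma\in\pi_7^s$ is detected by $Sq^8$, desuspends to $\pi_{16}S^9\simeq\pi_{14}\Omega^2S^9$, and the resulting map $S^{14}\to\Omega^2S^9$ has Hurewicz image $x_7^2$, which is not of the form $(Q^{n+1}x_n)^2$. So the lemma as stated is false at $n=7$ --- consistently with Theorem \ref{main}, whose table records $\sigma$ as a spherical class in $H_*\Omega^2S^9$. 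The paper's own proof has the same blind spot, hidden in the line ``since $\dim(\xi)=i>n$, hence $J$ is nonempty'': for $h(f)=\xi^2$ one has $\dim\xi=i/2$, which equals $n$ exactly when $i=2n$, so $\xi=x_n$ is not excluded there either. Both your argument and the paper's become correct once the hypothesis is strengthened to $n\geqslant 8$ (where $Sq^{n+1}$ decomposes); alternatively the case $(n,i)=(7,14)$ with $h(f)=x_7^2$ must be listed as an exception.
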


\begin{proof}
For $n\geqslant 7$, by Lemma \ref{singleloop} the adjoint of $f:S^{i+1}\to\Omega S^{n+2}$ must have zero Hurewicz image. This implies that $h(f)\in H_*\Omega^2S^{n+2}$ must be a square. Applying Lemma \ref{2^t}, this implies that $h(f)$ has to be square of an odd dimensional class. Recall that
$$H_*\Omega^2S^{n+2}\simeq\Z/2[Q_{j_1}\cdots Q_{j_s}x_n:0\leqslant j_1\leqslant j_2\leqslant\cdots\leqslant j_s\leqslant 1]$$
where $Q_0$ is the squaring operation, and we may compute that in terms of the upper indexed operations, we have
$$\underbrace{Q_1\cdots Q_1}_{t-\textrm{times}}x_n=Q^{2^{s-1}(n+1)}\cdots Q^{4n+4}Q^{2n+2}Q^{n+1}x_n.$$
For $h(f)=\xi^2$ with $\xi\in H_*\Omega^2S^{n+2}$ being an odd dimensional class, we may write
$$\xi=\sum Q_Jx_n+D$$
where the sum ranges over certain sequences $J$ and $D$ is a sum of decomposable terms. Since $D$ is a primitive decomposable living in odd dimensions, so $D=0$. Since $\dim(\xi)=i>n$, hence $J$ is nonempty. According to our description, $J=(j_1,\ldots,j_s)$ with $0\leqslant j_1\leqslant\cdots\leqslant j_s\leqslant 1$. If, there is a $J$ with $j_1=0$, this implies that the inside of the bracket is of even dimension, which contradicts Lemma \ref{2^t}. Hence, the sum has to run over all sequences $J$ which is a sequences of $1$'s. If $l(J)=s>1$, this means that we have terms such as $Q^{2^{s-1}(n+1)}\cdots Q^{4n+4}Q^{2n+2}Q^{n+1}x_n$ as terms of $\xi$ where various $J$'s only differ by their length, that is
$$\xi=\sum_{i=1}^{l} Q^{2^{s_i-1}(n+1)}\cdots Q^{4n+4}Q^{2n+2}Q^{n+1}x_n$$
for some positive integer $l$ where $s_i$ is the length of a sequence $J$ appearing in the expression for $\xi$ and $i\neq j$ implies that $s_i\neq s_j$. Application of Nishida relations preserves length, by definition. Suppose, there is a $J_0$ with $l(J_0)>1$, and $Q_{J_0}x_n=Q^{2^{s-1}(n+1)}\cdots Q^{4n+4}Q^{2n+2}Q^{n+1}x_n$. Then, by Nishida relations $Sq^2_*\eta^2=(Sq^1_*\eta)^2$, compute that
$$Sq^2_*h(f)=Sq^2_*\xi^2=Sq^2_*(Q^{2^{s-1}(n+1)}\cdots Q^{4n+4}Q^{2n+2}Q^{n+1}x_n)^2+O^2$$
where $O$ is a sum of $Q_Jx_n$ with $J\neq J_0$ and consequently $l(J)\neq l(J_0)$. We then have
$$\begin{array}{lll}
Sq^2_*h(f) & = & (Sq^1_*Q^{2^{s-1}(n+1)}\cdots Q^{4n+4}Q^{2n+2}Q^{n+1}x_n)^2+(Sq^1_*O)^2\\
           & = & (Q^{2^{s-1}(n+1)-1}\cdots Q^{4n+4}Q^{2n+2}Q^{n+1}x_n)^2 + (Sq^1_*O)^2
           \end{array}$$
which noting that $\dim(Q^{2^{s-2}(n+1)}\cdots Q^{4n+4}Q^{2n+2}Q^{n+1}x_n)=2^{s-1}(n+1)-1$ implies that
$$Sq^2_*h(f)=(Q^{2^{s-2}(n+1)}\cdots Q^{4n+4}Q^{2n+2}Q^{n+1}x_n)^4 + (Sq^1_*O)^2$$
where the first term in the above expression is nonzero, and does not cancel out with any of terms $(Sq^1_*O)^2$ for the difference of length. Hence, $Sq^2_*h(f)\neq 0$ which is a contradiction. This implies that $J$ can only be of length one, so it is uniquely determined and $h(f)=(Q^{n+1}x_n)^2$.\\
Now, suppose $n$ is odd, that is $n+1$ is even. Applying the Nishida relation $Sq^2_*\eta^2=(Sq^1_*\eta)^2$ we compute that
$$Sq^2_*h(f)=(Q^{n}x_n)^2=x_n^4\neq 0.$$
This is a contradiction. So, $n$ must be even. This completes the proof.
\end{proof}

In order to complete the elimination of spherical classes in $H_*\Omega^2S^{n+2}$, we have to deal with the cases when $n$ is even.

\begin{lmm}\label{doubleloop-2}
Suppose $f:S^i\to\Omega^2S^{n+2}$ is given with $i>n$ and $n\geqslant7$, and $n$ even. Then, $h(f)=0$.
\end{lmm}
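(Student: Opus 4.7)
The plan is to reduce, via the second James--Hopf map, to a situation where Adams' Hopf invariant one theorem applies exactly as in the case $k=1$ of Theorem \ref{singleloop}. Lemma \ref{doubleloop-1} already forces $i=4n+2$ and $h(f)=(Q_1 x_n)^2=(Q^{n+1}x_n)^2$. I compose $f$ with the doubly-looped second James--Hopf $\Omega H:\Omega^2 S^{n+2}\to\Omega^2 S^{2n+3}$, then use the $2$-local James fibration to descend the resulting class to a map $S^{4n+2}\to\Omega S^{2n+2}$ whose Hurewicz image is the square of the bottom cell; the Hopf invariant one theorem then supplies the contradiction.

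The crucial homological input is the identity $(\Omega H)_*(Q_1 x_n)=x_{2n+1}\in H_{2n+1}\Omega^2 S^{2n+3}$. I would prove this using the commuting square of homology suspensions
\[
\xymatrix@C=34pt{
H_*\Omega^2 S^{n+2}\ar[r]^-{\sigma_*}\ar[d]_-{(\Omega H)_*} & H_{*+1}\Omega S^{n+2}\ar[d]^-{H_*}\\
H_*\Omega^2 S^{2n+3}\ar[r]^-{\sigma_*} & H_{*+1}\Omega S^{2n+3},
}
\]
the Kudo--Araki transgression formula $\sigma_*(Q_1 z)=(\sigma_* z)^2$ which yields $\sigma_*(Q_1 x_n)=x_{n+1}^2$, and the James--Hopf identity $H_*(x_{n+1}^2)=x_{2n+2}$ (the second James--Hopf detects the quadratic part of the James filtration). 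Chasing the square gives $\sigma_*((\Omega H)_*(Q_1 x_n))=x_{2n+2}$, and since $H_{2n+1}\Omega^2 S^{2n+3}=\F_2\cdot x_{2n+1}$ with $\sigma_*x_{2n+1}=x_{2n+2}$, the injectivity of $\sigma_*$ on this line forces the claim.

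Since $\Omega H$ is a loop map, $(\Omega H)_*$ is a homomorphism of Pontrjagin algebras, so $g:=(\Omega H)\circ f$ has $h(g)=x_{2n+1}^2$. Applying the second James--Hopf $\Omega H':\Omega^2 S^{2n+3}\to\Omega^2 S^{4n+5}$ to the target, the space $\Omega^2 S^{4n+5}$ is $(4n+2)$-connected, so the further composite is null-homotopic, and the $2$-local James EHP exact sequence
\[
\pi_{4n+3}S^{2n+2}\xrightarrow{E}\pi_{4n+4}S^{2n+3}\xrightarrow{H}\pi_{4n+4}S^{4n+5}=0
\]
produces a factorization $g=(\Omega E)\circ g''$ with $g'':S^{4n+2}\to\Omega S^{2n+2}$ and $E:S^{2n+2}\to\Omega S^{2n+3}$ the suspension. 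Proposition \ref{homology-injec} makes $(\Omega E)_*$ injective, and the only class in $H_{4n+2}\Omega S^{2n+2}$ is $x_{2n+1}^2$, so $h(g'')=x_{2n+1}^2$. Then the adjoint $\widetilde{g''}:S^{4n+3}\to S^{2n+2}$ has Hurewicz image a nontrivial square, hence is detected by the unstable mod-$2$ Hopf invariant. By Adams' theorem, $2n+2\in\{2,4,8\}$, which for $n\geq 7$ even (so $2n+2\geq 18$) is impossible.

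The only subtle point is the identity $(\Omega H)_*(Q_1 x_n)=x_{2n+1}$; everything else is $2$-local EHP exactness, connectivity, and Adams' theorem, all of which are already present in the proof of Theorem \ref{singleloop}. The role of the initial James--Hopf reduction is precisely to convert a Kudo--Araki square into a genuine square of a bottom-cell class, so that the Hopf-invariant-one argument becomes directly applicable.
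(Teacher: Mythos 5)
Your argument is correct, and it shares the paper's overall strategy---compose with $\Omega H:\Omega^2S^{n+2}\to\Omega^2S^{2n+3}$, show that $h(f)$ maps to the square of the bottom class, and extract a Hopf-invariant-one type contradiction---but both key steps are carried out by genuinely different means. For the identity $(\Omega H)_*(Q^{n+1}x_n)=x_{2n+1}$, the paper does not touch the homology suspension: it passes through the Snaith splitting and the stable James--Hopf map $j_2:QS^{n+1}\to QD_2S^{n+1}\simeq Q\Sigma^{n+1}P_{n+1}$, uses Kuhn's compatibility diagrams and the height filtration to get $(j_2)_*Q^{n+1}g_{n+1}=\Sigma^{n+1}g_{n+1}$, and chases back to $\Omega H$. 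Your route via naturality of $\sigma_*$, the Kudo transgression $\sigma_*Q_1x_n=(\sigma_*x_n)^2=x_{n+1}^2$, the James--Hopf identity on the quadratic part, and the one-dimensionality of $H_{2n+1}\Omega^2S^{2n+3}$ is more elementary and self-contained, and it is watertight. For the endgame, the paper applies a second James--Hopf map $H:\Omega S^{2n+3}\to\Omega S^{4n+5}$ and argues that the adjoint would have to be detected by $Sq^{2n+3}$, which is impossible since $Sq^{2n+3}=Sq^1Sq^{2n+2}$ is decomposable (Adams' theorem is deliberately avoided there); as printed, the indices in that step are shifted by one, whereas your EHP desuspension to $\hat g''\colon S^{4n+2}\to\Omega S^{2n+2}$ with $h(\hat g'')=x_{2n+1}^2$ keeps careful track of adjoints and lands cleanly on detection by $Sq^{2n+2}$. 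One small improvement: you do not need the full strength of Adams' theorem at the end, since $2n+2=2(n+1)$ with $n+1$ odd is never a power of $2$ for $n\geqslant 2$, so $Sq^{2n+2}$ is already decomposable by the Adem relations and cannot act nontrivially on a two-cell complex; that would keep your proof at the same elementary level the paper aims for.
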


The proof will use various relations between stable and unstable James-Hopf invariants, together with an interpretation of the Hopf invariant one result. First, recall that by James splitting \cite{James-reducedproduct} for a path connected space $X$ we have a splitting $\Sigma\Omega\Sigma X\simeq\bigvee_{r=1}^{+\infty}\Sigma X^{\wedge r}$. By projection on the $r$-th summand and taking adjoint we have a map $H_r:\Omega\Sigma X\to \Omega\Sigma X^r$ which we call James-Hopf invariant; we write $H$ to denote $H_2$. Moreover, this map can be composed with the stablisation map to yield a map $\Omega \Sigma X\to Q X^{\wedge r}$. This generalises as follows to yield stable maps. Recall that, for $k\geqslant 1$, when $X$ is path connected, we have Snaith splitting \cite{Snaith}
$$\Sigma^\infty \Omega^k\Sigma^kX\simeq\bigvee_{r=1}^{+\infty}\Sigma^\infty D_r(X,k)$$
where $D_r(X,k)=E\Sigma_r\times_{\Sigma_r}X^{\wedge r}$, writing $D_rX=D_r(X,k)$. In particular, $D_2S^n\simeq\Sigma^n P_n$ \cite{Kuhngeometry} where $P^n$ denotes the $n$-dimensional projective space, $P$ is the infinite dimensional real projective space, and $P_n=P/P^{n-1}$. By projection on to the $r$-th summand, and taking stable adjoint, we have a map $j_r^k:\Omega^k\Sigma^k\to QD_r(X,k)$ which we refer to as the $r$-th James-Hopf map for $\Omega^k\Sigma^kX$, and for $k=+\infty$ we write $j_r:QX\to D_rX$ for this map and call it stable James-Hopf maps. The maps $j_{r}^{k+d}$ and $\Omega^k j_r^k$ are compatible by through the suspension $\Omega^k\Sigma^k X\to \Omega^{k+d}\Sigma^{k+d}X$ where $d>0$ fitting into some obvious commutative diagrams
\cite[Proposition 1.1, Theorem 1.2]{Kuhngeometry} (see also \cite{Milgram-unstable}).\\
The homology of these maps is also well understood \cite{Kuhnhomology}. Essentially, there is a filtration on $H_*\Omega^k\Sigma^kX$, called the hight filtration, and the stable projection $\Sigma^\infty\Omega^k\Sigma^kX\to \Sigma^\infty D_r(X,r)$ induces projection on the elements of height $r$ in homology, the the homology $j_r^k$ is compatible with this (see \cite{CLM} for example or \cite{BE3} for this); this is the only fact that we need about homology of these maps.\\
Finally, we note that by Boardmap and Steer detecting a map $S^{2n+1}\to S^{n+1}$ by $Sq^{n+1}$ in its mapping cone, corresponds to detecting the adjoint mapping $S^{2n}\to\Omega S^{n+1}$ by $H:\Omega S^{n+1}\to \Omega S^{2n+1}$ \cite{BoardmanSteer} and the latter is equivalent to detecting the adjoint map $\widetilde{f}:S^{2n}\to\Omega S^{n+1}$ in homology by $h(\widetilde{f})=x_n^2$ \cite[Proposition 6.1.5]{Harper}. A similar and more general statement holds on maps $f:S^{2n}\to QX$ saying that $h(f)=x_n^2$ with $x_n\in\widetilde{H}_nX$ then the stable adjoint of $f$, $S^n\to X$ is detected by $Sq^{n+1}x_n=x_{2n+1}$ in its stable mapping cone \cite[Proposition 5.8]{AsadiEccles}.

\begin{proof}[Proof of Lemma \ref{doubleloop-2}]
By Lemma \ref{doubleloop-1} we have
$$h(f)=(Q^{n+1}g_n)^2$$
in particular, $i=4n+2$. Consider the composition
$$S^{4n+2}\stackrel{f}{\to} \Omega^2S^{n+2}\stackrel{\Omega H}{\to}\Omega^2 S^{2n+3}$$
where $H:\Omega S^{n+2}\to \Omega S^{2n+3}$ is the James-Hopf invariant. We have a commutative diagram
$$\xymatrix{
\Omega^2S^{n+2}\ar[r]^{\Omega H}\ar[d]_{=}      & \Omega^2 S^{2n+3}\ar[d] \\
\Omega^2S^{n+2}\ar[r]^{j_2^2}\ar[d]_{\Omega^2E} & QD_2(S^{n+1},2)\ar[d]\\
QS^n\ar[r]^-{\Omega j_2}                        & Q\Sigma^n P_{n+1}}
$$
where $E:S^{n+2}\to QS^{n+2}$ is the stablisation map, and $j_2:QS^{n+1}\to QD_2S^{n+1}=Q\Sigma^{n+1}P_{n+1}$. The composition of the vertical arrows on the left is injective in homology. Moreover, the map $\Omega j_2$ is multiplicative, and $j_2:QS^{n+1}\to QD_2S^{n+1}$ sends elements of height $2$, to elements of $\widetilde{H_*}D_2S^{n+1}$. In particular, we have $(j_2)_*Q^{n+1}g_{n+1}=\Sigma^{n+1}g_{n+1}$ and for dimensional reasons there are no other terms in this expression. This implies that
$$(\Omega j_2)(Q^{n+1}x_n)^2=(\Sigma^na_{n+1})^2\in H_*Q\Sigma^n P_{n+1}$$
where $a_i\in\widetilde{H}_iP_{n+1}$ is a generator. By a diagram chase, we conclude that
$$(\Omega H)_*h(f)=(\Sigma^nx_{n+1})^2\in H_*\Omega^2S^{2n+3}.$$
Next, by a similar method, for $H:\Omega S^{2n+3}\to \Omega S^{4n+5}$, by a similar method we have
$$((\Omega H)\circ(\Omega H))_*h(f)=x_{4n+4}.$$
This implies that $H:\Omega S^{2n+3}\to \Omega S^{4n+5}$ detects the adjoint map $\widetilde{\Omega H\circ f}:S^{4n+5}\to S^{2n+3}$ which means that $H(f):S^{4n+5}\to S^{2n+3}$ has to be detected by $Sq^{2n+3}$ in its mapping cone by $Sq^{2n+3}x_{2n+3}=x_{4n+6}$. But, this is impossible as $Sq^{2n+3}$ decomposes as a composition of $Sq^{2n+2}$ and $Sq^1$. This completes the proof.
\end{proof}

Note that in the last paragraph of the above proof, we could have appealed to Adams Hopf invariant one result, but we chose this at the proof seems to make more use of the fact that our maps are unstable maps.

\subsection{Spherical classes in $\Omega^3S^{n+3}$}
Note that our main theorem determines spherical classes in $H_*\Omega^3S^{n+3}$ for $n\leqslant 10$. Here, we deal with the remaining cases.
\begin{thm}\label{tripleloop}
Suppose $f:S^i\to \Omega^3S^{n+3}$ with $h(f)\neq 0$. Then $i=n$ and $f$ is given with the inclusion of the bottom cell.
\end{thm}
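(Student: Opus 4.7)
The inclusion of the bottom cell $S^n\to\Omega^3 S^{n+3}$ clearly yields a spherical class, so the task is to rule out the case $i>n$. Since Theorem \ref{main} already handles $n\leq 10$, I restrict attention to $n\geq 11$; structurally only $n\geq 6$ is required so that Theorem \ref{doubleloop} applies to $\Omega^2 S^{n+3}=\Omega^2 S^{(n+1)+2}$. My plan is to mirror the argument of Theorem \ref{doubleloop}: first reduce $h(f)$ to a primitive square, then narrow this class down by Nishida's relations, and finally eliminate the surviving candidates via an iterated James--Hopf construction.

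First, I would adjoint $f$ to $\tilde f:S^{i+1}\to \Omega^2 S^{n+3}$, so that $h(\tilde f)=\sigma_* h(f)$. Since $n+1\geq 7$ and $i+1>n+1$, Theorem \ref{doubleloop} forces $h(\tilde f)=0$, hence $\sigma_* h(f)=0$. Using the commutation $\sigma_* Q^a=Q^a\sigma_*$ in upper-indexed notation, every polynomial generator $Q^I x_n$ of $H_*\Omega^3 S^{n+3}$ maps to the nonzero class $Q^I y_{n+1}$ in $H_*\Omega^2 S^{n+3}$; therefore $\sigma_*$ is injective on the indecomposable primitives and vanishes only on decomposables. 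Combined with $h(f)$ being primitive (as a Hurewicz image), this forces $h(f)=\xi^2$ for some primitive $\xi$, and Proposition \ref{2^t} makes $\xi$ odd-dimensional.

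Next, I would pin down $\xi$ and then rule it out. Since odd-dimensional primitives cannot be squares, $\xi$ is an $\F_2$-combination of polynomial generators $Q_{j_1}\cdots Q_{j_s}x_n$ with $1\leq j_1\leq\cdots\leq j_s\leq 2$, or $\xi=x_n$ when $n$ is odd; odd-dimensionality forces the outermost index $j_1=1$. Sphericality yields $Sq^{2r}_* h(f)=(Sq^r_*\xi)^2=0$, so $Sq^r_*\xi=0$ for every $r>0$, and iterating Nishida's formula in the style of Lemma \ref{doubleloop-1} successively eliminates every polynomial generator of length $\geq 2$; for example $Sq^1_*(Q_1 Q_1 x_n)=(Q_1 x_n)^2\neq 0$ disposes of the all-ones word, and mixed words fall to $Sq^{2^k}_*$-computations combined with parity constraints on $n$. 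The surviving possibilities are $\xi=x_n$ (requiring $n$ odd) and $\xi=Q^{n+1}x_n$ (requiring $n$ even, via $Sq^1_*(Q^{n+1}x_n)=n\cdot x_n^2$). Both are eliminated by an iterated James--Hopf argument modelled on Lemma \ref{doubleloop-2}: for $\xi=x_n$, stabilising through $QS^n$ and composing with the stable James--Hopf map $j_2:QS^n\to Q\Sigma^n P_n$ forces the triple adjoint $S^{2n+3}\to S^{n+3}$ to have stable Hopf invariant one, which by the Hopf invariant one result demands $n+3\in\{2,4,8\}$, impossible for $n\geq 6$; for $\xi=Q^{n+1}x_n$, composition with $\Omega^2 H:\Omega^3 S^{n+3}\to\Omega^3 S^{2n+5}$ (and a further iterate if needed) reduces $h(f)$ to either a bottom class or its square in a high-dimensional iterated loop space of a sphere, once more producing a Hopf-invariant-one contradiction outside the allowed range.

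The main obstacle will be the Nishida step. Unlike $H_*\Omega^2 S^{n+2}$, which supports only a single operation $Q_1$, the algebra $H_*\Omega^3 S^{n+3}$ admits two independent operations $Q_1$ and $Q_2$; this significantly enlarges the pool of odd-dimensional polynomial generators, and the constraint $Sq^r_*\xi=0$ bifurcates into several sub-cases distinguished by the parity of $n$ and by the mixture of $Q_1$'s and $Q_2$'s in $J$, each of which requires its own Nishida computation. The subsequent James--Hopf step is conceptually identical to Lemma \ref{doubleloop-2}, but it requires some care in tracking the correct Snaith summand $D_r(S^k,3)$, since we are working with triple-loop rather than infinite-loop configuration spaces.
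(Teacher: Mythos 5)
Your setup is sound and matches the paper's strategy: reduce to $h(f)=\xi^2$ with $\xi$ an odd-dimensional primitive via Theorem \ref{doubleloop} and Proposition \ref{2^t}, write $\xi$ as a sum of generators $Q_Jx_n$ with $J$ a word in $\{1,2\}$, prune with Nishida relations, and finish the survivor $(Q^{n+1}x_n)^2$ with an iterated James--Hopf argument as in Lemma \ref{tripleloop-2}. But there is a genuine gap at the Nishida step: your claim that iterating $Sq^{2^k}_*$ ``successively eliminates every polynomial generator of length $\geq 2$'' is false. The mixed word $\xi=Q_1Q_2x_n=Q^{2n+3}Q^{n+2}x_n$ (dimension $4n+5$, odd, so a legitimate candidate) survives all of these tests when $n$ is odd: $Sq^1_*Q^{2n+3}=0$ since $2n+3$ is odd, and in $Sq^2_*Q^{2n+3}Q^{n+2}x_n$ the $t=0$ Nishida term $\binom{2n+1}{2}Q^{2n+1}Q^{n+2}x_n$ vanishes because $Q^{2n+1}$ acts on a class of dimension $2n+2$, while the $t=1$ term involves $Sq^1_*Q^{n+2}=0$ for $n$ odd. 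Only the case $n$ even dies by a two-line Cartan--Nishida computation (the paper's Lemma \ref{tripleloop-5}).

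Eliminating $(Q^{2n+3}Q^{n+2}x_n)^2$ with $n$ odd is the hardest part of the theorem and your proposal contains no mechanism for it. The paper (Lemma \ref{tripleloop-3}) stabilises $f$, adjoints down to $QS^1$, pushes through the stable James--Hopf map $j_2:QS^1\to Q\Sigma P$, and invokes the Curtis--Wellington criterion for classes killed by all $Sq^t_*$ (via the function $\rho$) to force $n+2=2^t-1$; a separate $Sq^2_*$ analysis of the twice-adjointed class, keeping careful track of lower-excess error terms, then rules out $n+2=2^t-1$. None of this is reachable by ``parity constraints plus $Sq^{2^k}_*$'' alone, so as written your argument proves the theorem only for $n$ even. (Two smaller remarks: your inclusion of the extra candidate $\xi=x_n$, dispatched by Hopf invariant one, is harmless and arguably more careful than the paper, which discards it on dimensional grounds; and your elimination of $(Q^{n+1}x_n)^2$ is vaguer than, but in the same spirit as, Lemma \ref{tripleloop-2}, which uses the vanishing of $\pi_{4n+5}S^{4n+9}$ to pull $H(f)$ back to $\Omega^2S^{2n+4}$ and then applies Theorem \ref{doubleloop}.)
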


Similar to previous cases, for obvious reasons, we only have to deal with the case $i>n$.

\begin{lmm}\label{tripleloop-1}
Suppose $f:S^i\to \Omega^3S^{n+3}$ with $h(f)\neq 0$ and $i>n$ and $n\geqslant 6$. Then, we have one of the following cases:\\
(i) $h(f)=(Q^{2n+3}Q^{n+2}x_n)^2$;\\
(ii) $h(f)=(Q^{n+1}x_n)^2$.
\end{lmm}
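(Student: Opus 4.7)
The plan is to run the template of Lemma \ref{doubleloop-1} one loop higher: adjoint down to get into the reach of Theorem \ref{doubleloop}, then pin the square root of $h(f)$ down with primitivity and the Steenrod relations, and finally eliminate the long compositions with a mod-$2$ Nishida computation.

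For $\widetilde f\colon S^{i+1}\to \Omega^2 S^{n+3}$, viewing the target as $\Omega^2 S^{(n+1)+2}$ with $n+1\geqslant 7$ and $i+1>n+1$, Theorem \ref{doubleloop} gives $h(\widetilde f)=0$. Hence $h(f)$ lies in the kernel of the homology suspension $\sigma_*$, which kills decomposables in the Pontrjagin ring; combined with $h(f)$ being primitive (spherical), \cite[Proposition 4.21]{MM} forces $h(f)=\xi^2$, and Proposition \ref{2^t} forces $\xi$ to be odd-dimensional.

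Next I pin down $\xi$. A short Hopf-algebra argument --- from $\Delta(\xi^2)=\xi^2\otimes 1+1\otimes\xi^2$ one deduces $(\Delta'\xi)^2=0$ in $H_*\Omega^3 S^{n+3}\otimes H_*\Omega^3 S^{n+3}$, which is a polynomial algebra over $\Z/2$ (hence a domain), forcing $\Delta'\xi=0$ --- shows $\xi$ is itself primitive. By Milnor-Moore, the primitives are spanned by the $2^k$-th powers of the Dyer-Lashof generators $Q_J x_n$; odd-dimensionality selects only generators with $k=0$ and $j_1=1$. Using the formula $\dim Q_J x_n = 2^s(n+2)-2^t-1$ where $J$ is a nondecreasing sequence of length $s$ starting with $j_1=1$ and having exactly $t$ ones (so $1\leqslant t\leqslant s$), a short check shows that for $n\geqslant 6$ distinct such $J$ give distinct dimensions, so $\xi=Q_J x_n$ for a single $J$. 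The identity $Sq^{2r}_*\xi^2=(Sq^r_*\xi)^2$ (the only surviving Cartan term in characteristic two) upgrades the spherical vanishing $Sq^r_* h(f)=0$ to $Sq^r_*\xi=0$ for all $r\geqslant 1$. Taking $r=1$ and using $Sq^1_* Q^a=Q^{a-1}$ for $a$ even and $0$ for $a$ odd forces the outermost index $a_1$ to be odd, equivalently $j_2\equiv 0\pmod 2$; by nondecreasing monotonicity, $j_2=\cdots=j_s=2$. Thus either $s=1$ and $\xi=Q^{n+1}x_n$ (case (ii)), or $s\geqslant 2$ with $J=(1,2,\ldots,2)$.

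Finally --- and this is the main obstacle --- I rule out $s\geqslant 3$ via $Sq^2_*\xi$. For $J=(1,2,\ldots,2)$ of length $s$ one computes $a_1=2^{s-1}(n+2)-1$ and $a_k=2^{s-k}(n+2)$ for $2\leqslant k\leqslant s$. The key arithmetic input is that $a_1-2=2^{s-1}(n+2)-3\equiv 1\pmod 4$ when $s\geqslant 3$, so by Lucas' theorem $\binom{a_1-2}{2}\equiv 0\pmod 2$; Nishida then collapses to $Sq^2_* Q^{a_1}=Q^{a_1-1}Sq^1_*$. Since $a_2$ is even, $Sq^1_* Q^{a_2}=Q^{a_2-1}$, yielding
\[
Sq^2_*\xi \;=\; Q^{a_1-1}Q^{a_2-1}Q^{a_3}\cdots Q^{a_s}x_n,
\]
an admissible nonzero monomial, contradicting $Sq^2_*\xi=0$. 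Hence $s\in\{1,2\}$, giving cases (ii) and (i) respectively.
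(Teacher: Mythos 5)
Your proof is correct and follows essentially the same route as the paper: adjoint down to invoke Theorem \ref{doubleloop} and Proposition \ref{2^t}, deduce that $h(f)=\xi^2$ with $\xi$ an odd-dimensional primitive expressible as a sum of monomials $Q_Jx_n$ with $J$ a nondecreasing string of $1$'s and $2$'s, and then use Nishida relations to cut $J$ down to $(1)$ or $(1,2)$. The only divergences are cosmetic: you isolate a single $J$ by a dimension count where the paper argues via preservation of length under the $Sq^r_*$, and you eliminate strings with at least two $2$'s by computing $Sq^2_*\xi$ with a Lucas-theorem coefficient check where the paper applies $Sq^{2^{s+2}}_*$ directly to $Q_Jx_n$.
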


\begin{proof}
Recall that
$$H_*\Omega^3S^{n+3}\simeq\Z/2[Q_Jx_n:0\leqslant j_1\leqslant\cdots\leqslant j_s\leqslant 2].$$
Theorem \ref{doubleloop} together with Lemma \ref{2^t} imply that $h(f)=\xi^2$ for some odd dimensional class $\xi$. As previous, by dimensional reasons together with the fact that $\xi$ is primitive, we may write $\xi$ as a sum of terms $Q_Jx_n$ involving no decomposable terms. Similar to the proof of Lemma \ref{doubleloop-1}, this means that $J$ cannot being with $0$, that is $J$ is only a sequence of $1$'s and $2$'s. Suppose
$$J=(\overbrace{1,1,\ldots,1}^{s-\textrm{times}},\overbrace{2,\ldots,2}^{t-\textrm{times}}).$$
Then, we compute that written in terms of upper indexed operations,
$$Q_Jx_n=\underbrace{\cdots Q^{2^{t+1}n+2^{t+2}-2}Q^{2^tn+2^{t+1}-1}}_{\textrm{corresponding to $1$'s in $J$}}\underbrace{Q^{2^{t-1}(n+2)}\cdots Q^{2(n+1)}Q^{n+2}}_{\textrm{corresponding to $2$'s in $J$}}x_n.$$
We note that $J$ is uniquely determined by $s$ and $t$. If $s\geqslant 2$ then we have $Sq^1_*Q_Jx_n\neq 0$ and consequently, $Sq^2_*h(f)\neq 0$. This implies that $s\leqslant 1$. Similarly, if $t>1$ then by Nishida relations, $Sq^{2^{s+2}}_*Q_Jx_n\neq 0$ which shows that $Sq^{2^{s+3}}_*h(f)\neq 0$. This is a contradiction, showing that $t\leqslant 1$. We then have following choices for $\xi$:\\
$(1)$ $\xi=Q_1Q_2x_n=Q^{2n+3}Q^{n+2}x_n$;\\
$(2)$ $\xi=Q_1x_n=Q^{n+1}x_n$;\\
$(3)$ $\xi=Q_2x_n=Q^{n+2}x_n$.\\
However, as $Q_2x_n$ is of the even dimension $2n+2$, we cannot have this case by Lemma \ref{2^t}. Therefore, we are only left with two cases as claimed.
\end{proof}

It remains to eliminate the possibilities in the above lemma from being spherical.
\begin{lmm}\label{tripleloop-2}
Suppose $f:S^i\to \Omega^3S^{n+3}$ with $h(f)\neq 0$, $i>n$ and $n\geqslant 6$. Then, it is impossible to have $h(f)=(Q^{n+1}x_n)^2$.
\end{lmm}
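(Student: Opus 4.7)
The plan is to adapt the James--Hopf diagram chase of Lemma \ref{doubleloop-2} to the triple-loop setting, in three stages. The first stage forces $n$ to be even via Nishida relations: since spherical classes satisfy $Sq^i_*h(f)=0$ for all $i>0$, the Cartan identity $Sq^{2t}_*\eta^2=(Sq^t_*\eta)^2$ gives $Sq^2_*h(f)=(Sq^1_*Q^{n+1}x_n)^2$, and the Nishida formula yields $Sq^1_*Q^{n+1}x_n=n\cdot Q^nx_n=n\cdot x_n^2$, which is nonzero mod $2$ precisely when $n$ is odd. Crucially, for $n$ even with $n\geqslant 6$ the integer $n+1$ is odd and at least $7$, hence $2n+2=2(n+1)$ is not a power of $2$ and $Sq^{2n+2}$ is decomposable in the Steenrod algebra --- this decomposability is the key input for the final stage.

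The second stage routes $(Q^{n+1}x_n)^2$ through the James--Hopf machinery in analogy with Lemma \ref{doubleloop-2}. I would consider the commutative diagram relating $\Omega^2 H:\Omega^3 S^{n+3}\to\Omega^3 S^{2n+5}$, the stable James--Hopf map $j_2^3:\Omega^3 S^{n+3}\to QD_2(S^n,3)$ and its stabilised partner $j_2:QS^n\to Q\Sigma^n P_n$, connected through $\Omega^3 E^\infty$. Since $j_2$ detects height-two elements, it sends $Q^{n+1}x_n$ to (a suspension of) a cell of $P_n$, and multiplicativity of $\Omega j_2$ carries the square to the square of that class. Adjointing down between loop levels and composing with a further single-loop James--Hopf $H:\Omega S^{2n+3}\to\Omega S^{4n+5}$, as in the proof of Lemma \ref{doubleloop-2}, should yield an unstable map $\widetilde g$ between spheres whose two-cell mapping cone must be detected by $Sq^{2n+2}$.

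The third stage concludes by decomposability: writing $Sq^{2n+2}=\sum Sq^{a_i}Sq^{b_i}$ with $a_i,b_i>0$, each intermediate $Sq^{b_i}$ applied to the bottom class of the two-cell mapping cone of $\widetilde g$ sits in a dimension strictly between the two cell dimensions and therefore vanishes; hence $Sq^{2n+2}$ acts trivially on the bottom class, contradicting the detection. This contradiction forces $h(f)=0$ and completes the proof.

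The principal technical obstacle is the second stage. Unlike in the double-loop case, the direct push-forward $(\Omega^2 H)_*(Q^{n+1}x_n)^2$ already vanishes in $H_*\Omega^3 S^{2n+5}$: the target's bottom class lies in dimension $2n+2$ and its square in dimension $4n+4$, leaving no room in dimension $4n+2$. One must therefore route the class through the Snaith-splitting diagrams and several carefully chosen adjunctions before the sphere-to-sphere detection by $Sq^{2n+2}$ becomes visible, and verify that the reduction really lands in a two-cell complex so the decomposability argument bites cleanly.
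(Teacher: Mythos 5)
Your first stage is correct and complete: for $n$ odd, $Sq^2_*(Q^{n+1}x_n)^2=(Sq^1_*Q^{n+1}x_n)^2=(n\cdot x_n^2)^2=x_n^4\neq 0$, exactly as in the final step of Lemma \ref{doubleloop-1}. But note that for $n$ even this exhausts the primary operations: $Sq^{2k}_*(Q^{n+1}x_n)^2=\bigl(\binom{n+1-k}{k}Q^{n+1-k}x_n\bigr)^2$ vanishes for all $k$ (the operation $Q^{n+1-k}$ is zero for $k>1$ and the coefficient is even for $k=1$), and odd-degree operations kill squares. So the entire burden of the lemma falls on your second stage, which is announced rather than carried out, and this is a genuine gap. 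Concretely, the map you hope to extract does not land in a two-cell complex: the stable James--Hopf map $j_2$ sends $Q^{n+1}x_n$ to the $(2n+1)$-cell of $\Sigma^nP_n$ (not its bottom cell), so the mapping cone on which $Sq^{2n+2}$ is supposed to act contains cells of the stunted projective space in every dimension between $2n$ and $4n+3$. Writing $Sq^{2n+2}=\sum Sq^{a_i}Sq^{b_i}$, the intermediate classes $Sq^{b_i}(\Sigma^na_{n+1})$ are (possibly nonzero) cohomology classes of $\Sigma^nP_n$ rather than elements of zero groups, and $Sq^{a_i}$ applied to them can perfectly well carry a component on the top cell; the decomposability argument of your third stage therefore does not apply without a further reduction that you have not supplied. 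As it stands, the proposal proves the lemma only for $n$ odd.

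For comparison, the paper's proof takes a different and purely homotopy-theoretic route that treats both parities at once and uses no cohomology operations: since $h(f)=(Q^{n+1}x_n)^2$ forces $i=4n+2$, one has $f\in\pi_{4n+5}S^{n+3}$, and the composite of two unstable James--Hopf invariants lands in $\pi_{4n+5}S^{4n+9}=0$; the James fibration $S^{2n+4}\to\Omega S^{2n+5}\to\Omega S^{4n+9}$ then lifts $H(f)$ to $\pi_{4n+4}S^{2n+4}$, so $\Omega^2H\circ f$ factors through $\Omega^2S^{2n+4}$, where Theorem \ref{doubleloop} forces triviality in homology; this is then played off against the nontriviality of $(\Omega^2H)_*h(f)$. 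Your parenthetical observation that the direct push-forward $(\Omega^2H)_*(Q^{n+1}x_n)^2$ vanishes for dimensional reasons is correct and worth taking seriously: $H_{4n+2}\Omega^3S^{2n+5}=0$ because the bottom class of the target sits in dimension $2n+2$, and $(\Omega^2H)_*$ commutes with $Q_0$, $Q_1$ while killing $x_n$. This shows the contradiction cannot be read off by pushing forward into $H_*\Omega^3S^{2n+5}$, and the final displayed identity of the printed proof is affected by the same dimension count. So the lifting argument through $\pi_{4n+5}S^{4n+9}=0$ is the essential input your sketch is missing, but the step converting it into a homological contradiction needs to be set up at a less-looped level (where the image of $Q^{n+1}x_n$ under the James--Hopf map is a bottom class) rather than after looping twice.
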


\begin{proof}
First, note that if $h(f)=(Q_1x_n)^2=(Q^{n+1}x_n)^2$ then $i=4n+2$, i.e. $f\in\pi_{4n+2}\Omega^3S^{n+3}\simeq\pi_{4n+5}S^{n+3}$. Consider the James maps $H:\Omega S^{n+3}\to \Omega S^{2n+5}$ and $H:\Omega S^{2n+5}\to \Omega S^{4n+9}$. By viewing $f$ as $f:S^{4n+4}\to\Omega S^{n+3}$, we have $H(f):S^{4n+4}\to \Omega S^{2n+5}$. Another application of $H:\Omega S^{2n+5}\to \Omega S^{4n+9}$ yields $H(H(f)):S^{4n+4}\to\Omega S^{4n+9}$ which is an element of $\pi_{4n+5}S^{4n+9}\simeq 0$. By considering the James fibration $S^{2n+4}\to \Omega S^{2n+5}\to \Omega S^{4n+9}$, this means that $H(f)$ pulls back to an element of $\pi_{4n+4}S^{2n+4}$. Hence, $(\Omega^2 H)\circ f:S^{4n+2}\to\Omega^3S^{2n+5}$ pulls back to a map $S^{4n+2}\Omega^2S^{2n+4}$. By Theorem \ref{doubleloop} this map is trivial in homology, hence the map $\Omega^2 H\circ f:S^{4n+2}\to \to\Omega^2S^{2n+5}$ is trivial in homology.\\
On the other hand, the map $\Omega^2 H:\Omega^3S^{n+3}\to\Omega^3S^{2n+5}$ is a double loop map, hence in homology respects the product as well as the operations coming from the double loop space structure, i.e. $Q_0$ and $Q_1$. In particular, this implies that
$$h(\Omega^2H\circ f)=(\Omega^2H)_*h(f)=(\Omega^2 H)_*(Q_1x_n)^2=(Q_1x_n)^2$$
which means that $h(\Omega^2\circ f)\neq 0$ as $H_*\Omega^3S^{2n+5}$ is polynomial. This contradicts the above observation on triviality of the homology of this map. This completes the proof.
\end{proof}

Next, we prove the last possibility in two steps. First one is quite straightforward.

\begin{lmm}\label{tripleloop-5}
Suppose $n$ is even with $n\geqslant 6$. Then, it is impossible to have $f:S^i\to\Omega^3S^{n+3}$ with $h(f)=(Q^{2n+3}Q^{n+2}x_n)^2$.
\end{lmm}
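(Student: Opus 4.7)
The plan is to rule out sphericality of $h(f) = \xi^2$, where $\xi = Q^{2n+3}Q^{n+2}x_n$, by exhibiting a single positive-degree Steenrod operation that does not annihilate it. Since a spherical class is killed by every $Sq^r_*$ with $r > 0$, and since Cartan's formula $Sq^{2t}_*\eta^2 = (Sq^t_*\eta)^2$ (with odd squaring operations automatically vanishing on squares) is already recorded in the excerpt, the problem reduces to computing $Sq^t_*\xi$ for small $t$ and hoping that one of these is nonzero.

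First I would dispose of the $t = 1$ case: because $2n+3$ is odd, the Nishida relation $Sq^1_* Q^{2k+1} = 0$ gives $Sq^1_*\xi = 0$ at once, so $Sq^2_* h(f) = (Sq^1_*\xi)^2 = 0$ automatically and yields no information. The substance of the argument is therefore $Sq^2_*\xi$, where the Nishida expansion
$$Sq^2_* Q^{2n+3} = \binom{2n+1}{2}Q^{2n+1} + Q^{2n+2}Sq^1_*$$
is the place where the parity hypothesis enters decisively: for $n$ even, $\binom{2n+1}{2} = n(2n+1) \equiv 0 \pmod 2$, so the first term drops. Applying the surviving operator to $Q^{n+2}x_n$, and using $Sq^1_* Q^{2k} = Q^{2k-1}$ (valid since $n + 2$ is even) together with $Sq^1_* x_n = 0$, I would conclude
$$Sq^2_*\xi = Q^{2n+2}Q^{n+1}x_n = Q_1 Q_1 x_n.$$

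The final step is to verify that $Q_1 Q_1 x_n$ is genuinely nonzero in $H_*\Omega^3 S^{n+3}$, which is immediate from the Kudo--Araki description since the sequence $(1,1)$ is nondecreasing with $0 \leq 1 \leq 1 \leq 2$ and $1 < n$ (using $n \geq 6$). Cartan then yields
$$Sq^4_* h(f) = (Sq^2_*\xi)^2 = (Q_1 Q_1 x_n)^2 \neq 0,$$
contradicting sphericality and finishing the argument. I do not expect any real obstacle: in contrast to Lemma \ref{tripleloop-2}, which required a geometric James--Hopf argument to eliminate $(Q^{n+1}x_n)^2$, the class $(Q^{2n+3}Q^{n+2}x_n)^2$ is caught directly by a single Nishida-plus-Cartan calculation, with the parity of $n$ carrying all the weight at exactly the right spot.
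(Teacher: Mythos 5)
Your proposal is correct and follows essentially the same route as the paper: apply Nishida to get $Sq^2_*\xi=Q^{2n+2}Q^{n+1}x_n=Q_1Q_1x_n\neq 0$ (the parity of $n$ killing the $\binom{2n+1}{2}Q^{2n+1}$ term) and then Cartan to conclude $Sq^4_*h(f)=(Q^{2n+2}Q^{n+1}x_n)^2\neq 0$, a contradiction. Your version is in fact slightly more careful than the paper's, which suppresses the vanishing binomial term and contains a small typo in the intermediate expression.
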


\begin{proof}
Suppose there exists such a map. By Nishida relations we compute that $Sq^2_*Q^{2n+3}Q^{n+2}x_n=Q^{2n+3}Sq^1_*Q^{n+2}x_n=Q^{2n+2}Q^{n+1}x_n\neq 0$. Consequently, by Cartan formula
$$Sq^4_*h(f)=Sq^4_*(Q^{2n+3}Q^{n+2}x_n)^2=(Sq^2_*Q^{2n+3}Q^{n+2}x_n)^2=(Q^{2n+2}Q^{n+1}x_n)^2\neq 0$$
which is a contradiction.
\end{proof}

In order to eliminate the cases of $h(f)=(Q^{2n+3}Q^{n+2}x_n)^2$ with $n$ odd we need some more numerical tools. For $m$ a nonnegative integer, let $n=\sum_{i=0}^{+\infty} n_i2^i$ with $n_i\in\{0,1\}$ denote its binary expansion. Define $\rho(m)=\min\{i:m_i=0\}$. By looking at the binary expansions, it is easy to see that $\rho(m)$ is the least integer $t$ with
$${n-t\choose t}\equiv 1\textrm{ mod }2.$$
Moreover, note that for ${a\choose b}\equiv 1$ modulo $2$ we need $a_i\leqslant b_i$ in their binary expansions. These observations are useful while evaluating the coefficients in the Nishida relations. We record the following.

%\begin{lmm}\label{Nishida-example1}
%Suppose $n+2\neq 2^t-1$ for all $t>0$, and let $\rho=\rho(n+2)$. Then
%$$Sq^{2^\rho+2}_*Q^{4n+5}Q^{2n+3}Q^{n+2}x_1=Q^{4n+5-2^{\rho+1}}Q^{2n+3-2^{\rho}}Q^{n+2-2^\rho}x_1+O\neq 0$$
%where $O$ is a sum of terms $Q^K$ with $\ex(K)<\ex(I)$. Note that the first term is admissible.
%\end{lmm}

%\begin{proof}
%By iterated application of Nishida relations, we compute that
%$$Sq^{2^{\rho+2}}_*Q^{4n+5}Q^{2n+3}Q^{n+2}x_1=\sum_{t_1=0}^{2^{\rho+1}}\sum_{t_2=0}^{2^\rho}\epsilon_{1}\epsilon_{2}Q^{4n+5-t_1}Q^{2n+3-t_2}Sq^{t_2}_*Q^{n+2}x_1$$
%where
%$$\epsilon_1={4n+5-2^{\rho+2}\choose 2^{\rho+2}-2t_1},\ \epsilon_2={2n+3-2^{\rho+1}\choose 2^{\rho+1}-2t_2}.$$
%For $Sq^{t_2}_*Q^{n+2}x_1$ we have
%$$Sq^{t_2}_*Q^{n+2}x_1=\sum_{t_3=0}^{2^{\rho-1}}{n+2-t_3
%\end{proof}

We also need to recall description of $H_*Q_0S^0$ in terms of upper indexed operations. Let $\zeta=Q^i[1]*[-2]\in H_iQ_0S^0$ where $[n]$ is the image of $n$ under the Hurewicz map $\pi_0QS^0\to H_0QS^0$. It is known that (see for example \cite[Chapter 1,Section 5]{Snaith-book})
$$H_*Q_0S^0\simeq\Z/2[Q^I\zeta_i:(I,i) \textrm{ admissible },\ex(I)>i]$$
so that $\sigma_*Q^I\zeta_i=Q^IQ^ig_1$ where $\sigma_*^k:H_*Q_0S^0\to H_{*+k}QS^k$ is the $k$-fold homology suspension. The following is due to Curtis \cite[Lemma 6.2]{Curtis} and later on was generalised to odd primes by Wellington \cite[Theorem 11.25]{Wellington}.
\begin{thm}
For $I=(i_1,\ldots,i_s)$, $\ex(Q^I\zeta_i):=i_1-(i_2+\cdots+i_s+i)$. Then, $Sq^t_*Q^Ix_i=0$ for all $t>0$ if and only if\\
(1) $\ex(Q^I\zeta_i)<2^{\rho(i_1)}$ and (2) $2i_{j+1}-i_j<2^{\rho(i_j+1)}$ for $j=1,\ldots,s$ setting $i_{s+1}=i$.
\end{thm}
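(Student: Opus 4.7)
The plan is to argue by induction on the length $s$ of the admissible sequence $I=(i_1,\ldots,i_s)$, applying the Nishida relations
$$Sq^r_*Q^a=\sum_{t\geqslant 0}\binom{a-r}{r-2t}Q^{a-r+t}Sq^t_*$$
and Lucas's theorem to evaluate the binomial coefficients modulo $2$. The key numerical fact driving both implications is the one recorded just above the theorem: $\rho(m)$ is the least positive $t$ with $\binom{m-t}{t}\equiv 1\pmod 2$, equivalently $\binom{m-t}{t}\equiv 0\pmod 2$ for every $0<t<\rho(m)$. This identity translates condition (1) into the statement that the diagonal Nishida coefficient $\binom{i_1-r}{r}$ is even for all $0<r\leqslant\ex(Q^I\zeta_i)$, and translates condition (2) into the vanishing modulo $2$ of the mixed coefficients that appear after re-admissibilising via Adem relations for the Kudo--Araki operations.

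For the base case $s=0$, the Cartan formula together with $Sq^t_*[-2]=0$ for $t>0$ yields $Sq^r_*\zeta_i=\binom{i-r}{r}\zeta_{i-r}$, and the vanishing for all $r>0$ is exactly condition (1) in its degenerate form. For the inductive step I would first verify that conditions (1)--(2) for $I$ imply the analogous conditions for the truncation $I'=(i_2,\ldots,i_s)$, using admissibility together with condition (2) at $j=1$ to control $\ex(Q^{I'}\zeta_i)=\ex(Q^I\zeta_i)+2i_2-i_1$ below $2^{\rho(i_2)}$. The inductive hypothesis then kills $Sq^t_*$ on the tail for every $t>0$, so the Nishida sum collapses to the single surviving term $\binom{i_1-r}{r}Q^{i_1-r}(Q^{i_2}\cdots\zeta_i)$.

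When $i_1-r\geqslant 2i_2$ this monomial is already admissible, and the $\rho$-identity combined with condition (1) forces its coefficient to be even. When $i_1-r<2i_2$ one must expand via the Adem relations, and condition (2), namely $2i_{j+1}-i_j<2^{\rho(i_j+1)}$, is precisely what makes the resulting binomial coefficients vanish modulo $2$. The converse (only-if) direction runs the same computation in reverse: if condition (1) first fails at some $r$, then the admissible monomial $Q^{i_1-r}Q^{i_2}\cdots\zeta_i$ appears with odd coefficient and is not cancelled by any other contribution, since the polynomial-algebra structure of $H_*Q_0S^0$ ensures linear independence of admissible monomials; similarly, if (2) fails at some $j$, taking $r=2i_{j+1}-i_j$ produces an uncancelled admissible monomial after the Adem expansion.

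The principal obstacle is the Adem-bookkeeping in the step where intermediate operations fall below the admissibility threshold: one must track exactly which admissible monomials appear in the expansion and verify that the odd contributions from different $(r,t)$ pairs do not accidentally cancel modulo $2$. The $\rho$-function is engineered precisely so that this does not happen, and the verification reduces to comparing binary expansions via Lucas's theorem applied to the carry pattern of $i_j+1$. The stability built into the hypothesis $\ex(I)>i$ ensures that all monomials under discussion lie in the polynomial basis of $H_*Q_0S^0$, so that non-cancellation of an odd coefficient is genuine non-vanishing.
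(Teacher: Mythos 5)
First, a remark on the comparison you were asked to survive: the paper offers no proof of this statement at all --- it is quoted from Curtis \cite[Lemma 6.2]{Curtis} and Wellington \cite[Theorem 11.25]{Wellington} --- so your argument has to stand on its own. It does not, because the inductive step rests on a false claim. You assert that conditions (1)--(2) for $I$ pass to the truncation $I'=(i_2,\ldots,i_s)$, so that by induction $Sq^t_*Q^{I'}\zeta_i=0$ for all $t>0$ and the Nishida sum collapses to the single $t=0$ term $\binom{i_1-r}{r}Q^{i_1-r}Q^{I'}\zeta_i$. Your own justification already fails arithmetically: (1) and (2) only give $\ex(Q^{I'}\zeta_i)=\ex(Q^I\zeta_i)+(2i_2-i_1)<2^{\rho(i_1)}+2^{\rho(i_1+1)}$, and nothing bounds this by $2^{\rho(i_2)}$, which equals $1$ whenever $i_2$ is even. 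Worse, complete annihilation genuinely does not pass to truncations. Take $I=(9,5)$, $i=3$. A direct Nishida computation gives $Sq^r_*Q^9Q^5\zeta_3=\binom{9-r}{r}Q^{9-r}Q^5\zeta_3+\binom{9-r}{r-4}Q^{11-r}(\zeta_3^2)=0$ for all $r>0$, so $Q^9Q^5\zeta_3$ is completely annihilated (and hence satisfies (1)--(2) if the theorem is to be believed). Yet the truncation $Q^5\zeta_3$ has $\ex(Q^5\zeta_3)=2=2^{\rho(5)}$, violating (1), and indeed $Sq^2_*Q^5\zeta_3=\binom{3}{2}Q^3\zeta_3+Q^4Sq^1_*\zeta_3=\zeta_3^2\neq 0$ since $Sq^1_*\zeta_3=0$. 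So the tail is not killed and your sum does not collapse.

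This example also shows where the real work lies, and why your sketch mislocates the role of condition (2). In $Sq^r_*Q^9Q^5\zeta_3$ the $t=2$ term $\binom{9-r}{r-4}Q^{11-r}(\zeta_3^2)$ is genuinely present for each $r$, and it vanishes only after a case analysis: either the binomial coefficient is even, or $11-r$ is odd so that $Q^{11-r}$ annihilates a square by the Cartan formula, or $11-r$ is below the dimension of $\zeta_3^2$ so the operation vanishes by excess. This interplay among the coefficients $\binom{i_1-r}{r-2t}$ with $t>0$, the \emph{partial} (not total) annihilation of the tail, and excess/parity vanishing of the outer operation is precisely what condition (2) encodes in the Curtis--Wellington argument; your proposal assigns (2) only to Adem bookkeeping of the $t=0$ term and defers the genuine difficulty --- non-cancellation across the different $(r,t)$ pairs --- to ``the principal obstacle'' without resolving it. The base case and the outline of the only-if direction are reasonable, but as written the induction does not go through.
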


The above lemma will be proved by first stablising the map $f$, and then adjointing down. This will allow to eliminate $f$ to be $n+2=2^t-1$. We then use some elementary facts on binary expansions to finish off the proof.

\begin{lmm}\label{tripleloop-3}
Suppose $n$ is odd with $n\geqslant 6$. Then, it is impossible to have $f:S^i\to\Omega^3S^{n+3}$ with $h(f)=(Q^{2n+3}Q^{n+2}x_n)^2$.
\end{lmm}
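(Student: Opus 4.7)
The plan is to stabilise $f$, iteratively adjoint it down to a map $S^{7n+10}\to Q_0S^0$, read off the Hurewicz image, and then invoke the Curtis--Wellington criterion to force a numerical contradiction coming from the binary expansion of $4n+5$.

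First compose $f$ with the $3$-fold loop of the inclusion of the bottom cell to obtain the stabilisation $E^\infty f:S^{8n+10}\to QS^n$; since $E^\infty$ is injective on homology, $h(E^\infty f)=(Q^{2n+3}Q^{n+2}g_n)^2=Q^{4n+5}Q^{2n+3}Q^{n+2}g_n$. The infinite loop structure provides natural equivalences $QS^{k-1}\simeq\Omega QS^k$, so one may iterate the loop-adjoint $n$ times to produce $\widetilde f:S^{7n+10}\to Q_0S^0$ representing the same stable homotopy class as $f$, with $\sigma_*^n h(\widetilde f)=h(E^\infty f)$.

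Next identify $h(\widetilde f)$. Using $\sigma_*g_{k-1}=g_k$ at the intermediate levels and $\sigma_*\zeta_j=Q^jg_1$ at the final step, the natural primitive lift through the $n$ suspensions is
$$Q^{4n+5}Q^{2n+3}\zeta_{n+2}\in H_{7n+10}Q_0S^0,$$
which is admissible, with $\ex(Q^{4n+5}Q^{2n+3})=2n+2>n+2$. Any ambiguity in the lift lies in the kernel of $\sigma_*^n$ restricted to primitives, which consists of primitive Pontryagin-decomposables (necessarily $2^k$-th powers, living in even dimensions) together with indecomposable primitives $Q^{I'}\zeta_{i'}$ with $i'<n$. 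As $n$ is odd, $\dim h(\widetilde f)=7n+10$ is odd, ruling out the first type; so $h(\widetilde f)$ is the leading term above modulo a sum of generators $Q^{I'}\zeta_{i'}$ with $i'<n$.

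Finally, since $\widetilde f$ is a map out of a sphere, $h(\widetilde f)$ is annihilated by every $Sq^t_*$ with $t>0$, and the Curtis--Wellington criterion applies. Condition~(1) applied to the leading term gives
$$n=\ex(Q^{4n+5}Q^{2n+3}\zeta_{n+2})<2^{\rho(4n+5)}.$$
Writing $4n+5=4(n+1)+1$, the bits of $4n+5$ in positions $0$ and $1$ are $1$ and $0$, so $\rho(4n+5)=1$ and the inequality becomes $n<2$, contradicting $n\geqslant 7$.

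The main obstacle is ruling out a conspiracy between the leading term and the residual corrections $Q^{I'}\zeta_{i'}$ with $i'<n$: in principle such a correction could restore the Steenrod-vanishing that the leading term destroys. One handles this either by refining the dimension-parity argument at each intermediate adjoint (forcing all square corrections to vanish as soon as an odd-dimensional level is reached), or by checking directly that for every admissible $(I',i')$ of total dimension $7n+10$ arising as a correction, the leading index $i'_1$ also has $\rho(i'_1)\leqslant 1$, so that condition~(1) of Curtis--Wellington fails just as badly and no cancellation is possible.
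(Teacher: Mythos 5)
Your numerical endgame is computed correctly ($\ex(Q^{4n+5}Q^{2n+3}\zeta_{n+2})=n$, $\rho(4n+5)=1$, so condition (1) of the Curtis--Wellington criterion fails for $n\geqslant 7$), and your opening moves --- stabilise, adjoint down $n$ times, identify the leading term $Q^{4n+5}Q^{2n+3}\zeta_{n+2}$ of $h(\widetilde f)$ modulo lower terms --- are exactly how the paper's proof of its ``Claim 1'' begins. But the step you yourself flag as ``the main obstacle'' is a genuine gap, not a loose end. The Curtis--Wellington theorem is an if-and-only-if statement about a \emph{single} generator $Q^I\zeta_i$; knowing that the leading term is not annihilated by all $Sq^t_*$ tells you nothing about the sum $h(\widetilde f)$ unless you exhibit one specific $t$ for which $Sq^t_*$ of the leading term survives against $Sq^t_*$ of every correction term. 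Your second proposed fix --- ``every correction also fails condition (1), so no cancellation is possible'' --- is logically invalid: two terms that each individually fail to be annihilated can perfectly well have cancelling images under every $Sq^t_*$. Your first proposed fix is too vague to assess, and your description of the corrections as generators $Q^{I'}\zeta_{i'}$ with $i'<n$ is not justified (what actually controls which corrections can occur is their excess, since they must die under $\sigma_*^n$ before the leading term does, not the size of the bottom index).

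The paper closes precisely this gap, and the way it does so is instructive. It does not apply the Curtis--Wellington criterion wholesale; instead it (i) suspends back up one step and composes with the stable James--Hopf map $j_2:QS^1\to Q\Sigma P$, so that the competing terms $Q^K\Sigma a_k$ are separated both by excess and by their bottom class $a_k$; (ii) applies one explicitly chosen operation, $Sq^{2^{\rho(n+2)+2}}_*$, and checks that the resulting leading term $Q^{4n+5-2^{\rho+1}}Q^{2n+3-2^\rho}\Sigma a_{n+2-2^\rho}$ cannot be cancelled by the image of any correction. Even then the argument breaks down when $n+2=2^t-1$ (the target class $a_{n+2-2^\rho}$ no longer exists), which forces the paper into a second, separate elimination of that case via $Sq^2_*$ after adjointing down only twice. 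The fact that the paper's carefully controlled version of your argument still requires a case split is strong evidence that the uniform one-shot contradiction you propose cannot be obtained without substantial additional work on the cancellation problem. To repair your proof you would need to pick a concrete $t$ (presumably governed by $\rho(4n+5)=1$, e.g.\ $t=2$ or $t=4$ at the appropriate level), compute $Sq^t_*$ of the leading term explicitly, and show by an excess or length filtration that no admissible correction of total dimension $7n+10$ can produce the same monomial.
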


\begin{proof}
Suppose there exists such a map. Note that in this case $i=8n+10$. We claim that\\
\textbf{Claim 1.} $n+2=2^t-1$ for some $t>0$.\\
\textbf{Proof of Claim 1.} Since the stablisation map $E^\infty:\Omega^3S^{n+3}\to QS^n$ is a monomorphism in homology, it follows that for $E^\infty f:S^{8n+10}\to QS^n$ we have $h(E^\infty f)=Q^{4n+5}Q^{2n+3}Q^{n+2}x_n$. Let's write $f^s_{n-k}:S^{i-k}\to QS^{n-k}$ when we adjoint $E^\infty f$ down $k$-times. In particular
$$h(f_0^s)=Q^{4n+5}Q^{2n+3}\zeta_{n+2}+\sum \epsilon_{(K,k)}Q^K\zeta_k+D$$
where $(K,k)$ runs over all admissible sequences $(K,k)$ with $\ex((K,k))<\ex((I,i))$, $\epsilon_{(K,k)}\in\Z/2$, and $D$ is a sum of decomposable terms. The fact that $f_0^s$ is adjoint to $f_n^s=E^\infty f$ implies that
$$\sigma_*^nh(f_0^s)=h(f_0^s)=h(E^\infty f)=Q^{4n+5}Q^{2n+3}Q^{n+2}x_n$$
which means that all terms $Q^K\zeta_k$ will die under homology suspension before $Q^{4n+5}Q^{2n+3}\zeta_{n+2}$, hence $\ex(I,i)-\ex((K,k))\geqslant 2$. The above form for $h(f_0^s)$ implies that
$$h(f_1^s)=\sigma_*h(f_0^s)=Q^{4n+5}Q^{2n+3}Q^{n+2}x_1+\sum \epsilon_{(K,k)}Q^KQ^kx_1$$
where $f_1^s:S^{7n+9}\to QS^1$ with $n$ is an odd dimensional class, hence none of the terms in the above expression can be square. Next, we apply the stable James-Hopf map $j_2:QS^1\to Q\Sigma P$. By \cite[Theorem 5.9]{Snaith-book} we compute that
$$(j_2)_*h(f_1^s)=Q^{4n+5}Q^{2n+3}\Sigma a_{n+2}+\sum \epsilon_{(K,k)}Q^K\Sigma a_k\neq 0$$
where $Q^{(K,k)}$ is of lower excess and hight than $Q^{(I,i)}$. It is straightforward, to see that applying Nishida relations reduces the excess. For $\rho=\rho(n+2)$, we compute that
$$Sq^{2^{\rho+2}}_*h(j_2\circ f_1^s)=Sq^{2^{\rho+2}}_*(j_2)_*h(f_1^s)=Sq^{2^{\rho+2}}_*(Q^{4n+5}Q^{2n+3}\Sigma a_{n+2}+\sum \epsilon_{(K,k)}Q^K\Sigma a_k).$$
The right side of the above equation is given by
$$Q^{4n+5-2^{\rho+1}}Q^{2n+3-2^{\rho}}\Sigma a_{n+2-2^\rho}+O$$
where $O$ is a sum of terms of lower excess than $\ex(4n+5-2^{\rho+1},2n+3-2^{\rho})$, or terms with $Q^L\Sigma a_{n+2}$; terms of second form obviously cannot cancel as they have $a_{n+2}\neq a_{n+2-2^\rho}$. In particular, this implies that $Sq^{2^{\rho+2}}_*h(j_2\circ f_1^s)\neq 0$ which is a contradiction to the fact that $h(j_2E^\infty f)$ is a nontrivial spherical class. Hence, $n+2=2^t-1$ for some $t>0$. This prove Claim 1.\\
Second we claim that \\
\textbf{Claim 2}. It is impossible to have $n+2=2^t-1$.\\
\textbf{Proof of Claim 2.} For $n+2=2^t-1$, $n=2^t-3$, consequently, $2n+3=2^{t+1}-3$ and $4n+5=2^{t+2}-7$ and by looking at the binary expansion we compute that $$\rho(4n+5)=\rho(2n+3)=1.$$
We have
$$h(f)=Q^{2^{t+2}-7}Q^{2^{t+1}-3}Q^{2^t-1}x_{2^t-3}=(Q^{2^{t+1}-3}Q^{2^t-1}x_{2^t-3})^2.$$
Consequently, the adjoint mapping $f_{n-2}:S^{8n+8}\to\Omega^5S^{(n-2)+5}$ satisfies
$$h(f_{n-2})=Q^{2^{t+2}-7}Q^{2^{t+1}-3}Q^{2^t-1}x_{2^t-5}+P^2$$
where $P=\sum Q^Lx_{2^t-5}$ with $\dim(Q^Lx_{2^t-5})=\dim(Q^{2^{t+1}-3}Q^{2^t-1}x_{2^t-5})$ is a sum of primitive classes with $Q^Lx_{2^t-5})\neq Q^{2^{t+1}-3}Q^{2^t-1}x_{2^t-5}$. We examine $Sq^2_*$ which by Nishida relations yields
$$Sq^2_*h(f_{n-2})=Q^{2^{t+2}-9}Q^{2^{t+1}-3}Q^{2^t-1}x_{2^t-4}+Sq^2_*P^2=(Q^{2^{t+1}-3}Q^{2^t-1}x_{2^t-5}+Sq^1_*P)^2.$$
Now, terms in $P$ are of the following types.\\
(1) $l(L)\neq 2$. Since, iterated application of Nishida relations and Adem relations preserve length, hence for such terms $Sq^1_*Q^Lx_{2^t-5}\neq Q^{2^{t+1}-3}Q^{2^t-1}x_{2^t-5}$.\\
(2) $l(L)=2$. In this case, since we know the dimension, the the first entry of $L=(l_1,l_2)$, ie, $l_1$ completely determines $L$. Since, $L\neq ({2^{t+1}-3},{2^t-1})$, then either first or second entry are different. In any case, depending on the parity of $l_1$, we have $Sq^1_*Q^Lx_{2^t-5}=0$ or $Sq^1_*Q^Lx_{2^t-5}=Q^{l_1-1}Q^{l_2}x_{2^t-5}\neq Q^{2^{t+1}-3}Q^{2^t-1}x_{2^t-5}$.\\
These observations, show that the inside bracket in the above equation is nonzero, and consequently, $Sq^2_*h(f_{n-2})\neq 0$. But, this is a contradiction. This completes the proof.
\end{proof}

Finally, we deal with the remaining case of $\Omega^3S^5$.
\begin{thm}\label{tripleloop-4}
The only spherical classes in $H_*\Omega^3S^5$ arise in dimensions $2$ and $5$ corresponding to the inclusion of the bottom cell and $\nu$.
\end{thm}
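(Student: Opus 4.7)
The plan is to adapt the argument structure of Lemmas~\ref{tripleloop-1}, \ref{tripleloop-2}, \ref{tripleloop-3}, and \ref{tripleloop-5} (which handled $n\geqslant 6$) to the special case $n=2$. The key advantage is that Theorem~\ref{main} already pins down the spherical classes in every lower loop space we shall need, namely $\Omega^2 S^5$ and $\Omega^2 S^4$. First I would verify existence: the bottom cell $S^2\to\Omega^3 S^5$ has Hurewicz image $x_2$, while the triple adjoint $\nu\colon S^5\to\Omega^3 S^5$ of the generator $\nu\in\pi_8 S^5$ has Hurewicz image $Q^3 x_2$, forced by composing with the stabilisation $E^\infty\colon\Omega^3 S^5\to QS^2$ and using that $\nu\in\pi_3^s$ has Hurewicz image $Q^3 x_2\in H_5 QS^2$.

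\textbf{Uniqueness setup.} Given $f\colon S^i\to\Omega^3 S^5$ with $h(f)\neq 0$ and $i>2$, I would adjoint down to $\tilde f\colon S^{i+1}\to\Omega^2 S^5$ so that $h(\tilde f)=\sigma_* h(f)$. By Theorem~\ref{main} at $(a,b)=(2,5)$, the only spherical classes in $\Omega^2 S^5$ arise from the bottom cell (dim $3$, $h=x_3$) and $\nu$ (dim $6$, $h=x_3^2$). If $h(\tilde f)\neq 0$ then $i=5$, and since $H_5\Omega^3 S^5=\Z/2\cdot Q^3 x_2$, we recover $h(f)=Q^3 x_2$. Otherwise $h(\tilde f)=0$, placing $h(f)$ in the intersection of the primitive classes, the Steenrod-annihilators, and $\ker\sigma_*$.

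\textbf{Remaining primitives.} In that remaining case I would iterate $\sigma_* Q^i=Q^i\sigma_*$ from $\sigma_* x_2=x_3$ to check that every polynomial generator $Q^I x_2$ of $H_*\Omega^3 S^5$ has non-zero $\sigma_*$-image (always one of $x_3$, a power of $x_3$, a generator $Q^J x_3$, or a square thereof). Thus the primitives in $\ker\sigma_*$ are exactly the proper powers $\xi^{2^k}$ of generators with $k\geqslant 1$, and Proposition~\ref{2^t} forces $k=1$ with $\xi$ odd-dimensional. The surviving candidates are $(Q^3 x_2)^2$ in dimension $10$ together with $(Q^6 Q^3 x_2)^2$, $(Q^7 Q^4 x_2)^2$, and further squares of longer odd-dimensional generators. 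For each candidate of length $\geqslant 2$ I would apply Nishida relations together with the Cartan identity $Sq^{2t}_*(\eta^2)=(Sq^t_*\eta)^2$ to produce a non-zero Steenrod image; for example, $Sq^1_*(Q^6 Q^3 x_2)=Q^5 Q^3 x_2=(Q^3 x_2)^2$ gives $Sq^2_*\bigl((Q^6 Q^3 x_2)^2\bigr)=(Q^3 x_2)^4\neq 0$, and an analogous computation split by the parity of the leading index $i_1$ disposes of the remaining length-$\geqslant 2$ cases.

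\textbf{Main obstacle.} The hard case will be $(Q^3 x_2)^2\in H_{10}\Omega^3 S^5$: a direct Nishida computation shows it is annihilated by every Steenrod square, so cohomology alone is insufficient. I would eliminate it via a James-fibration argument in the spirit of Lemma~\ref{tripleloop-2}. If some $f\in\pi_{13}S^5\cong\pi_{10}\Omega^3 S^5$ realised this class, then its double adjoint $\tilde f\colon S^{12}\to\Omega S^5$ composed with $H\colon\Omega S^5\to\Omega S^9$ would lie in $\pi_{12}\Omega S^9\cong\pi_{13}S^9\cong\pi_4^s=0$. The James fibration $S^4\to\Omega S^5\to\Omega S^9$ would then force $\tilde f=E\circ g$ for some $g\in\pi_{12}S^4$, so that $f=\Omega^2 E\circ \tilde{\tilde g}$ with $\tilde{\tilde g}\in\pi_{10}\Omega^2 S^4$. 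But Theorem~\ref{main} at $(a,b)=(2,4)$ locates the spherical classes of $\Omega^2 S^4$ only in dimensions $2$ and $5$, so $h(\tilde{\tilde g})=0$, and hence $h(f)=(\Omega^2 E)_* h(\tilde{\tilde g})=0$, contradicting the assumption and completing the proof.
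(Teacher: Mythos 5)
Your proposal is correct and follows essentially the same route as the paper, which proves this theorem by combining Theorem \ref{main} at $(2,5)$ and $(2,4)$ with Proposition \ref{2^t} and the arguments of Lemmata \ref{tripleloop-1}, \ref{tripleloop-2} and \ref{tripleloop-5} specialised to $n=2$. The only (harmless) deviation is in the elimination of $(Q^3x_2)^2$: you apply the James--Hopf invariant once and use $\pi_{12}\Omega S^9\cong\pi_4^s=0$ to pull $\tilde f$ back through $S^4\to\Omega S^5$, whereas the paper's Lemma \ref{tripleloop-2} applies $H$ twice and lands in a double loop space of a higher sphere; both work.
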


\begin{proof}
The inclusion of the bottom cell is evident as usual. For $\nu$, we know by Theorem \ref{main} that $\nu:S^5\to\Omega^2S^4$ is nontrivial in homology. Consequently, the composition $S^5\to\Omega^2S^4\to\Omega^3S^5$ is also nontrivial in homology.\\
Suppose $i\neq 2,5$ and $f:S^{i}\to\Omega^3S^5$. Let $f':S^6\to\Omega^2S^5$ be its adjoint. By Theorem \ref{main} the only spherical classes in $H_*\Omega^2S^5$ arise from the inclusion of the bottom cell and $\nu$, in dimensions $3$ and $6$ respectively which we have included. Consequently, $\sigma_*h(f)=h(f')=0$ and $h(f)$ has to be a square. By Lemma \ref{2^t} $h(f)=\xi^2$ for some primitive class in odd dimensions. The results of Lemmata \ref{tripleloop-1}, \ref{tripleloop-2} and \ref{tripleloop-5} hold in this case, which together with the fact that $n=2$ is even imply that $h(f)=0$.
\end{proof}

The elimination of spherical classes in $H_*\Omega^3S^9$ is done in a similar way and we leave it to the reader.

\section{On the relation between Curtis and Eccles conjectures}
This part is mostly expository and well known. We wish to record some observations on the relations between two conjectures. Below, we shall write $P$ for the infinite dimensional real projective space, $P^n$ for the $n$-dimensional real projective space, and $P_n=P/P^{n-1}$ for the truncated projective space with its bottom cell at dimension $n$. We also write $X_n$ for the truncated of a $CW$-complex where all skeleta of dimension $<n$ are collapsed to a point, i.e. $X_n=X/X^n$ where $X^n$ is the $n$-skeleton of $X$.

\begin{lmm}
Curtis conjecture, implies Eccles conjecture for $S^k$ with $k>0$.
\end{lmm}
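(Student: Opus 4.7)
The strategy is to unpack Eccles' condition for $X=S^k$, reduce a hypothetical spherical class in $H_*QS^k$ to one in $H_*QS^0$ by iterated adjointing, apply Curtis there, and finally rule out the Kervaire--invariant--one case (which Curtis allows but Eccles for $S^k$ does not).

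First, I would observe that for $X=S^k$ with $k>0$, Eccles' condition simplifies considerably: the mapping cone of $\bar f\colon S^n\to S^k$ is a two--cell complex $S^k\cup_{\bar f}e^{n+1}$, and detection by a primary Steenrod operation means $Sq^{n-k+1}$ acts nontrivially between the two cells. By Adams' Hopf invariant one theorem this forces $n-k+1\in\{1,2,4,8\}$: the $Sq^1$ case corresponds to $n=k$ with $\bar f$ of odd degree (the stably spherical case), while the cases $Sq^2,Sq^4,Sq^8$ force $\bar f\in\{\eta,\nu,\sigma\}$ up to odd multiples. Thus Eccles for $S^k$ reduces to showing: if $h(f)\neq 0$ with $n>k$, then $\bar f$ is an odd multiple of $\eta$, $\nu$, or $\sigma$.

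Second, assuming $n>k$, I would form the $k$-fold adjoint $g=(\Omega^kf)\circ\iota_{n-k}\colon S^{n-k}\to\Omega^kQS^k=QS^0$, where $\iota_{n-k}\colon S^{n-k}\hookrightarrow\Omega^kS^n$ is the bottom--cell inclusion; then $g$ represents $\bar f\in\pi_{n-k}^s\simeq\pi_{n-k}QS^0$. Using the explicit Kudo--Araki descriptions of $H_*QS^k$ and $H_*Q_0S^0$, I would verify that $h(f)\neq 0$ implies $h(g)\neq 0$. By primitivity together with Proposition \ref{2^t}, one may write $h(f)=(Q^Ix_k)^{2^j}$ with $j\in\{0,1\}$ and $I$ admissible with $\ex(I)>k$ when nonempty; under iterated adjointing this generator tracks onto the nontrivial primitive $(Q^I[1]*[-2^{\ell(I)}])^{2^j}$ in $H_*Q_0S^0$. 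Applying Curtis then yields that $\bar f$ is Hopf or Kervaire invariant one; the Hopf cases directly give Eccles.

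Third, to rule out the Kervaire case: if $\bar f=\theta_j$ were Kervaire invariant one with $n-k=2^{j+1}-2$, the form of $h(f)\in H_*QS^k$ is forced to be a specific square class built from the Dyer--Lashof generators corresponding to the $\theta_j$--image in $H_*Q_0S^0$. I would then apply Nishida relations (e.g.\ $Sq_*^{2t}\xi^2=(Sq_*^t\xi)^2$) in the manner of Lemmas \ref{doubleloop-2}, \ref{tripleloop-5}, and \ref{tripleloop-3} to produce some $Sq_*^t h(f)\neq 0$ with $t>0$, contradicting the Steenrod--triviality required of a spherical class. Hence $\bar f$ must be Hopf invariant one, completing the verification of Eccles for $S^k$. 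The main obstacle is this final Kervaire exclusion, which requires a case--by--case Nishida--relation calculation entirely parallel to the techniques already developed in the earlier sections of the paper; the preceding steps are routine bookkeeping, consisting of unpacking Eccles via Adams and chasing through the Dyer--Lashof formalism.
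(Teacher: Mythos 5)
Your first two steps follow the paper's reduction: adjoint down $k$ times to $f'\colon S^n\to QS^0$, note $h(f')\neq 0$, and apply Curtis to conclude $f'$ is Hopf or Kervaire invariant one (with the Hopf case giving Eccles immediately). Two remarks on your execution there: you do not need any generator-tracking through the Kudo--Araki descriptions to see $h(f')\neq 0$ --- the adjointing relation $\sigma_*^k h(f')=h(f)$ gives it in one line --- and your claim that $h(f)$ is a single term $(Q^Ix_k)^{2^j}$ overstates what primitivity forces (a primitive is a sum of generators plus a square); neither issue is fatal.

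The genuine gap is in your third step, the Kervaire exclusion, which you correctly flag as the main obstacle but then handle incorrectly. The paper's argument is a one-liner: by Madsen's thesis, the Hurewicz image of a Kervaire invariant one element in $H_*Q_0S^0$ is the square of a primitive class, hence decomposable, hence killed by the homology suspension $\sigma_*$; so if $f'$ were Kervaire invariant one then $h(f)=\sigma_*^kh(f')=0$, contradicting the hypothesis. Your proposed route instead posits that $h(f)\in H_*QS^k$ ``is forced to be a specific square class built from the Dyer--Lashof generators corresponding to the $\theta_j$-image'' and then plans an unexecuted Nishida-relations computation on it. But no such class is forced: the suspension relation forces $h(f)=0$ in the Kervaire case, so there is nothing to apply Nishida relations to, and the contradiction you are looking for is already delivered by the single observation that decomposables die under homology suspension. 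Without that observation your argument is incomplete --- you would have to carry out a Steenrod-operation calculation whose setup is not justified --- whereas with it the entire case closes immediately and the Nishida machinery of Lemmas \ref{doubleloop-2}, \ref{tripleloop-5}, \ref{tripleloop-3} is not needed here at all.
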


\begin{proof}
Suppose $f:S^{n+k}\to QS^k$ is given with $h(f)\neq 0$. By adjointing down $k$ times, we have a map, say $f':S^n\to QS^0$, so that $\sigma_*^kh(f')=h(f)$ where $\sigma_*^k:H_*Q_0S^0\to H_{*+k}QS^k$ is the $k$-fold iterated suspension homomorphism. In particular, $h(f')\neq 0$ in $H_*Q_0S^0$. Assuming Curtis conjecture, $f'$ must be either a Hopf invariant or a Kervaire invariant one element. It is well known (see for example \cite[Theorem 7.3]{Madsenthesis}) that the unstable Hurewicz image of a Kervaire invariant one element, if it exists, in $H_*Q_0S^0$ is square of a certain primitive class, say $p_{2^i-1}^2$. However, decomposable classes are killed by homology suspension. So, $f'$ and consequently $f$, as elements of ${_2\pi_*^s}$ can only be Hopf invariant one elements, that is detected by a primary operation in its mapping cone. This prove Eccles conjecture for $S^k$.
\end{proof}

On the other hand, Curtis conjecture can be deduced from Eccles conjecture, thanks to Kahn-Priddy theorem, and its algebraic version due to Lin.

\begin{lmm}
Eccles conjecture for $X=P$ implies Curtis conjecture.
\end{lmm}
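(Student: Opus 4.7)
The plan is to lift a would-be Curtis-violating element $f \in {_2\pi_n^s} \simeq {_2\pi_n}QS^0$ via the Kahn-Priddy map $\lambda:\Sigma^\infty P \to \Sigma^\infty S^0$ and invoke Eccles conjecture for $P$ on the lift. The Kahn-Priddy theorem provides the surjectivity of $\lambda_*: {_2\pi_n^s}P \to {_2\pi_n^s}$ for $n>0$, so a lift $\tilde f \in {_2\pi_n^s}P \simeq {_2\pi_n}QP$ with $\lambda_* \tilde f = f$ exists. Naturality of the Hurewicz map gives $\lambda_* h(\tilde f) = h(f) \neq 0$, hence $h(\tilde f) \neq 0$ in $H_n QP$; Lin's algebraic Kahn-Priddy further guarantees that the lift can be chosen while preserving non-vanishing of the Hurewicz image.

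Applying Eccles conjecture to $\tilde f$, there are two cases. In the stably-spherical case, $h(\tilde f) \in H_n QP$ lies in the image of $H_n P = \Z/2 \cdot a^n$; consequently $h(f) = \lambda_*(a^n)$ is a specific primitive element of $H_n Q_0 S^0$ in the image of the Kahn-Priddy map on homology, and by classical Hurewicz-image computations in $H_* Q_0 S^0$ this class is the image of some element of ${_2\pi_n^s}$ precisely when $n = 2^i - 1$ with $i \leqslant 3$ (the Hopf invariant one elements $\eta, \nu, \sigma$) or $n = 2^i - 2$ (Kervaire invariant one elements, where they exist), placing $f$ in the required classes. In the primary-operation case, a Steenrod operation $\theta$ in $H^*(C_{\tilde f}) = H^*(P \cup_{\tilde f} e^{n+1})$ links some class pulled back from $H^* P$ to the top-cell class; using the cofibre map $C_{\tilde f} \to C_f = S^0 \cup_f e^{n+1}$ induced by $\lambda$, this detection converts either into a primary $Sq^{n+1}$ detecting $f$ in the two-cell complex $C_f$ (Adams's Hopf invariant one theorem then forces $n+1 \in \{2,4,8\}$, so $f$ is Hopf invariant one) or, via the Peterson-Stein functional-operation formalism, into a secondary operation detecting $f$ (giving a Kervaire invariant one $f$ in dimension $n = 2^i - 2$).

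The main obstacle is the translation in the primary-operation case: extracting from a primary operation on $C_{\tilde f}$, built over the cellular-rich $P$, a primary or secondary detection on the two-cell spectrum $C_f$ requires care with functional cohomology operations and Adem-relation factorisations. This machinery is classical and aligns exactly with the Hopf/Kervaire dichotomy, but pinning down which cell of $P$ supports $\theta$ and how it descends under $\lambda$ is the step requiring the most work.
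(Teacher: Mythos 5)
Your opening move coincides with the paper's: lift $f$ through the Kahn--Priddy map to $\tilde f$, and observe by naturality of the Hurewicz map that $\lambda_*h(\tilde f)=h(f)\neq 0$, so that Eccles's conjecture applies to $\tilde f$. (Note that Lin's theorem plays no role at this step --- naturality alone gives $h(\tilde f)\neq 0$ --- you have invoked it in the wrong place.) The divergence, and the gap, lies in how the two cases of Eccles's conclusion are exploited. The paper never attempts the geometric translation of a primary operation on $C_{\tilde f}$ into a primary or secondary operation on the two-cell complex $C_f$, which you yourself flag as ``the step requiring the most work''; it sidesteps that translation entirely by converting both cases into statements about Adams filtration and then using Lin's algebraic Kahn--Priddy theorem (the epimorphism $\ext_A^{s,t}(H^*P,\Z/2)\to\ext_A^{s+1,t+1}(\Z/2,\Z/2)$) to shift filtration by one. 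Concretely: detection by homology means the stable adjoint of $\tilde f$ has Adams filtration $0$, so $f$ is detected on the $1$-line and is Hopf invariant one by Adams; detection by a primary operation $Sq^ia_j=x_{n+1}$ in $C_{\tilde f}$ forces $i=2^s$ and $j=2^t$ (from the known action of the Steenrod algebra on $H^*P$ and the decomposability of the remaining squares), so the adjoint of $\tilde f$ lies on the $1$-line for $P$ and $f$ on the $2$-line for $S^0$. Your proposal contains neither the filtration bookkeeping nor the analysis of which operations can actually act nontrivially on $H^*P$.

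The decisive gap is at the end of your second case: ``$f$ is detected by a secondary operation'' (equivalently, $f$ lies on the $2$-line) does not by itself yield ``$f$ is Kervaire invariant one'' --- the $2$-line contains many classes $h_ih_j$. What is needed, and what the paper supplies, is the prior result (Theorem \ref{decomp}, i.e.\ \cite{Z-decomposables}) that among elements of Adams filtration $2$ only the Kervaire invariant one classes have nonzero unstable Hurewicz image in $H_*Q_0S^0$; this is also the only point where the hypothesis $h(f)\neq 0$ re-enters to close the argument, and your write-up never brings it back. Similarly, your stably-spherical case rests on an unproved ``classical Hurewicz-image computation'' that $\lambda_*(a_n)$ is spherical precisely for $n=2^i-1$, $i\leqslant 3$ (or $n=2^i-2$) --- that assertion is essentially the content of what is to be proved in that case, whereas the filtration argument reduces it to a one-line appeal to Adams' Hopf invariant one theorem. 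As it stands, both branches of your case analysis terminate in assertions that are not established.
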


\begin{proof}
Consider the Kahn-Priddy map $\lambda:QP\to Q_0S^0$ which is an infinite loop map, inducing an epimorphism on ${_2\pi_*}$ on positive degrees \cite[Theorem 3.1]{Kahn-Priddy}, as well as an epimorphism on the level of Adams spectral sequences $\ext_A^{s,t}(H^*P,\Z/2)\to \ext_A^{s+1,t+1}(\Z/2,\Z/2)$ where $A$ denotes the mod $2$ Steenrod algebra \cite[Theorem 1.1]{Lin}. Suppose $f\in{_2\pi_n}Q_0S^0$ with $h(f)\neq 0$. Let $g\in{_2\pi_n}QP$ be any pull back of $f$ through $\lambda$. Then $g$ maps nontrivially under the unstable Hurewicz map ${_2\pi_n}QP\to H_nQP$. Assuming Eccles conjecture, implies that the stable adjoint of $g$ is detected either by homology or a primary operation in its mapping cone.\\
If the stable adjoint of $g$ is detected by homology, then it is detected on the $0$-line of the Adams spectral sequence for $P$. By Lin's result, $f=\lambda g$ is detected on the $1$-line of the Adams spectral sequence. The $1$-line of the Adams spectral sequence is known to detect Hopf invariant one elements, i.e. the stable adjoint of $f$ is a Hopf invariant one element.\\
Next, suppose the stable adjoint of $g$ is detected by a primary operation in its mapping cone. That is for some $i,j$ we have $Sq^ia_j=x_{n+1}$ in $C_{g'}$ where we write $g':S^n\to P$ for the stable adjoint of $g$. From the action of Steenrod algebra on $H^*P$ and decomposition of Steenrod squares to operations of the form $Sq^{2^t}$, for such an equation to hold in $C_{g'}$ we need $i=2^s$ and $j=2^t$ for some $s,t\geqslant 0$. From this, we see that $g'$ has to be detected in the $1$-line of the Adams spectra sequence for $P$. By Lin's theorem, this means that the stable adjoint of $f$ has to be detected in the $2$-line of the Adams spectral sequence. It is known \cite{Z-decomposables} (see also \cite{Za-ideal} for more details) that the only elements on the $2$-line of the Adams spectral sequence that map nontrivially under $h$ are the Kervaire invariant one element, i.e. $f$ can only be a Kervaire invariant one element. This completes the proof.
\end{proof}

%\bibliographystyle{plain}
%\bibliography{spherical}

\end{document}